\numberwithin{equation}{section}
\theoremstyle{plain}
\newtheorem{theorem}{\sc Theorem}[section]
\newtheorem{corollary}[theorem]{\sc Corollary}
\newtheorem{definition}[theorem]{\sc Definition}
\newtheorem{lemma}[theorem]{\sc Lemma}
\newtheorem{proposition}[theorem]{\sc Proposition}
\theoremstyle{remark}
\newtheorem{remark}[theorem]{\sc Remark}
\newcommand{\one}{{{\rm 1\mkern-1.5mu}\!{\rm I}}}
\newcommand{\be}{\begin{equation}}
\newcommand{\ee}{\end{equation}}
\def\t{\mathbf{t}}
\begin{document}

\title[The Stochastic Encounter-Mating Model]{The Stochastic Encounter-Mating Model}

\author{Onur G\"un}
\address{Onur G\"un, Weierstrass Institute, Mohrenstrasse 39, 10117 Berlin, Germany.}
\email{Onur.Guen@wias-berlin.de}

\author{Atilla Yilmaz}
\address{Atilla Yilmaz, Department of Mathematics, Ko\c{c} University, Sar\i yer, Istanbul 34450, Turkey.}
\email{atillayilmaz@ku.edu.tr}

\date{Revised on July 31, 2016.}

\subjclass[2010]{92D25, 60J28, 60G55.} 
\keywords{Population dynamics; pair formation; encounter-mating; assortative mating; random mating; panmixia; homogamy; heterogamy; monogamy; mating preferences; mating pattern; contingency table; multiple hypergeometric distribution; simple point process; Poisson process; Bernoulli process.}

\begin{abstract}
	
We propose a new model of permanent monogamous pair formation in zoological populations with multiple types of females and males. According to this model, animals randomly encounter members of the opposite sex at their so-called firing times to form temporary pairs which then become permanent if mating happens. Given the distributions of the firing times and the mating preferences upon encounter, we analyze the contingency table of permanent pair types in three cases: (i) definite mating upon encounter; (ii) Poisson firing times; and (iii) Bernoulli firing times. In the first case, the contingency table has a multiple hypergeometric distribution which implies panmixia. The other two cases generalize the encounter-mating models of Gimelfarb (1988) who gives conditions that he conjectures to be sufficient for panmixia. We formulate adaptations of his conditions and prove that they not only characterize panmixia but also allow us to reduce the model to the first case by changing its underlying parameters. Finally, when there are only two types of females and males, we provide a full characterization of panmixia, homogamy and heterogamy.

\end{abstract}

\maketitle

\section{Introduction}

\subsection{Overview}\label{overview}

In a 1988 paper \cite{Gim88a}, Gimelfarb studies two models of permanent monogamous pair formation in zoological populations comprised of $k\ge2$ types of females and males. In both models, males randomly encounter females to form temporary pairs which then become permanent if mating takes place. In the first model (called individual encounter), only one random single male initiates an encounter at any time, whereas in the second one (called mass encounter), all single males do. The probability of mating upon encounter depends on the types of the female and the male, and is denoted by $p_{ij}$ for type-$ij$ pairs. Gimelfarb is interested in how the mating preference matrix $P := (p_{ij})$ determines the mating pattern, i.e., the contingency table of pair types once all animals mate. He focuses on the concept of panmixia which refers to female and male types being uncorrelated in the expected mating pattern, and proposes a sufficient condition for it in terms of $P$ for each of the two models.

Motivated by Gimelfarb's work, we introduce a new model of permanent monogamous pair formation, called the stochastic encounter-mating (SEM) model, where both females and males initiate encounters at their so-called firing times which are determined by independent point processes whose distributions depend on the sex and the type of the animals. We start by considering the case of definite mating upon encounter (i.e., $p_{ij} = 1$) with general firing time distributions, and prove the following results:
\begin{itemize}
\item [(i)] the mating pattern has a multiple hypergeometric distribution which implies panmixia; 
\item [(ii)] the mating pattern is independent of the firing times.
\end{itemize}
Then, we provide formulas for the distribution and the expectation of the contingency table of pair types at any time.

When the point processes are Poisson with intensity $\alpha_i$ (resp.\ $\beta_j$) for type-$i$ females (resp.\ type-$j$ males), the SEM model is a continuous-time Markov chain and its infinitesimal generator depends on $P = (p_{ij})$, $\alpha_1,\ldots,\alpha_k$ and $\beta_1,\ldots,\beta_k$ only through $\Pi = (\pi_{ij})$ with $\pi_{ij} := p_{ij}(\alpha_i + \beta_j)$. We use $\Pi$ to formulate the so-called Poisson fine balance condition which we show to be necessary for panmixia. Moreover, when this condition is satisfied, the underlying parameters can be changed (if necessary) to yield definite mating upon encounter, and our results for that case carry over. In particular, the Poisson fine balance condition characterizes panmixia. Similarly, when the point processes are Bernoulli with success probability $\alpha_i$ (resp.\ $\beta_j$) for type-$i$ females (resp.\ type-$j$ males), the SEM model is a discrete-time Markov chain and its transition kernel involves $\Pi = (\pi_{ij})$ with $\pi_{ij} := p_{ij}(\alpha_i + \beta_j - \alpha_i\beta_j)$. Analogous to the previous case, we use $\Pi$ to formulate the so-called Bernoulli fine balance condition which characterizes panmixia and allows us to reduce the model to definite mating upon encounter.

Gimelfarb's individual (resp.\ mass) encounter model corresponds to SEM when the point processes are Poisson (resp.\ Bernoulli) with $\alpha_i = 0$ and $\beta_j = 1$, and his sufficient condition for panmixia implies the Poisson (resp.\ Bernoulli) fine balance condition. Thus, in this paper we not only unify and generalize Gimelfarb's models but also rigorously prove stronger versions of his panmixia conjectures. Moreover, for both of the Poisson and Bernoulli cases, when $k=2$, we establish a trichotomy for $\Pi$ that fully characterizes heterogamy/panmixia/homogamy, that is, negative/zero/positive correlation of same type females and males in the expected mating pattern.

The rest of this paper is structured as follows. Section \ref{themodel} gives a precise description of the SEM model. Section \ref{lagalug} defines the key concepts of panmixia, homogamy and heterogamy. Section \ref{literatur} surveys the previous results on similar models (including a detailed account of Gimelfarb's aforementioned work \cite{Gim88a}), and provides biological background, motivation and references. Section \ref{ourresults} summarizes our results which are grouped under the following headings: (i) general firing times; (ii) Poisson firing times; and (iii) Bernoulli firing times. The full statements and the proofs of these results are presented in Sections \ref{generalsection}, \ref{poissonsection} and \ref{bernoullisection}, respectively. Finally, Section \ref{conicsection} is devoted to some concluding remarks, observations and open problems.

\subsection{The SEM model}\label{themodel}

Consider an animal species comprised of $k\ge2$ types of females and males forming permanent monogamous heterosexual pairs. Label the types $1,\ldots,k$. Take a population consisting of $x_i\ge0$ type-$i$ females and $y_j\ge0$ type-$j$ males for every $i,j\in\{1,\ldots,k\}$, such that
\begin{equation}\label{destur}
n := x_1+\cdots +x_k = y_1+\cdots+y_k\ge1,
\end{equation}
i.e., there are an equal number of females and males in the population. Denote the set of females (resp.\ males) by $Z_f = \{\varphi_1,\ldots,\varphi_n\}$ (resp.\ $Z_m = \{\mu_1,\ldots,\mu_n\}$), and the whole population by $Z := Z_f\cup Z_m$.

Associated to each $\zeta\in Z$, there is a simple point process $N(\zeta)$ on the real half-line $[0,\infty)$ with time points $(\tau_s(\zeta))_{s\ge1}$. These point processes are mutually independent and the distribution of $N(\zeta)$ depends on the sex and the type of $\zeta$. For the sake of convenience, we will refer to $(\tau_s(\zeta))_{s\ge1}$ as the firing times of $\zeta$.
At these times, $\zeta$ initiates an encounter (i.e., forms a temporary pair) with a random member of the opposite sex. Whether such an encounter results in mating (i.e., the formation of a permanent pair) is also random, and the conditional probability of this event (upon encounter) depends on the types of the female and the male involved. Because of these two stages of encounter and mating, we will call this model of permanent pair formation the {\it stochastic encounter-mating} (SEM) model.

Let us now give a precise description of the model. We start with imposing certain conditions on the firing times.
\begin{definition}\label{topgaz}
A two-dimensional array $\mathbf{t}:= (t_s(\zeta))_{s\ge1,\zeta\in Z}$ of positive extended real numbers is said to be a proper family of firing times if it satisfies the following conditions:
\begin{itemize}
\item [(a)] $t_s(\zeta)\le t_{s+1}(\zeta)$ and the inequality is strict whenever $t_s(\zeta)<\infty$;
\item [(b)] $t_1(\zeta)<t_2(\zeta)<\cdots<\infty$ for every $\zeta\in Z_f$ or for every $\zeta\in Z_m$.
\end{itemize}
The set of all proper families of firing times is denoted by $\Phi$.
\end{definition}
\noindent Our technical assumptions (see Section \ref{technicalpoints}) on the distributions of the firing times will ensure that $\tau := (\tau_s(\zeta))_{s\ge1,\zeta\in Z}\in\Phi$ almost surely.

For every $t\ge0$, let $S_f(t)$ (resp.\ $S_m(t)$) denote the set of single females (resp.\ males) at that time. We will refer to $S(t) := S_f(t)\cup S_m(t)$ as the singles' pool. Initially, all of the animals are single, i.e., $S(0) = Z$. Given any $\mathbf{t}\in\Phi$, the distinct times at which at least one single animal fires are recursively defined as
\begin{equation}\label{guzel}
t_r^*= t_r^*(\t) := \min\{t_s(\zeta): s\ge1, \zeta\in S(t_{r-1}^*), t_s(\zeta) > t_{r-1}^*\}
\end{equation}
for $r\ge1$, where $t_0^* := 0$ as a convention. These are the times of the so-called firing rounds.
The encounter and mating stages at the $r$th firing round are as follows.

{\it Stage I: Encounter.} If exactly one element of $S(t_{r-1}^*)$, say a female, fires at $t_r^*$, then it samples a male from $S_m(t_{r-1}^*)$ uniformly at random and initiates an encounter with it. If two or more elements of $S(t_{r-1}^*)$ fire at $t_r^*$, then each of them samples a single member of the opposite sex uniformly at random and without replacement. Here is exactly how this is done: The elements of $S(t_{r-1}^*)$ that fire at $t_r^*$ are ordered in an arbitrary way. (For the sake of definiteness, order them with respect to their labels and let the females go first if any. However, as we will see in Remark \ref{bahaii}, this order does not matter.) Assume without loss of generality that the first one is a female $\varphi_a\in S_f(t_{r-1}^*)$ and it samples a male $\mu_b\in S_m(t_{r-1}^*)$. Then, neither $\varphi_a$ nor $\mu_b$ can be sampled by the subsequent elements of $S(t_{r-1}^*)$ that fire at $t_r^*$. Moreover, even if $\mu_b$ fires at $t_r^*$, it is not allowed to sample a female from $S_f(t_{r-1}^*)$ when its turn comes, because it is already in a temporary pair with $\varphi_a$. This procedure continues until all the elements of $S(t_{r-1}^*)$ that fire at $t_r^*$ are in a temporary pair. The collection of these pairs is denoted by $(\Delta\mathcal{L})'(t_r^*)$. This choice of notation will become clear in the next two paragraphs.

{\it Stage II: Mating.} After the encounter stage at the $r$th firing round is completed, independent Bernoulli random variables are assigned to the pairs in $(\Delta\mathcal{L})'(t_r^*)$ to determine whether they become permanent (i.e., if the courtship results in mating). The probability of mating for each such type-$ij$ pair (i.e., if the female and the male are of type-$i$ and type-$j$, respectively) is equal to some $p_{ij}>0$. The $k\times k$ matrix $$P := \left(p_{ij}\right)$$ is called the {\it mating preference matrix} of the species. The collection of permanent pairs formed at the $r$th firing round is denoted by $\Delta\mathcal{L}(t_r^*)\subset(\Delta\mathcal{L})'(t_r^*)$. The singles' pool $S(t_r^*)$ is obtained by removing from $S(t_{r-1}^*)$ the animals in the pairs constituting $\Delta\mathcal{L}(t_r^*)$. Note that the animals in the pairs in $(\Delta\mathcal{L})'(t_r^*)\setminus\Delta\mathcal{L}(t_r^*)$, i.e., the ones that have formed temporary but not permanent pairs at the $r$th firing round, remain in the singles' pool.

This two-stage procedure is iteratively carried out at $t_1^*,t_2^*,\ldots$ and it naturally stops at
$$T := \min\{t>0: S(t) = \emptyset\} = \min\{t_r^*: r\ge1, S(t_r^*) = \emptyset\},$$
i.e., when the singles' pool is depleted. Since $p_{ij}>0$ and every female or every male fires infinitely many times in $[0,\infty)$ by Definition \ref{topgaz}, the terminal time $T$ is almost surely finite. For any $t\in[0,T]$, let
$$\mathcal{L}(t) := \bigcup_{\substack{r\ge1:\\t_r^*\le t}}\Delta\mathcal{L}(t_r^*)$$
be the collection (or unordered list) of permanent pairs formed by time $t$. Similarly, for any $i,j\in\{1,\ldots,k\}$, let $Q_{ij}(t)$ be the number of permanent type-$ij$ pairs formed by time $t$. The set-valued process $\mathcal{L}(\cdot)$ and the $k\times k$ matrix-valued process $Q(\cdot) := \left(Q_{ij}(\cdot)\right)$ are called the pair-list process and the pair-type process, respectively. The terminal value $Q(T)$ of the latter is referred to as the {\it mating pattern} of the population. For the sake of convenience, we set $\mathcal{L}(t) = \mathcal{L}(T)$ and $Q(t) = Q(T)$ for every $t>T$.

Let $\nu$ be the probability measure that the firing times $\tau = (\tau_s(\zeta))_{s\ge1,\zeta\in Z}$ induce on $(\Phi,\mathcal{F})$, where $\mathcal{F}$ is the Borel $\sigma$-algebra corresponding to the product topology. Given any $\mathbf{t}\in\Phi$, let $\mathbb{P}^\mathbf{t}$ be the probability measure that (i) the sequence $\left((\Delta\mathcal{L})'(t_r^*)\right)_{r\ge1}$ of temporary pairs and (ii) the pair-list process $\mathcal{L}(\cdot)$ induce on some appropriate measurable space $(\Omega,\mathcal{B})$ that is common for all $\mathbf{t}\in\Phi$. We prefer not to explicitly define $(\Omega,\mathcal{B})$ since we will never directly refer to it. Finally, define $\mathbb{P}$ as the semi-direct product measure of $\nu$ and $\mathbb{P}^\mathbf{t}$ on $(\Phi\times\Omega,\mathcal{F}\otimes\mathcal{B})$, i.e., $\mathbb{P}(d\mathbf{t},d\omega) := \nu(d\mathbf{t})\mathbb{P}^\mathbf{t}(d\omega)$. 
Note that $\mathbb{P}^\mathbf{t}(\cdot) = \mathbb{P}(\cdot\,|\,\tau = \mathbf{t})$ for $\mathbf{t}\in\mathrm{supp}(\nu)$. We write $\mathbb{E}^\t$ (resp.\ $\mathbb{E}$) to denote expectation under $\mathbb{P}^\t$ (resp.\ $\mathbb{P}$).

The firing time distributions and the mating preference matrix are jointly referred to as
the {\it encounter-mating (EM) law}. In this paper, we will consider the following EM laws:
\begin{itemize}
\item [(Def)] general firing times \& definite mating upon encounter (Section \ref{generalsection});
\item [(Poi)] Poisson firing times \& general mating preferences (Section \ref{poissonsection});
\item [(Ber)] Bernoulli firing times \& general mating preferences (Section \ref{bernoullisection}).
\end{itemize}
As we outline in Section \ref{literatur}, cases (Poi) and (Ber) are generalizations of models that have been previously introduced in the literature, whereas our main purpose in analyzing case (Def) is to unify and clarify the other two.

%
%

\subsection{Panmixia, homogamy and heterogamy}\label{lagalug}

Given the EM law of the species and a population with the number of animals of each sex and type satisfying (\ref{destur}), we would like to analyze the distribution of the pair-type process $Q(\cdot)$. In our analysis, for any $i,j\in\{1,\ldots,k\}$, we will frequently refer to the following quantities:
\begin{align}
u_{ij}(t;x_1,\ldots,x_k;y_1,\ldots,y_k) &:= \mathbb{E}\left[Q_{ij}(t)\right]\quad\mbox{for any $t\ge0$};\label{tozert}\\
u^*_{ij}(x_1,\ldots,x_k;y_1,\ldots,y_k) &:= \mathbb{E}\left[Q_{ij}(T)\right]\quad\mbox{for the terminal time $T$; and}\label{bayankut}\\
u^*_{ij}(x_1,\ldots,x_k;y_1,\ldots,y_k\,|\,\t) &:= \mathbb{E}^{\t}[Q_{ij}(T)]\quad\mbox{for any $\t\in\Phi$.}\label{nboz}
\end{align}
Among these, (\ref{bayankut}) will play a pivotal role.

We start with two observations. First, since all the animals mate by the terminal time $T$, the mating pattern $Q(T)$ is a random $k\times k$ contingency table whose $i$th row sum and $j$th column sum are equal to $x_i$ and $y_j$, respectively. In particular, it has
\begin{equation}\label{degfree}
k\cdot k - (k + k) + 1 = (k-1)^2
\end{equation}
degrees of freedom. Second, if there is definite mating upon encounter, i.e., if $p_{ij} = 1$ for every $i,j\in\{1,\ldots,k\}$, then all of the $n!$ possible terminal pair-lists should be equally likely. (We will prove this later.) In particular, each of the $x_i$ type-$i$ females forms a permanent pair with a type-$j$ male with probability $y_j/n$. This motivates the following definition.

\begin{definition}
A population is said to be panmictic if $$u^*_{ij}(x_1,\ldots,x_k;y_1,\ldots,y_k) = \frac{x_iy_j}{n}$$
for every $i,j\in\{1,\ldots,k\}$, where $x_i\ge0$ (resp.\ $y_j\ge0$) is the number of type-$i$ females (resp.\ type-$j$ males), satisfying (\ref{destur}). The species is said to be panmictic if every population of animals from the species is panmictic.
\end{definition}

\noindent In words, panmixia for a species refers to having (on average) zero correlation between female and male types in permanent pairs in any population.

To complement the concept of panmixia, homogamy (resp.\ heterogamy) is defined as females and males of similar types having positive (resp.\ negative) correlation in $\mathbb{E}\left[Q(T)\right]$. However, in order to make these definitions precise, one has to choose an appropriate metric on the set $\{1,\ldots,k\}$ of types. Also, note that the definitions of panmixia, homogamy and heterogamy are not {\it a priori} collectively exhaustive even for a fixed population. For example, type-$1$ females and males can be positively correlated while type-$2$ females and males are negatively correlated. However, when $k=2$, i.e., there are only two types of females and males, these potential issues are ruled out. Indeed, there is a unique choice of metric on the set $\{1,2\}$, and $Q(T)$ has only one degree of freedom by (\ref{degfree}). In particular, finding $u^*_{11}(x_1,x_2;y_1,y_2)$ is sufficient for determining the other three. Therefore, the following concise definitions make sense.

\begin{definition}\label{homhet}
For $k=2$, a population is said to be
\begin{align*}
\mbox{homogamous if}\quad u^*_{11}(x_1,x_2;y_1,y_2) &> \frac{x_1y_1}{n},\quad\mbox{and}\\
\mbox{heterogamous if}\quad u^*_{11}(x_1,x_2;y_1,y_2) &< \frac{x_1y_1}{n},
\end{align*}
where $x_i \ge 1$ (resp.\ $y_j \ge 1$) is the number of type-$i$ females (resp.\ type-$j$ males), such that $n = x_1 + x_2 = y_1 + y_2$.
The species is said to be homogamous (resp.\ heterogamous) if every population of animals from the species is homogamous (resp.\ heterogamous).
\end{definition}

\begin{remark}
In the definition above, we have not allowed $x_1x_2y_1y_2$ to be $0$ because, in that case, the mating pattern $Q(T)$ is deterministic and we trivially have $$u^*_{11}(x_1,x_2;y_1,y_2) = \frac{x_1y_1}{n}.$$
\end{remark}

\subsection{Previous results and related literature}\label{literatur}

As mentioned in Section \ref{overview}, Gimelfarb \cite{Gim88a} introduces two models of encounter-mating (EM) for  permanent monogamous pair formation. Both of these models are in discrete time, and they differ from each other only in the firing times of the animals. In the first one, called individual EM, exactly one uniformly sampled single male (and no female) fires at each $t\in\mathbb{N} = \{1,2,3,\ldots\}$, whereas in the second one, called mass EM, all single males (and no females) fire at each $t\in\mathbb{N}$. Given the firing times, the encounter and mating stages at each firing round of these models are as described in Section \ref{themodel}. Therefore, they are special cases of our SEM model. Indeed, for individual EM, the pair-type process is the discrete-time process embedded in $Q(\cdot)$ when $\{N(\zeta)\}_{\zeta\in Z}$ are Poisson processes with intensity $0$ (resp.\ $1$) for each $\zeta\in Z_f$ (resp.\ $\zeta\in Z_m$). Similarly, mass EM corresponds to the SEM model when $\{N(\zeta)\}_{\zeta\in Z}$ are Bernoulli processes with success probability $0$ (resp.\ $1$) for each $\zeta\in Z_f$ (resp.\ $\zeta\in Z_m$).

In order to simplify the analysis, Gimelfarb replaces all of the quantities such as $Q(\cdot)$ with their expectations. He says that, because of the law of large numbers (LLN), this is a good approximation when $n$ is large, but he does not rigorously justify this claim. He defines the concepts of panmixia, homogamy and heterogamy, but only in an asymptotic sense as $n\to\infty$. He then asserts that the species is (asymptotically) panmictic whenever
\begin{align}
p_{ij} &= \bar\alpha_i + \bar\beta_j\quad\mbox{for some $\bar\alpha_1,\ldots,\bar\alpha_k,\bar\beta_1,\ldots,\bar\beta_k$ in the individual EM model, and}\label{iecon}\\
p_{ij} &= 1 - \bar\gamma_i\bar\delta_j\quad\mbox{for some $\bar\gamma_1,\ldots,\bar\gamma_k,\bar\delta_1,\ldots,\bar\delta_k$ in the mass EM model.}\label{mecon}
\end{align}
He says that he was unable to prove that (\ref{iecon}) is a sufficient condition for panmixia, and instead provides numerical evidence to back up this claim. In contrast, he does give an argument to show that (\ref{mecon}) is a sufficient condition for panmixia. However, this argument is not rigorous because of his underlying LLN approximation (i.e., replacing quantities with their expectations).

Gimelfarb draws two main conclusions from the sufficient conditions (\ref{iecon}) and (\ref{mecon}) for panmixia. First, for either model, given a population, there is a many-to-one correspondence between mating preference matrices and expected mating patterns. 
This is a very important point; not only theoretically, but also practically. Indeed, the mating pattern of a population can be observed in its habitat whereas the mating preferences have to be determined by laboratory experiments which are relatively costly. It is therefore tempting to try to infer the latter from the former, but Gimelfarb concludes that this inverse problem cannot be solved. His second conclusion is that knowing just the mating preference matrix is not enough for predicting the mating pattern. We need to also know whether we have, say, individual EM or mass EM. On a related note, Gimelfarb mentions that these two are of course not the only possible models. They are the extreme ones in some sense, and various intermediate cases can be considered, too. We will come back to this point later in Section \ref{conicsection}.

Gimelfarb was not the first author to propose an earlier version of the SEM model. Indeed, what he calls individual EM was previously introduced by Romney \cite{Rom71} in the context of anthropology to model marriages in a community. Mosteller \cite{Mos68} analyzed Romney's model and gave recursive equations for the expected mating pattern, which can be iteratively solved when $n$ is sufficiently small. What is particularly important about Mosteller's approach is that he did not make the LLN approximation of Gimelfarb, as the latter replaces the SEM model with a deterministic one. On this note, prior to Gimelfarb, such a deterministic EM model was proposed by Taylor \cite{Tay75} who gave a system of ordinary differential equations describing the evolution of $Q(t)$. However, he could provide only numerical solutions for this system. Also, neither Mosteller nor Taylor mentioned panmixia in their papers.

Panmixia is one of the fundamental concepts in population genetics. In particular, it is one of the main assumptions underlying the Hardy-Weinberg law which states that genotype frequencies remain constant in a population to which no evolutionary force acts on, see, e.g., \cite{Ewe04} for details. In the literature, panmixia is also referred to as random mating. Gimelfarb \cite{Gim88a} points out that the latter term is rather misleading especially for a bottom-up approach (i.e., from mating preferences to mating patterns) such as in the SEM model. Indeed, random mating suggests that the animals do not have any preferences, i.e., there exists a constant $p\in(0,1]$ such that $p_{ij} = p$ for every $i,j\in\{1,\ldots,k\}$. However, as we have already said, panmixia is about the expected mating pattern and there are many mating preference matrices that give rise to it. 

In the cases of homogamy and heterogamy, the genotype frequencies might differ greatly from the ones predicted by the Hardy-Weinberg law, see \cite[Chapter 4]{Gil98} and the references therein. The prevalence of these cases is studied in detail across species in \cite{JiaBolKir13}. Also, for the effect of homogamy and heterogamy on the genetical evolution of a finite population, see e.g., \cite{Eth08}, which is inspired by \cite{Der92}. In these works, the terms positive and negative assortative mating are used to mean homogamy and heterogamy, respectively. Again, one should keep in mind that these terms refer to the expected mating pattern and not to the mating preferences. Indeed, 
(say) type-$1$ females in a homogamous population need not necessarily prefer mating with type-$1$ males. See Section \ref{conicsection} for a class of examples.

In the population dynamics literature, most models of pair formation assume that the females unilaterally accept or reject the males. This assumption is generally realistic (see \cite{And94} for the empirical aspects of sexual selection), and its various consequences have been studied in, e.g., \cite{Kir82}, \cite{Lan81}, and recently in \cite{Alex1}. Therefore, on one hand, the fact that the SEM model removes this assumption might seem unnecessary. On the other hand, not specifying which sex accepts or rejects the other one extends the scope of the model and makes it potentially suitable for two-sided matching problems that also have applications outside of biology. Such problems are typically studied using game theory, see the survey paper \cite{Ber00} and the series of papers \cite{Alp99, Alp05} regarding assortative mating. Moreover, in contrast to Gimelfarb's EM models, allowing both females and males to fire at prescribed rates not only makes the SEM model more versatile, but also introduces degrees of freedom in the EM law which then can be exploited to yield exact formulas under certain conditions. This is precisely what we will do in our analysis.

Having mentioned some of the advantages of the SEM model, we should point out that it clearly has limitations, too, as it is a model for permanent monogamous pair formation without births, deaths or offsprings. In his aforementioned paper \cite{Gim88a}, Gimelfarb also considers models where either one or both sexes are allowed to be polygamous. These models turn out to be much easier to analyze and less interesting in their behavior. Moreover, in another paper \cite{Gim88b}, he introduces a simple model of pair formation that allows separation of pairs and analyzes it using information theoretical concepts. Regarding models incorporating births, deaths and offsprings, we refer the reader to the recent paper by Hadeler \cite{Had12} and the references therein. From a biological point of view, the fact that our model does not include these factors can be partially justified by assuming that everything takes place in one breeding season.



Finally, as the aforementioned work of Romney \cite{Rom71} attests, the scope of the SEM model is not restricted to non-human animals. Indeed, the demographic, cultural, and technological changes of the last 10,000 years did not preclude the potential for natural and sexual selection in our species \cite{Alex2}. Determining the mating patterns of human populations has crucial applications, e.g., modelling how sexually transmitted diseases spread, see \cite{DieHad88}. However, we can attempt to answer such questions only after incorporating separations and/or polygamy into our model.

\subsection{Summary of our results}\label{ourresults}

Before giving the precise statements of our results on the SEM model in Sections \ref{generalsection}, \ref{poissonsection} and \ref{bernoullisection}, which require technical assumptions and further notation, we will summarize them below. While doing so, we will provide motivation for them, and explain how they are related to each other as well as to the previous results in the literature.

\subsubsection{General firing times}

Recall that, while describing the SEM model in Section \ref{themodel}, we have made certain assumptions regarding the joint distribution of the point processes $\{N(\zeta)\}_{\zeta\in Z}$ which give the firing times $\tau = (\tau_s(\zeta))_{s\ge1}$ of the animals. There, we have also said that we will make further technical assumptions to ensure that $\tau\in\Phi$ (see Definition \ref{topgaz}) almost surely. We start Section \ref{generalsection} by listing all of these assumptions which hold throughout the paper.

The sampling procedure at the encounter stage (see Section \ref{themodel}) is the most complicated aspect of the SEM model. In Section \ref{alternativerepsection}, we give an alternative representation of the model which amounts to decomposing the encounter stage at any firing round into two steps: at the first one, we pair up all of the single animals uniformly and without replacement; and at the second one, we discard the pairs that do not have any animals that have fired in that round. We make repeated use of this representation in our arguments.

In Section \ref{defmatesection}, we consider the SEM model under the assumption of definite mating upon encounter, i.e., we take $p_{ij} = 1$ for every $i,j\in\{1,\ldots,k\}$. In this special case, it is possible to give a detailed analysis and provide exact formulas for (\ref{tozert}), (\ref{bayankut}) and (\ref{nboz}). This is due to the fact that the mating stages at the firing rounds are trivial, i.e., the two-stage structure of the model is reduced to one. Our first result is Theorem \ref{bekirog} which says that the terminal pair-list $\mathcal{L}(T)$ is uniformly distributed under $\mathbb{P}^\t$ for every $\t\in\Phi$. In particular, $\mathcal{L}(T)$ and $\tau = (\tau_s(\zeta))_{s\ge1}$ are independent under $\mathbb{P}$. This is very intuitive. Indeed, no animal rejects the member of the opposite sex that it is randomly paired with, and therefore the order with which the pairs are formed should not matter. This elementary result has many important consequences. First of all, since the mating pattern $Q(T)$ is a function of the terminal pair-list $\mathcal{L}(T)$, the distribution of $Q(T)$ under $\mathbb{P}^\t$ for every $\t\in\Phi$ can be easily computed and turns out to be multiple hypergeometric. Therefore, the expected mating pattern under $\mathbb{P}^\t$ is a contingency table in product form for every $\t\in\Phi$. In particular, the species is panmictic as one would predict. 

Our main result in the case of definite mating upon encounter is Theorem \ref{kungfu} which gives the distribution of $Q(t)$ under $\mathbb{P}$ for any $t\ge0$ (rather than just the terminal time $T$). This result is yet another consequence of Theorem \ref{bekirog}. Indeed, the proof of Theorem \ref{kungfu} uses the alternative representation of the model, and relies on the observation that the pair-list process $\mathcal{L}(\cdot)$ is measurable with respect to $\mathcal{L}(T)$ and $\tau = (\tau_s(\zeta))_{s\ge1}$ which are independent under $\mathbb{P}$ (as shown in Theorem \ref{bekirog}). Finally, for any $t\ge0$, once the distribution of $Q(t)$ under $\mathbb{P}$ is known, the expected pair-type matrix $\mathbb{E}[Q(t)]$ is easily computed.

We are able to obtain all of these exact formulas in the special case of definite mating upon encounter, but this is unfortunately not representative of the generic case. In Section \ref{defmatemixsection}, as a first step in dealing with nontrivial mating preferences, we consider the SEM model under the assumption that, for every $i,j\in\{1,\ldots,k\}$, $p_{ij} = 1$ holds if and only if $i \ne j$. In words, there is definite mating upon encounter for mixed-type temporary pairs only. Here and throughout, pure-type (resp.\ mixed-type) refers to animals in a pair having the same (resp.\ different) types. In this case,  the expectation of $Q_{ii}(T)$ under $\mathbb{P}$ for pure-type pairs should be strictly less than what it was in the previous case. This prediction turns out to be true even under $\mathbb{P}^\t$ for every $\t\in\Phi$, and its precise statement constitutes Theorem \ref{aaolurmu}. 

At first sight, the special cases of mating preferences considered in Sections \ref{defmatesection} and \ref{defmatemixsection} seem to be limited in their scope. However, as we will outline below, when the point processes $\{N(\zeta)\}_{\zeta\in Z}$ are Poisson or Bernoulli, it is possible to generalize our results for definite mating upon encounter to a wide class of mating preferences.

\subsubsection{Poisson firing times}

In Section \ref{poissonsection}, we consider the SEM model under the assumption that the point processes $\{N(\zeta)\}_{\zeta\in Z}$ are Poisson. For any $\zeta\in Z$, we denote the intensity of $N(\zeta)$ by $\alpha_i$ (resp.\ $\beta_j$) if $\zeta$ is a type-$i$ female (resp.\ type-$j$ male). In this case, it is clear that $Q(\cdot)$ is a continuous-time pure jump Markov chain under $\mathbb{P}$. Moreover, its entries have jumps of size exactly $1$ since there are no multiple firings at any time. In Section \ref{poiinfsec}, we give the infinitesimal generator of $Q(\cdot)$ which turns out to depend on $p_{ij}$, $\alpha_i$ and $\beta_j$ only through
\begin{equation}\label{sdemir}
\pi_{ij} := p_{ij}(\alpha_i + \beta_j).
\end{equation}
Therefore, we have the freedom to change these parameters as long as the $k\times k$ matrix $$\Pi := (\pi_{ij})$$ stays the same. This observation plays a key role in our analysis. We finish Section \ref{poiinfsec} by giving recursive equations for $u_{ij}(t;x_1,\ldots,x_k;y_1,\ldots,y_k)$ and $u^*_{ij}(x_1,\ldots,x_k;y_1,\ldots,y_k)$.

Next, in Section \ref{poifinesec}, we introduce the so-called Poisson fine balance condition on $\Pi$ which requires that
$\pi_{ij} + \pi_{i'j'} = \pi_{ij'} + \pi_{i'j}$
for every $i,j,i',j'\in\{1,\ldots,k\}$. We motivate this condition by showing that it is necessary for panmixia. Then, we prove that the Poisson fine balance condition holds if and only if there exist $\bar\alpha_1,\ldots,\bar\alpha_k,\bar\beta_1,\ldots,\bar\beta_k\ge0$ such that
\begin{equation}\label{selod}
\pi_{ij} = \bar\alpha_i + \bar\beta_j
\end{equation}
for every $i,j\in\{1,\ldots,k\}$.

Our main result in Section \ref{poissonsection} is Theorem \ref{yumosoglu} which assumes that the Poisson fine balance condition is satisfied, and gives the distribution of $Q(t)$ under $\mathbb{P}$ for every $t\ge0$. The proof of Theorem \ref{yumosoglu} uses the aforementioned change-of-parameters technique. Indeed, by comparing (\ref{sdemir}) and (\ref{selod}), we can assume without loss of generality that
$$p_{ij} = 1,\quad\alpha_i = \bar\alpha_i\quad\mbox{and}\quad\beta_j = \bar\beta_j$$
for every $i,j\in\{1,\ldots,k\}$. Then, we have definite mating upon encounter and Theorem \ref{kungfu} gives the desired result. The corollaries of Theorem \ref{kungfu} carry over, too. In particular, the Poisson fine balance condition characterizes panmixia in the Poisson case.

As we have mentioned in Section \ref{literatur}, Gimelfarb's individual EM model corresponds to having Poisson firing times with intensities $\alpha_i = 0$ and $\beta_j = 1$ for every $i,j\in\{1,\ldots,k\}$. Recall condition (\ref{iecon}) which Gimelfarb conjectures to be sufficient for (asymptotic) panmixia. Note that, if (\ref{iecon}) holds for some $\bar\alpha_1,\ldots,\bar\alpha_k,\bar\beta_1,\ldots,\bar\beta_k$, then
$$\pi_{ij} = p_{ij}(\alpha_i + \beta_j) = (\bar\alpha_i + \bar\beta_j)(0 + 1) = \bar\alpha_i + \bar\beta_j,$$
i.e., the Poisson fine balance condition is satisfied by (\ref{selod}). Therefore, the species is indeed panmictic. At this point, we would like to emphasize that we thereby not only settle Gimelfarb's panmixia conjecture, but also strengthen and generalize it in the following ways:
\begin{itemize}
\item We prove that the Poisson fine balance condition is sufficient for panmixia (and not only for asymptotic panmixia).
\item We show that, in fact, the Poisson fine balance condition characterizes panmixia, i.e., it is also necessary.
\item We allow the intensities $\alpha_1,\ldots,\alpha_k,\beta_1,\ldots,\beta_k$ of the Poisson firing times to be arbitrary.
\item When the Poisson fine balance condition is satisfied, we give an explicit formula for the distribution of $Q(t)$ under $\mathbb{P}$ for any $t\ge0$.
\end{itemize}

Finally, in Section \ref{poicharsec}, we assume that $k=2$, i.e., there are only two types of females and males. Now that panmixia is characterized by the Poisson fine balance condition, it is natural to ask if homogamy and heterogamy can be similarly characterized. We accomplish this in Theorem \ref{fullpull} which says that the species is
\begin{align}
&\mbox{heterogamous if\quad $\pi_{11} + \pi_{22} < \pi_{12} + \pi_{21}$,}\nonumber\\
&\mbox{panmictic if\qquad\ \ $\pi_{11} + \pi_{22} = \pi_{12} + \pi_{21}$,\quad and}\label{conconcon}\\
&\mbox{homogamous if\quad\ $\pi_{11} + \pi_{22} > \pi_{12} + \pi_{21}$.}\nonumber
\end{align}
The proof of Theorem \ref{fullpull} uses the change-of-parameters technique, too. Indeed, for example when $\pi_{11} + \pi_{22} < \pi_{12} + \pi_{21}$, we can assume without loss of generality that we have definite mating  upon encounter for mixed-type temporary pairs only, and the desired result follows from Theorem \ref{aaolurmu}. The significance of Theorem \ref{fullpull} lies in the fact that it provides a full characterization of panmixia, homogamy and heterogamy. Every $\Pi$ satisfies exactly one of the three conditions in (\ref{conconcon}), i.e., we have a trichotomy.

\subsubsection{Bernoulli firing times}

In Section \ref{bernoullisection}, we consider the SEM model under the assumption that $\{N(\zeta)\}_{\zeta\in Z}$ are Bernoulli point processes. This means that, for any $\zeta\in Z$, $N(\zeta)$ is a random subset of $\mathbb{N} = \{1,2,3,\ldots\}$ formed by independent Bernoulli trials assigned to the natural numbers. We denote the success probability of these Bernoulli trials by $\alpha_i$ (resp.\ $\beta_j$) if $\zeta$ is a type-$i$ female (resp.\ type-$j$ male). 

Our results in the case with Bernoulli firing times are very similar to those in the previous case with Poisson firing times. In order to emphasize this similarity, we have chosen to use the same wording (wherever possible) and parallel numbering for the theorems and displays in Sections \ref{poissonsection} and \ref{bernoullisection}. This way, these two sections can be read independently of each other, and their contents can be easily compared and contrasted.

Having said this, here are some of the important points where the Bernoulli case differs from the Poisson case.
\begin{itemize}
\item The pair-type process $Q(\cdot)$ is a discrete-time (as opposed to continuous-time) Markov chain under $\mathbb{P}$. The entries of $Q(\cdot)$ can have integer jumps of size greater than $1$ since multiple firings are possible at any time $t\in\mathbb{N}$. 
\item The transition kernel of $Q(\cdot)$ 
depends on $p_{ij}$, $\alpha_i$ and $\beta_j$ only through
$$\pi_{ij} := p_{ij}(\alpha_i + \beta_j - \alpha_i\beta_j),$$
and therefore, we have the freedom to change these parameters as long as $\Pi := (\pi_{ij})$ is the same.
\item The Poisson fine balance condition is replaced by the so-called Bernoulli fine balance condition which requires that
$$(1 - \pi_{ij})(1 - \pi_{i'j'}) = (1 - \pi_{ij'})(1 - \pi_{i'j})$$
for every $i,j,i',j'\in\{1,\ldots,k\}$.
\item The Bernoulli fine balance condition holds if and only if $\exists\ \bar\alpha_1,\ldots,\bar\alpha_k,\bar\beta_1,\ldots,\bar\beta_k\in[0,1]$ such that
$$1 - \pi_{ij} = (1 - \bar\alpha_i)(1 - \bar\beta_j)$$
for every $i,j\in\{1,\ldots,k\}$.
\end{itemize}

Our main result in Section \ref{bernoullisection} is Theorem \ref{yumosoglup} which is completely parallel to Theorem \ref{yumosoglu}. Its proof uses the change-of-parameters idea and thus reduces the model to definite mating upon encounter. The distributions that Theorems \ref{yumosoglu} and \ref{yumosoglup} provide for $Q(t)$ under $\mathbb{P}$ are almost identical, except for the natural difference due to continuous-time vs.\ discrete-time. In particular, the Bernoulli fine balance condition characterizes panmixia in the Bernoulli case.

Recall from Section \ref{literatur} that Gimelfarb's mass EM model corresponds to having Bernoulli firing times with success probabilities $\alpha_i = 0$ and $\beta_j = 1$ for every $i,j\in\{1,\ldots,k\}$. His sufficient condition (\ref{mecon}) for panmixia implies the Bernoulli fine balance condition. Indeed, if (\ref{mecon}) holds for some $\bar\gamma_1,\ldots,\bar\gamma_k,\bar\delta_1,\ldots,\bar\delta_k$, then
$$1 - \pi_{ij} = 1 - p_{ij}(\alpha_i + \beta_j - \alpha_i\beta_j) = 1 - (1 - \bar\gamma_i\bar\delta_j)(0 + 1 -0\cdot 1) =  \bar\gamma_i\bar\delta_j = (1 - \bar\alpha_i)(1 - \bar\beta_j)$$
with $\bar\alpha_i = 1 - \bar\gamma_i$ and $\bar\beta_j = 1 - \bar\delta_j$. Therefore, we not only provide a rigorous proof of Gimelfarb's panmixia result, but also  strengthen and generalize it as in the previous case.

Finally, we assume that $k=2$, and prove in Theorem \ref{fullbull} that the species is
\begin{align}
&\mbox{heterogamous if\quad $(1-\pi_{11})(1-\pi_{22}) > (1-\pi_{12})(1-\pi_{21})$,}\nonumber\\
&\mbox{panmictic if\qquad\ \ $(1-\pi_{11})(1-\pi_{22}) = (1-\pi_{12})(1-\pi_{21})$,\quad and}\nonumber\\
&\mbox{homogamous if\quad\ $(1-\pi_{11})(1-\pi_{22}) < (1-\pi_{12})(1-\pi_{21})$.}\nonumber
\end{align}
Note that this trichotomy is completely parallel to (\ref{conconcon}) from the previous case.

\section{General firing times}\label{generalsection}

\subsection{Assumptions and further notation}\label{technicalpoints}

In this section, we will make the following rather general assumptions regarding the firing times of the animals.

\begin{enumerate}
\item [(Gen1)] $\{N(\zeta)\}_{\zeta\in Z}$ are mutually independent simple point processes on $[0,\infty)$.
\item [(Gen2)]$N(\zeta)$ are identically distributed for all type-$i$ females (resp.\ type-$j$ males). 
\item [(Gen3)] $N(\zeta)\{0\}$ is almost surely zero for every $\zeta\in Z$.
\item [(Gen4)] $N(\zeta)[0,\infty)$ is almost surely either zero or infinite for each $\zeta\in Z$. Moreover, if it is zero for a female (resp.\ male), then it is infinite for all males (resp.\ females).
\end{enumerate}
Explanation: (Gen1) and (Gen2) were already mentioned in Section \ref{themodel}. (Gen3) is equivalent to saying that the firing times are almost surely positive, which implies that $\mathcal{L}(0) = \emptyset.$ Finally, on one hand, (Gen4) allows certain animals to not fire at all, on the other hand, in combination with the $p_{ij}>0$ assumption, it ensures that such animals eventually mate (upon an encounter initiated by a member of the opposite sex). Recall Definition \ref{topgaz} and note that (Gen1)--(Gen4) imply $\tau\in\Phi$ almost surely. Here and throughout, as a convention, if $N(\zeta)$ has no time points, i.e., $N(\zeta)[0,\infty) = 0$, we set $\tau_s(\zeta) = \infty$ for every $s\ge1$.

Let $\mathcal{M}^{k\times k}(\mathbb{N}\cup\{0\})$ be the set of $k\times k$ matrices with nonnegative integer entries, equipped with the following partial order: $M\le M'$ if and only if $m_{ij}\le m'_{ij}$ for every $i,j\in\{1,\ldots,k\}$. Denote the $i$th row sum, the $j$th column sum and the grand total of any $M = (m_{ij})\in \mathcal{M}^{k\times k}(\mathbb{N}\cup\{0\})$ by
$$m_{i,\cdot} = \sum_{j'=1}^km_{ij'},\qquad m_{\cdot,j} = \sum_{i'=1}^km_{i'j}\qquad\mbox{and}\qquad m_{tot} = \sum_{i'=1}^k\sum_{j'=1}^k m_{i'j'},$$
respectively. With this notation, the pair-type process $Q(\cdot)$ takes values in
\begin{equation}\label{eeeh}
\mathcal{E} = \mathcal{E}(x_1,\ldots,x_k;y_1,\ldots,y_k) = \{M\in\mathcal{M}^{k\times k}(\mathbb{N}\cup\{0\}): m_{i,\cdot} \le x_i\ \mbox{and}\ m_{\cdot,j} \le y_j\}
\end{equation}
and its initial value is $Q(0) = 0$, the zero matrix.

At any time $t\in[0,T]$, the number of single type-$i$ females, the number of single type-$j$ males and the total number of single females (or males) are equal to
$$x_i - Q_{i,\cdot}(t),\qquad y_j - Q_{\cdot,j}(t)\qquad\mbox{and}\qquad n - Q_{tot}(t),$$
respectively. In particular, the mating pattern $Q(T)$ takes values in the set
$$\mathcal{E}' = \mathcal{E}'(x_1,\ldots,x_k;y_1,\ldots,y_k) = \{M\in\mathcal{M}^{k\times k}(\mathbb{N}\cup\{0\}): m_{i,\cdot} = x_i\ \mbox{and}\ m_{\cdot,j} = y_j\}$$
of all $k\times k$ contingency tables with $i$th row sum (resp.\ $j$th column sum) equal to $x_i$ (resp.\ $y_j$). Note that $|\mathcal{E}'|>1$ if and only if $$|\{i=1,\ldots,k: x_i > 0\}|>1\qquad\mbox{and}\qquad |\{j=1,\ldots,k: y_j > 0\}|>1.$$

It is clear that the pair-type matrix $Q(t)$ at any $t\ge0$ is measurable with respect to the pair-list $\mathcal{L}(t)$. Indeed, define a function $\gamma_1: \{\varphi_1,\ldots,\varphi_n\}\to\{1,\ldots,k\}$ by setting $\gamma_1(\varphi_a) = i$ if $\varphi_a$ is of type-$i$. Similarly, define a function $\gamma_2:\{\mu_1,\ldots,\mu_n\}\to\{1,\ldots,k\}$ which encodes the type of each male. Finally, let $\Gamma(\varphi_a,\mu_b) := (\gamma_1(\varphi_a),\gamma_2(\mu_b))$. With this notation,
\begin{align}
x_i &= \sum_{a=1}^n\one_{\{\gamma_1(\varphi_a) = i\}},\qquad y_j = \sum_{b=1}^n\one_{\{\gamma_2(\mu_b) = j\}}\qquad \mbox{and}\nonumber\\
Q_{ij}(t) &= \sum_{a=1}^n\sum_{b=1}^n\one_{\{(\varphi_a,\mu_b)\in \mathcal{L}(t),\, \Gamma(\varphi_a,\mu_b) = (i,j)\}}.\label{sancak}
\end{align}

Observe that the terminal pair-list $\mathcal{L}(T)$ consists of exactly $n$ pairs, i.e., $|\mathcal{L}(T)| = n$, and that no two pairs in it have a common animal. We will refer to the latter property as being admissible. Each realization of $\mathcal{L}(T)$ can be identified with an element $\sigma$ of the symmetric group $\Sigma_n$ (i.e., a permutation of $\{1,\ldots,n\}$) in the following way:
\begin{equation}\label{identify}
\sigma(a) = b\qquad\iff\qquad(\varphi_a,\mu_b)\in \mathcal{L}(T).
\end{equation}
We will abbreviate this identification as $\mathcal{L}(T)=\mathcal{C}_{\sigma}$. With this notation, $\mathcal{L}(T)$ is sampled from $$\Lambda_n := \{\mathcal{C}_{\sigma}: \sigma\in \Sigma_n\}.$$

\subsection{An alternative representation of the SEM model}\label{alternativerepsection}

Recall (\ref{guzel}) and the sampling procedure at the encounter stage of the first firing round. The following lemma gives the probability measure this procedure induces on the set of admissible collections of pairs.

\begin{lemma}\label{disordered}
Given any $\t\in\Phi$, fix an admissible collection $\mathcal{C}$ of pairs such that
\begin{itemize}
\item [(i)] each $\zeta\in Z$ with $t_1(\zeta) = t_1^*$ is in a pair in $\mathcal{C}$, and
\item [(ii)] each pair $(\varphi_a,\mu_b)\in\mathcal{C}$ satisfies $t_1(\varphi_a)\wedge t_1(\mu_b) = t_1^*$.
\end{itemize}
Then,
\begin{equation}\label{permutation}
\mathbb{P}^\t((\Delta\mathcal{L})'(t_1^*) = \mathcal{C}) = \frac1{n(n-1)\cdots(n-|\mathcal{C}|+1)} = \frac{(n-|\mathcal{C}|)!}{n!}.
\end{equation}
\end{lemma}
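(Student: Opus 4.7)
The plan is to unfold the Stage~I sampling procedure at the first firing round and multiply the conditional sampling probabilities one pair at a time.

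Set $F := \{\zeta \in Z : t_1(\zeta) = t_1^*\}$ and enumerate $F$ as $\zeta_1, \ldots, \zeta_{|F|}$ in the deterministic order fixed by the model. Hypothesis (i) says that $F$ is exactly the set of firing animals appearing in pairs of $\mathcal{C}$, while (ii) says that every pair of $\mathcal{C}$ has at least one endpoint in $F$. Consequently each $\mathcal{C}$-pair contains either one or two elements of $F$.

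I would then process $\zeta_1, \ldots, \zeta_{|F|}$ in order while tracking the singles' pool. At step $s$, either $\zeta_s$ has already been sampled by some earlier $\zeta_j$ (which happens precisely when the $\mathcal{C}$-pair containing $\zeta_s$ has both endpoints in $F$ and the other endpoint precedes $\zeta_s$), in which case no random choice is made; or $\zeta_s$ must itself sample a member of the opposite sex uniformly from the current pool. Since $S(t_0^*) = Z$ and every previously formed pair removed exactly one female and one male from availability, if $k-1$ pairs have been formed during steps $1,\ldots,s-1$, then the pool of opposite-sex animals available to $\zeta_s$ has cardinality $n - (k-1)$. Hence $\zeta_s$ picks the prescribed partner with conditional probability $1/(n-k+1)$. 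Each pair of $\mathcal{C}$ is produced at exactly one such sampling step (the one indexed by its earliest endpoint in the ordering of $F$), so the total number of sampling steps is $|\mathcal{C}|$ and the successive denominators traverse $n, n-1, \ldots, n-|\mathcal{C}|+1$, regardless of the chosen ordering of $F$. Multiplying yields
\[
\mathbb{P}^\t\bigl((\Delta\mathcal{L})'(t_1^*) = \mathcal{C}\bigr) \;=\; \prod_{k=1}^{|\mathcal{C}|} \frac{1}{n - k + 1} \;=\; \frac{(n - |\mathcal{C}|)!}{n!},
\]
as claimed.

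The only delicate point --- and really the main thing to check --- is that hypotheses (i) and (ii) are precisely what is needed for the procedure to be able to realize $\mathcal{C}$ at all: (ii) rules out $\mathcal{C}$-pairs consisting of two non-firing animals, which Stage~I never produces, while (i) ensures that every member of $F$ does end up paired, which Stage~I also forces. This incidentally foreshadows Remark~\ref{bahaii}, since the successive denominators depend only on how many pairs have been created so far, not on the order in which $F$ was processed.
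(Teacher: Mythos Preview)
Your proof is correct and follows essentially the same idea as the paper's: process the firing animals in their fixed order, observe that each fresh sampling step hits the prescribed partner with probability $1/(\text{current pool size})$, and multiply. The paper packages this as an induction on $n$ (peel off the first sampling, reduce to $n-1$), whereas you unroll the product directly; the content is identical. One cosmetic remark: you use $k$ as the running index in your product, which clashes with the paper's use of $k$ for the number of types --- consider renaming it.
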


\begin{proof}
We will show this by induction on $n\ge1$. If $n=1$, then there is only one possible collection $\mathcal{C} := \{(\varphi_1,\mu_1)\}$, which is consistent with $$\frac{(n-|\mathcal{C}|)!}{n!} = \frac{(1-1)!}{1!} = 1.$$
For any $n\ge2$, assume that the desired result holds for $n-1$. The probability that the first animal (with respect to the order on $\{\zeta\in Z: t_1(\zeta) = t_1^*\}$), say $\varphi_a$, indeed samples whoever it is paired up with in $\mathcal{C}$, say $\mu_b$, is equal to $\frac1{n}$. If $|\mathcal{C}|=1$, then we are done; if not, then we have the following left: (i) $n-1$ females and males for the subsequent samplings at $t_1^*$; and (ii) $|\mathcal{C}|-1$ pairs in $\mathcal{C}\setminus\{(\varphi_a,\mu_b)\}$. Therefore, the sought probability is equal to the right-hand side of (\ref{permutation}) by the induction hypothesis.
\end{proof}

\begin{remark}\label{bahaii}
It is evident from this proof that the distribution of $(\Delta\mathcal{L})'(t_1^*)$ under $\mathbb{P}^\t$ does not depend on the order imposed on $\{\zeta\in Z: t_1(\zeta) = t_1^*\}$.
\end{remark}

Observe that if we sample a $\sigma\in \Sigma_n$ uniformly at random and identify it with an admissible collection $\mathcal{C}_{\sigma}$ of $n$ pairs via (\ref{identify}), then the probability that it contains (as a subset) a given admissible collection $\mathcal{C}$ is equal to the right-hand side of (\ref{permutation}). This simple observation provides us with the following alternative representation of the SEM model:
Right before the $r$th firing round starts, the pair-list is equal to $\mathcal{L}(t_{r-1}^*)$, and there are $n - Q_{tot}(t_{r-1}^*)\ge 1$ single females and males. At $t_r^*$, relabel these animals $1,\ldots,n - Q_{tot}(t_{r-1}^*)$ and pair them up according to $\mathcal{C}_{\sigma}$ where $\sigma\in \Sigma_{n - Q_{tot}(t_{r-1}^*)}$ is sampled uniformly at random. Next, form $(\Delta\mathcal{L})'(t_r^*)$ by temporarily keeping only those pairs that have at least one animal that fires at the $r$th round. Finally, for each such type-$ij$ temporary pair, sample a Bernoulli random variable with success probability $p_{ij}$ and discard the pair in the event of failure. This gives us the set $\Delta\mathcal{L}(t_r^*)$ of new permanent pairs which are then added to $\mathcal{L}(t_{r-1}^*)$ to form the pair-list $\mathcal{L}(t_r^*) = \mathcal{L}(t_{r-1}^*) \cup \Delta\mathcal{L}(t_r^*)$.

Although the rather top-down encounter mechanism  in this representation of the SEM model is perhaps not natural from a biological point of view, it will turn out to be very convenient for mathematical analysis.

\subsection{Definite mating upon encounter}\label{defmatesection}

Assume that $p_{ij} = 1$ for every $i,j\in\{1,\ldots,k\}$. Then, the two stages of the SEM model are reduced to one since every encounter definitely results in mating. In particular, each animal mates by its first firing time at the latest, and therefore its subsequent firing times are irrelevant. In this case, it is intuitively clear that the terminal pair-list $\mathcal{L}(T)$ should be uniformly distributed on $\Lambda_n = \{\mathcal{C}_{\sigma}: \sigma\in \Sigma_n\}$ under $\mathbb{P}$. In fact, we have a much stronger result. 

\begin{theorem}\label{bekirog}
If $p_{ij}=1$ for every $i,j\in\{1,\ldots,k\}$, then the terminal pair-list $\mathcal{L}(T)$ is uniformly distributed on $\Lambda_n$ under $\mathbb{P}^\t$ for every $\t\in\Phi$. In particular, $\mathcal{L}(T)$ and $\tau = (\tau_s(\zeta))_{s\ge1,\zeta\in Z}$ are independent under $\mathbb{P}$.
\end{theorem}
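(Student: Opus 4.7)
The plan is to proceed by induction on the population size $n$, exploiting the alternative representation of Subsection \ref{alternativerepsection} together with Lemma \ref{disordered}. The base case $n=1$ is trivial since $\Lambda_1$ consists of a single pair-list. For the inductive step, I would fix $n\ge 2$, assume the result for every smaller population, fix $\t\in\Phi$ and a target $\sigma\in\Sigma_n$, and aim to show $\mathbb{P}^{\t}(\mathcal{L}(T)=\mathcal{C}_\sigma)=1/n!$. Working in the alternative representation, at the first firing round all $n$ singles get paired by a uniformly sampled permutation of $\{1,\ldots,n\}$, and because $p_{ij}=1$ throughout we have $\Delta\mathcal{L}(t_1^*)=(\Delta\mathcal{L})'(t_1^*)$, namely those pairs containing at least one animal of $F_1:=\{\zeta\in Z:t_1(\zeta)=t_1^*\}$.

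Let $\mathcal{D}(\sigma)\subseteq\mathcal{C}_\sigma$ be the collection of pairs of $\mathcal{C}_\sigma$ that meet $F_1$. By construction, $\mathcal{D}(\sigma)$ satisfies hypotheses (i) and (ii) of Lemma \ref{disordered} with $|\mathcal{D}(\sigma)|\ge 1$, and that lemma gives
\be\label{planone}
\mathbb{P}^{\t}\!\left(\Delta\mathcal{L}(t_1^*)=\mathcal{D}(\sigma)\right)=\frac{(n-|\mathcal{D}(\sigma)|)!}{n!}.
\ee
The key observation is that, on the event $\{\mathcal{L}(T)=\mathcal{C}_\sigma\}$, the pairs of $\Delta\mathcal{L}(t_1^*)$ are precisely the pairs of $\mathcal{C}_\sigma$ meeting $F_1$, i.e.\ $\mathcal{D}(\sigma)$, so this event is contained in the event of (\ref{planone}). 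Conditional on $\{\Delta\mathcal{L}(t_1^*)=\mathcal{D}(\sigma)\}$, the subsequent dynamics is itself an SEM process with definite mating upon encounter on the reduced singles pool $S(t_1^*)$, which contains $n':=n-|\mathcal{D}(\sigma)|<n$ females and $n'$ males, with firing times obtained by restricting $\t$ to $\zeta\in S(t_1^*)$. By the inductive hypothesis, the continuation $\mathcal{L}(T)\setminus\Delta\mathcal{L}(t_1^*)$ is uniform on the $n'!$ admissible pair-lists of $S(t_1^*)$, hence its conditional probability of equalling $\mathcal{C}_\sigma\setminus\mathcal{D}(\sigma)$ is $1/n'!$. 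Multiplying with (\ref{planone}) gives $\mathbb{P}^{\t}(\mathcal{L}(T)=\mathcal{C}_\sigma)=1/n!$, which is independent of $\sigma$. The independence of $\mathcal{L}(T)$ and $\tau$ under $\mathbb{P}$ is then immediate: $\mathbb{P}(\mathcal{L}(T)=\cdot\,|\,\tau=\t)$ equals the uniform law on $\Lambda_n$ for every $\t\in\mathrm{supp}(\nu)$, so this conditional law does not depend on $\t$.

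The main obstacle is really just bookkeeping. I must verify, first, that the restricted firing times still lie in $\Phi$ for the smaller population, so that the inductive hypothesis genuinely applies; this is essentially free, since conditions (a) and (b) of Definition \ref{topgaz} transfer verbatim under restriction (if every $\zeta\in Z_f$ fires infinitely often in $\t$, then so does every $\zeta\in Z_f\cap S(t_1^*)$). Second, I must justify the factorization of $\mathbb{P}^{\t}(\mathcal{L}(T)=\mathcal{C}_\sigma)$ into the round-$1$ output (governed by Lemma \ref{disordered}) and the subsequent continuation (governed by the inductive hypothesis). This factorization is clear in the alternative representation, because the uniform permutation used at round $1$ is independent of the fresh uniform permutations used at later rounds, and the firing times of $\zeta\in S(t_1^*)$ after $t_1^*$ are deterministic (under $\mathbb{P}^{\t}$) and thus unaffected by round $1$.
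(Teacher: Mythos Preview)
Your proposal is correct and follows essentially the same approach as the paper's own proof: strong induction on $n$, identifying the first-round output with the subcollection $\mathcal{D}(\sigma)$ of $\mathcal{C}_\sigma$ meeting the set of first-round firers (the paper calls this $\mathcal{C}_\sigma(t_1^*)$), applying Lemma~\ref{disordered} for the factor $(n-|\mathcal{D}(\sigma)|)!/n!$, and then the inductive hypothesis for the remaining factor $1/(n-|\mathcal{D}(\sigma)|)!$. Your added bookkeeping paragraph (checking that the restricted firing times remain proper and that the factorization is legitimate) is a welcome elaboration of points the paper leaves implicit.
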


\begin{proof}
We will prove this by strong induction on $n\ge1$. The case $n=1$ is trivial since there is only one possible collection $\{(\varphi_1,\mu_1)\}$ of pairs. For any $n\ge2$, assume that the desired result holds for all $1\le n'\le n-1$. Fix a $\sigma\in \Sigma_n$ and let $$\mathcal{C}_{\sigma}(t_1^*) = \{(\varphi_a,\mu_b)\in\mathcal{C}_{\sigma}:\tau_1(\varphi_a)\wedge\tau_1(\mu_b) = t_1^*\}.$$
Since $\mathcal{L}(0) = \emptyset$ and every encounter results in mating, we have $\mathcal{L}(t_1^*)  = \Delta\mathcal{L}(t_1^*) = (\Delta\mathcal{L})'(t_1^*)$. Therefore,
$$\mathbb{P}^\t(\mathcal{L}(t_1^*) = \mathcal{C}_{\sigma}(t_1^*)) = \frac{(n-|\mathcal{C}_{\sigma}(t_1^*)|)!}{n!}$$
by Lemma \ref{disordered} (or, equivalently, the alternative representation of the model). If $|\mathcal{C}_{\sigma}(t_1^*)|=n$, then $\mathcal{C}_{\sigma} = \mathcal{C}_{\sigma}(t_1^*)$, $T =  t_1^*$, and we are done; if not, we have the following left: (i) $n-|\mathcal{C}_{\sigma}(t_1^*)|$ single females and males; and (ii) $n-|\mathcal{C}_{\sigma}(t_1^*)|$ pairs in $\mathcal{C}_{\sigma}\setminus\mathcal{C}_{\sigma}(t_1^*)$. Therefore,
\begin{align*}
\mathbb{P}^\t(\mathcal{L}(T) = \mathcal{C}_{\sigma}) &= \mathbb{P}^\t(\mathcal{L}(t_1^*) = \mathcal{C}_{\sigma}(t_1^*))\mathbb{P}^\t(\mathcal{L}(T) = \mathcal{C}_{\sigma}\,|\,\mathcal{L}(t_1^*) = \mathcal{C}_{\sigma}(t_1^*))\\
&= \frac{(n-|\mathcal{C}_{\sigma}(t_1^*)|)!}{n!}\frac1{(n-|\mathcal{C}_{\sigma}(t_1^*)|)!} = \frac1{n!}
\end{align*}
by the induction hypothesis.
\end{proof}

Since the mating pattern $Q(T)$ is a function of the terminal pair-list $\mathcal{L}(T)$, we immediately obtain the following corollaries.

\begin{corollary}\label{korcount}
Take a population with $x_1,\ldots,x_k$ females and $y_1,\ldots,y_k$ males of types $1,\ldots,k$, respectively, such that (\ref{destur}) holds. If $p_{ij}=1$ for every $i,j\in\{1,\ldots,k\}$, then
\begin{equation*}
\mathbb{P}^\t(Q(T) = M) = \frac{\Big(\prod_{i} x_i!\Big)\left(\prod_{j} y_j!\right)}{n!\left(\prod_{i,j}m_{ij}!\right)}
\end{equation*}
for every $\t\in\Phi$ and $M\in\mathcal{E}'$, i.e., the distribution of $Q(T)$ under $\mathbb{P}^\t$ (and, therefore, under $\mathbb{P}$) is multiple hypergeometric.
\end{corollary}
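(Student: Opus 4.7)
The plan is to leverage Theorem \ref{bekirog} directly: since $\mathcal{L}(T)$ is uniform on $\Lambda_n$ under $\mathbb{P}^\t$, computing $\mathbb{P}^\t(Q(T) = M)$ reduces to counting the permutations $\sigma \in \Sigma_n$ for which the associated pair-list $\mathcal{C}_\sigma$ gives rise, via (\ref{sancak}), to the matrix $M$, and then dividing the count by $|\Sigma_n| = n!$.

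To carry out the count, I would fix a target $M \in \mathcal{E}'$ and organize the counting by cell $(i,j)$. Let $F_i := \{a : \gamma_1(\varphi_a) = i\}$ (with $|F_i| = x_i$) and $G_j := \{b : \gamma_2(\mu_b) = j\}$ (with $|G_j| = y_j$). A permutation $\sigma$ gives rise to $M$ iff, for every $(i,j)$, exactly $m_{ij}$ indices $a \in F_i$ satisfy $\sigma(a) \in G_j$. Such a $\sigma$ is specified by (i) a partition of each $F_i$ into blocks of sizes $(m_{i1},\ldots,m_{ik})$, (ii) a partition of each $G_j$ into blocks of sizes $(m_{1j},\ldots,m_{kj})$, and (iii) for each cell $(i,j)$, a bijection between the two selected blocks of size $m_{ij}$. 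The respective multinomial and factorial counts multiply to
\[
\prod_{i=1}^k \frac{x_i!}{\prod_j m_{ij}!} \cdot \prod_{j=1}^k \frac{y_j!}{\prod_i m_{ij}!} \cdot \prod_{i,j} m_{ij}!
= \frac{\bigl(\prod_i x_i!\bigr)\bigl(\prod_j y_j!\bigr)}{\prod_{i,j} m_{ij}!},
\]
and dividing by $n!$ yields the stated formula.

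Since the distribution under $\mathbb{P}^\t$ does not depend on $\t$, the identity transfers verbatim to $\mathbb{P}$ by integrating against $\nu$. The main thing to verify carefully in step (ii)--(iii) is that the three choices are independent and jointly enumerate each target $\sigma$ exactly once; this is the sort of standard contingency-table bookkeeping that one should double-check but which presents no real obstacle. The only subtle point is ensuring that when some $x_i$ or $y_j$ equals $0$, the corresponding row or column of $M$ is forced to be zero by the constraint $M \in \mathcal{E}'$, so the formula (with the convention $0!=1$) remains consistent; this is automatic from the definition of $\mathcal{E}'$.
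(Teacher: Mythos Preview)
Your proposal is correct and follows essentially the same approach as the paper: both invoke Theorem~\ref{bekirog} to reduce the problem to counting permutations $\sigma\in\Sigma_n$ yielding a given contingency table $M$, and then divide by $n!$. The paper simply defers this count to \cite[p.~247]{Dav02} as a standard fact, whereas you carry it out explicitly (and correctly) via the three-step decomposition into row partitions, column partitions, and cellwise bijections.
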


\begin{proof}
By Theorem \ref{bekirog}, this is just a counting exercise. See \cite[p.\ 247]{Dav02} for the multiple hypergeometric distribution which appears in the tests of randomness for contingency tables. Our desired result is stated on that page, too. Its proof is easy and we leave it to the reader.
\end{proof}

\begin{corollary}\label{korpan}
If $p_{ij}=1$ for every $i,j\in\{1,\ldots,k\}$, then
$$u_{ij}^*(x_1,\ldots,x_k;y_1,\ldots,y_k\,|\,\t) = \mathbb{E}^\t[Q_{ij}(T)] = \frac{x_iy_j}{n}$$
for every $\t\in\Phi$ and every $x_1,\ldots,x_k,y_1,\ldots,y_k\ge0$ satisfying (\ref{destur}). Consequently, the species is panmictic.
\end{corollary}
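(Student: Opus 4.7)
The plan is to reduce the claim to the indicator representation (\ref{sancak}) and then invoke Theorem \ref{bekirog} to evaluate a single-pair probability. Writing
$$Q_{ij}(T) = \sum_{a=1}^n\sum_{b=1}^n \one_{\{(\varphi_a,\mu_b)\in\mathcal{L}(T)\}}\,\one_{\{\Gamma(\varphi_a,\mu_b)=(i,j)\}}$$
and applying linearity of $\mathbb{E}^\t$, the only nontrivial quantity is $\mathbb{P}^\t\big((\varphi_a,\mu_b)\in\mathcal{L}(T)\big)$ for each pair of labels $(a,b)$. By Theorem \ref{bekirog}, under $\mathbb{P}^\t$ the terminal pair-list equals $\mathcal{C}_\sigma$ for $\sigma$ sampled uniformly from $\Sigma_n$, and the identification (\ref{identify}) turns the event $\{(\varphi_a,\mu_b)\in\mathcal{L}(T)\}$ into $\{\sigma(a)=b\}$. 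Since exactly $(n-1)!$ of the $n!$ permutations send $a$ to $b$, this probability equals $1/n$, independently of $\t$.

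Plugging this in and factoring the type indicator as $\one_{\{\gamma_1(\varphi_a)=i\}}\,\one_{\{\gamma_2(\mu_b)=j\}}$, the double sum separates:
$$\mathbb{E}^\t[Q_{ij}(T)] = \frac{1}{n}\Bigl(\sum_{a=1}^n \one_{\{\gamma_1(\varphi_a)=i\}}\Bigr)\Bigl(\sum_{b=1}^n \one_{\{\gamma_2(\mu_b)=j\}}\Bigr) = \frac{x_iy_j}{n},$$
using the formulas for $x_i$ and $y_j$ displayed just above (\ref{sancak}). Because the right-hand side is constant in $\t$, integrating against $\nu$ yields $u_{ij}^*(x_1,\ldots,x_k;y_1,\ldots,y_k) = \mathbb{E}[Q_{ij}(T)] = x_iy_j/n$, which is precisely the definition of panmixia for the given population. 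Since the argument works for arbitrary nonnegative integers $x_1,\ldots,x_k,y_1,\ldots,y_k$ subject to (\ref{destur}), the species itself is panmictic.

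There is essentially no obstacle: all the combinatorial content sits in Theorem \ref{bekirog}, and the remainder is a one-line expectation computation. An equivalent route would be to start from the multiple hypergeometric mass function in Corollary \ref{korcount} and compute the marginal mean of $Q_{ij}(T)$ directly, but this is strictly more work than reading the answer off the uniform distribution of $\mathcal{L}(T)$ via (\ref{sancak}), so I would not take it.
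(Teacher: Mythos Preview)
Your proof is correct. It differs from the paper's in exactly the way you anticipate in your last paragraph: the paper derives Corollary \ref{korpan} by first establishing the multiple hypergeometric distribution of $Q(T)$ in Corollary \ref{korcount} and then quoting the formula for its mean, whereas you bypass the full distribution and compute $\mathbb{E}^\t[Q_{ij}(T)]$ directly from Theorem \ref{bekirog} via the indicator decomposition (\ref{sancak}) and the elementary fact that a uniformly random permutation sends $a$ to $b$ with probability $1/n$. Your route is shorter and self-contained; the paper's route has the advantage that once Corollary \ref{korcount} is in hand (which it needs anyway for later use), the mean is immediate from standard properties of the multiple hypergeometric law. Either approach is entirely adequate here.
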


\begin{proof}
By Corollary \ref{korcount}, this follows from the formula for the mean of a multiple hypergeometric distribution which is stated in \cite[p.\ 247]{Dav02} and derived exactly in the same way as that of a hypergeometric distribution.
\end{proof}

Under our current assumption of definite mating upon encounter, the pair-list process $\mathcal{L}(\cdot)$ is measurable with respect to the terminal pair-list $\mathcal{L}(T)$ and the firing times $\tau = (\tau_s(\zeta))_{s\ge1, \zeta\in Z}$. Indeed, $\mathcal{L}(T)$ is the list of ``who is destined to be with whom", and
\begin{equation}\label{kompak}
\mathcal{L}(t) = \{(\varphi_a,\mu_b)\in \mathcal{L}(T): \tau_1(\varphi_a)\wedge\tau_1(\mu_b) \le t\}
\end{equation}
for $t\ge 0$. Due to the independence of $\mathcal{L}(T)$ and $(\tau_s(\zeta))_{s\ge1, \zeta\in Z}$ (established in Theorem \ref{bekirog}), this approach is not only conceptually elegant but also computationally practical, as we will see in the theorem below.

Note that, naturally, only the first firing time of each animal appears in (\ref{kompak}). Denote the cumulative distribution function of these times by
$$F_i(t) := \nu(\tau_1(\varphi_a)\le t)\quad\mbox{if}\quad\gamma_1(\varphi_a) = i\qquad\mbox{and}\qquad G_j(t) := \nu(\tau_1(\mu_b)\le t)\quad\mbox{if}\quad\gamma_2(\mu_b) = j.$$
Since the pair-type process $Q(\cdot)$ is measurable with respect to $\mathcal{L}(\cdot)$ via (\ref{sancak}), we have the following result.

\begin{theorem}\label{kungfu}
Take a population with $x_1,\ldots,x_k$ females and $y_1,\ldots,y_k$ males of types $1,\ldots,k$, respectively, such that (\ref{destur}) holds. If $p_{ij}=1$ for every $i,j\in\{1,\ldots,k\}$, then
\begin{equation*}
\mathbb{P}\left(Q(t) = M\right) = \frac{\Big(\prod_{i} x_i!\Big)\left(\prod_{j} y_j!\right)}{n!\left(\prod_{i,j}m_{ij}!\right)}\sum_{\substack{M'\in\mathcal{E}':\\M'\ge M}}\prod_{i,j}\frac{\left(\lambda_{ij}(t)\right)^{m_{ij}}\left(1-\lambda_{ij}(t)\right)^{m'_{ij}-m_{ij}}}{(m'_{ij} - m_{ij})!}
\end{equation*}
for every $t\ge0$ and $M\in\mathcal{E}$, where 
\begin{equation}\label{gammaz}
\lambda_{ij}(t) := 1 - (1 - F_i(t))(1 - G_j(t)) = F_i(t) + G_j(t) - F_i(t)G_j(t).
\end{equation}
\end{theorem}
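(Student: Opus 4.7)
The plan is to derive the formula by conditioning on the terminal mating pattern $Q(T)$ and using the independence result Theorem \ref{bekirog} together with the representation (\ref{kompak}) of $\mathcal{L}(t)$ in terms of $\mathcal{L}(T)$ and the first firing times.

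First I would partition according to the value of $Q(T)$: since $Q(t)\le Q(T)$ entrywise (pairs can only accumulate), we have
\[
\mathbb{P}(Q(t)=M)=\sum_{M'\in\mathcal{E}',\ M'\ge M}\mathbb{P}(Q(T)=M')\,\mathbb{P}(Q(t)=M\mid Q(T)=M').
\]
By Corollary \ref{korcount}, $\mathbb{P}(Q(T)=M')=\frac{(\prod_i x_i!)(\prod_j y_j!)}{n!\prod_{i,j}m'_{ij}!}$, so the first factor is already in the desired shape; the task is to evaluate the conditional probability.

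Next I would compute $\mathbb{P}(Q(t)=M\mid Q(T)=M')$ by further conditioning on the terminal pair-list $\mathcal{L}(T)=\mathcal{C}_\sigma$, which refines $Q(T)=M'$. By Theorem \ref{bekirog}, $\mathcal{L}(T)$ is independent of $\tau$, so conditionally on $\mathcal{L}(T)=\mathcal{C}_\sigma$ the first firing times $\{\tau_1(\varphi_a),\tau_1(\mu_b)\}$ retain their marginal joint law (mutually independent with CDFs $F_{\gamma_1(\varphi_a)}$ and $G_{\gamma_2(\mu_b)}$ by assumptions (Gen1)--(Gen2)). Since the pairs in $\mathcal{C}_\sigma$ are vertex-disjoint, the events $\{\tau_1(\varphi_a)\wedge\tau_1(\mu_b)\le t\}$ indexed by $(\varphi_a,\mu_b)\in\mathcal{C}_\sigma$ depend on disjoint collections of firing variables and are therefore jointly independent, each with probability exactly $\lambda_{\gamma_1(\varphi_a),\gamma_2(\mu_b)}(t)$ defined in (\ref{gammaz}). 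Combined with the representation (\ref{kompak}) and (\ref{sancak}), this shows that, conditionally on $\mathcal{L}(T)=\mathcal{C}_\sigma$, the random matrix $Q(t)$ is a product of independent binomials:
\[
Q_{ij}(t)\sim\mathrm{Bin}(m'_{ij},\lambda_{ij}(t))\quad\text{independently across }(i,j),
\]
where $M'$ is the pair-type matrix of $\mathcal{C}_\sigma$. Because this conditional law depends on $\mathcal{C}_\sigma$ only through $M'$, it coincides with $\mathbb{P}(Q(t)=M\mid Q(T)=M')$.

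Finally I would combine these ingredients: using $\binom{m'_{ij}}{m_{ij}}=\frac{m'_{ij}!}{m_{ij}!(m'_{ij}-m_{ij})!}$, the product of the binomial normalizations cancels the $\prod_{i,j}m'_{ij}!$ in the denominator of $\mathbb{P}(Q(T)=M')$ and leaves behind $\frac{1}{\prod_{i,j}m_{ij}!(m'_{ij}-m_{ij})!}$, yielding exactly the stated formula. The only nontrivial point is justifying the conditional independence across distinct pair-types, but this is immediate once one notes that different entries of $Q(t)$ are built from firing times attached to disjoint sets of animals inside the fixed pair-list $\mathcal{C}_\sigma$; the rest is bookkeeping, so I anticipate no serious obstacle.
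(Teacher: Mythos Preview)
Your proposal is correct and follows essentially the same approach as the paper: both condition on the terminal configuration, invoke Theorem~\ref{bekirog} together with the representation~(\ref{kompak}) to obtain independent Bernoulli indicators for each pair in $\mathcal{L}(T)$, deduce that $Q_{ij}(t)$ is conditionally binomial, and then plug in Corollary~\ref{korcount}. The only cosmetic difference is that you make the intermediate conditioning on the full pair-list $\mathcal{L}(T)=\mathcal{C}_\sigma$ explicit before observing that the result depends only on $M'$, whereas the paper states the binomial conclusion directly at the level of $Q(T)$.
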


\begin{proof}
For every $t\ge0$ and $M\in\mathcal{E}$, we have
\begin{align}
\mathbb{P}\left(Q(t) = M\right) &= \sum_{\substack{M'\in\mathcal{E}':\\M'\ge M}}\mathbb{P}\left(Q(T) = M'\right)\mathbb{P}\left(Q(t) = M\,|\,Q(T) = M'\right)\nonumber\\
&= \sum_{\substack{M'\in\mathcal{E}':\\M'\ge M}}\mathbb{P}\left(Q(T) = M'\right)\prod_{i,j}{m'_{ij}\choose m_{ij}}\left(\lambda_{ij}(t)\right)^{m_{ij}}\left(1-\lambda_{ij}(t)\right)^{m'_{ij}-m_{ij}}\label{ramos}\\
&= \sum_{\substack{M'\in\mathcal{E}':\\M'\ge M}}\frac{\Big(\prod_{i} x_i!\Big)\left(\prod_{j} y_j!\right)}{n!\left(\prod_{i,j}m'_{ij}!\right)}\prod_{i,j}{m'_{ij}\choose m_{ij}}\left(\lambda_{ij}(t)\right)^{m_{ij}}\left(1-\lambda
_{ij}(t)\right)^{m'_{ij}-m_{ij}}\label{ahilli}\\
&= \frac{\Big(\prod_{i} x_i!\Big)\left(\prod_{j} y_j!\right)}{n!\left(\prod_{i,j}m_{ij}!\right)}\sum_{\substack{M'\in\mathcal{E}':\\M'\ge M}}\prod_{i,j}\frac{\left(\lambda_{ij}(t)\right)^{m_{ij}}\left(1-\lambda_{ij}(t)\right)^{m'_{ij}-m_{ij}}}{(m'_{ij} - m_{ij})!}.\nonumber
\end{align}
Explanation: $Q(T)$ and $(\tau_s(\zeta))_{s\ge1,\zeta\in Z}$ are independent by Theorem \ref{bekirog}. Recall (\ref{kompak}) and note that each type-$ij$ pair in $\mathcal{L}(T)$ is contained in $\mathcal{L}(t)$ with probability $\lambda_{ij}(t)$ and independently of all other pairs. Therefore, the conditional distribution of $Q_{ij}(t)$ given $Q_{ij}(T)$ is binomial with parameters $Q_{ij}(T)$ and $\lambda_{ij}(t)$. This gives (\ref{ramos}). Finally, (\ref{ahilli}) follows from Corollary \ref{korcount}.
\end{proof}

\begin{corollary}\label{aikido}
If $p_{ij}=1$ for every $i,j\in\{1,\ldots,k\}$, then
$$u_{ij}(t; x_1,\ldots,x_k;y_1,\ldots,y_k) =\mathbb{E}[Q_{ij}(t)] = \frac{x_iy_j\lambda_{ij}(t)}{n}$$
for every $t\ge0$ and $x_1,\ldots,x_k,y_1,\ldots,y_k\ge0$ satisfying (\ref{destur}), where $\lambda_{ij}(t)$ is defined in (\ref{gammaz}).
\end{corollary}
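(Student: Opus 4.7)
The plan is to obtain the corollary as an immediate consequence of Theorem \ref{kungfu} combined with Corollary \ref{korpan}, bypassing any direct summation against the explicit distribution of $Q(t)$. The key observation is the conditional structure already exposed in the proof of Theorem \ref{kungfu}: by Theorem \ref{bekirog}, the terminal pair-list $\mathcal{L}(T)$ is independent of the firing times $\tau$, and via the representation (\ref{kompak}) each pair $(\varphi_a,\mu_b)\in\mathcal{L}(T)$ of type $ij$ lies in $\mathcal{L}(t)$ independently of all other pairs with probability $\lambda_{ij}(t) = 1 - (1-F_i(t))(1-G_j(t))$. Consequently the conditional law of $Q_{ij}(t)$ given $Q_{ij}(T)$ is binomial with parameters $Q_{ij}(T)$ and $\lambda_{ij}(t)$.

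From this conditional binomial structure one reads off
\[
\mathbb{E}\bigl[Q_{ij}(t)\,\big|\,Q(T)\bigr] = Q_{ij}(T)\,\lambda_{ij}(t).
\]
Taking expectations and applying the tower property gives $\mathbb{E}[Q_{ij}(t)] = \lambda_{ij}(t)\,\mathbb{E}[Q_{ij}(T)]$, and Corollary \ref{korpan} supplies $\mathbb{E}[Q_{ij}(T)] = x_iy_j/n$. Combining these yields the claimed formula.

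There is essentially no obstacle here: the only thing to check carefully is the factorisation of the conditional distribution (i.e.\ the independence across distinct pairs in $\mathcal{L}(T)$ of the event ``the pair has been formed by time $t$''), and this is exactly what is used in passing from the first to the second line in the proof of Theorem \ref{kungfu}. Since we only need the first moment, we need not invoke the full multiple-hypergeometric joint distribution, and no further combinatorial identities are required. If one preferred, an equivalent (but slightly longer) route would be to start from the explicit formula in Theorem \ref{kungfu}, write $\mathbb{E}[Q_{ij}(t)] = \sum_{M\in\mathcal{E}} m_{ij}\,\mathbb{P}(Q(t)=M)$, swap the order of summation to pull out the sum over $M'\in\mathcal{E}'$, recognise the inner sum as $\mathbb{E}[\mathrm{Bin}(m'_{ij},\lambda_{ij}(t))] = m'_{ij}\lambda_{ij}(t)$, and then reduce to Corollary \ref{korpan}; but this simply reproduces the tower-property argument in disguise.
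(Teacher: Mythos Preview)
Your proof is correct and is essentially identical to the paper's own argument: both invoke the conditional binomial structure $Q_{ij}(t)\mid Q_{ij}(T)\sim B(Q_{ij}(T),\lambda_{ij}(t))$ established in the proof of Theorem~\ref{kungfu}, apply the law of total expectation, and then plug in $\mathbb{E}[Q_{ij}(T)]=x_iy_j/n$ from Corollary~\ref{korpan}.
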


\begin{proof}
We have seen in the proof of Theorem \ref{kungfu} that $$Q_{ij}(t)\,|\,Q_{ij}(T)\sim B(Q_{ij}(T), \lambda_{ij}(t))$$ where $B$ denotes the binomial distribution. Therefore,
$$\mathbb{E}[Q_{ij}(t)] = \mathbb{E}[\mathbb{E}[Q_{ij}(t)\,|\,Q_{ij}(T)]] = \mathbb{E}[Q_{ij}(T)]\lambda_{ij}(t) = \frac{x_iy_j \lambda_{ij}(t)}{n}$$
by the law of total expectation and Corollary \ref{korpan}.
\end{proof}

\subsection{Definite mating upon encounter for mixed-type temporary pairs only}\label{defmatemixsection}

Assume that, for every $i,j\in\{1,\ldots,k\}$, $p_{ij} = 1$ holds if and only if $i\neq j$. In words, there is definite mating upon encounter for mixed-type temporary pairs only. In this case, it is intuitively clear that the expected number of pure-type pairs should be strictly less than what it was when we had definite mating upon encounter for all temporary pairs. This assertion turns out to be true not only under $\mathbb{P}$ but also under $\mathbb{P}^\t$ for every $\t\in\Phi$. We start with two simple observations.

\begin{lemma}\label{monoblokdat}
For $x'_1,\ldots,x'_k,y'_1,\ldots,y'_k\ge0$ with $n' := x'_1 + \cdots + x'_k = y'_1 + \cdots + y'_k$, define
\begin{equation}\label{kargik}
H(x'_1,\ldots,x'_k; y'_1,\ldots,y'_k) = \left\{\begin{array}{ll}\frac{x'_1y'_1}{n'} &\mbox{if }n'\ne 0,\ \mbox{and}\\ 0&\mbox{if }n' = 0.\end{array}\right.
\end{equation}
Then, on the domain $0\le c_1 \le x'_1\wedge y'_1,\ldots,0\le c_k \le x'_k\wedge y'_k$,
\begin{equation}\label{isidd}
H(x'_1 - c_1,\ldots,x'_k - c_k; y'_1 - c_1,\ldots,y'_k - c_k) + c_1
\end{equation} is increasing in $c_i$ for every $i\in\{1,\ldots,k\}$. Moreover, if $x'_1x'_2y'_1y'_2\ne 0$, then (\ref{isidd}) is strictly increasing in $c_1$ (resp.\ $c_2$) when $c_2<x'_2\wedge y'_2$ (resp.\ $c_1<x'_1\wedge y'_1$).
\end{lemma}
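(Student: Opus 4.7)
The plan is to rewrite $H(x'_1 - c_1,\ldots,x'_k - c_k; y'_1 - c_1,\ldots,y'_k - c_k) + c_1$ in a single closed form and then either take partial derivatives in the $c_i$ (viewing them as real variables in a natural domain) or form the forward differences $c_i \mapsto c_i + 1$ (viewing them as integers). Either approach reduces the lemma to a short algebraic identity, so I will only sketch the former. Setting $A := x'_1 - c_1$, $B := y'_1 - c_1$, and $N := n' - (c_1 + \cdots + c_k)$, the expression in \eqref{isidd} equals $\phi(c_1,\ldots,c_k) := \tfrac{AB}{N} + c_1$ on the open region $\{N > 0\}$, and one checks that the values at $N = 0$ (which force $A = B = 0$) fit continuously. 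Thus it suffices to analyze $\phi$ on $\{N > 0\}$ and handle the trivial boundary separately.

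For $i \geq 2$, the variable $c_i$ enters only through $N$, and $\partial\phi/\partial c_i = AB/N^2 \geq 0$, giving weak monotonicity immediately (and strict monotonicity exactly when $AB > 0$, i.e.\ $c_1 < x'_1 \wedge y'_1$).

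The case $i = 1$ is the main content. Here both the numerator and denominator depend on $c_1$, and a short calculation gives
\begin{equation*}
\frac{\partial\phi}{\partial c_1} = \frac{-A-B}{N} + \frac{AB}{N^2} + 1 = \frac{N^2 - (A+B)N + AB}{N^2} = \frac{(N-A)(N-B)}{N^2}.
\end{equation*}
The key observation is the explicit identification
\begin{equation*}
N - A = \sum_{i \ge 2}(x'_i - c_i), \qquad N - B = \sum_{i \ge 2}(y'_i - c_i),
\end{equation*}
which follows from $n' = \sum_i x'_i = \sum_i y'_i$. Both sums are nonnegative on the natural domain $c_i \le x'_i \wedge y'_i$, so $\partial\phi/\partial c_1 \ge 0$. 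For the strict part, assuming $x'_1 x'_2 y'_1 y'_2 \ne 0$ and $c_2 < x'_2 \wedge y'_2$, one has $x'_2 - c_2 \ge 1$ and $y'_2 - c_2 \ge 1$ (when working with integer $c_i$; for real $c_i$, replace $\ge 1$ by $> 0$), hence $(N-A)(N-B) > 0$, giving strict monotonicity in $c_1$. The symmetric strictness claim in $c_2$ follows by exactly the same computation after relabelling, since the factorization is symmetric in the way the non-first coordinates enter.

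The only subtlety is the edge case where incrementing $c_1$ could push $N$ to $0$: since $N - A \ge 1$ in the strict regime, we have $N \ge A + 1 \ge 1$ before and $N \ge 1$ after the increment, so no denominators vanish. I do not anticipate a real obstacle; the single step that does the work is spotting the factorization $N^2 - (A+B)N + AB = (N-A)(N-B)$ and recognizing its two factors as precisely the surplus of unpaired non-type-$1$ females and males.
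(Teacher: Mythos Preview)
Your proof is correct and follows essentially the same route as the paper: both compute the partial derivative in $c_1$ and factor it as $(N-A)(N-B)/N^2 = (A/N - 1)(B/N - 1)$, and both handle the $i\ge 2$ case by noting that only the denominator $N$ depends on $c_i$. Your write-up is slightly more explicit about identifying $N-A$ and $N-B$ as the non-type-$1$ surpluses and about the boundary $N=0$, but there is no substantive difference in approach.
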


\begin{proof}
It is clear from (\ref{kargik}) that (\ref{isidd}) is increasing in $c_i$ for every $i\ne1$. Moreover, if $x'_1x'_2y'_1y'_2\neq0$, then  (\ref{isidd}) is strictly increasing in $c_2$ when $c_1<x'_1\wedge y'_1$ since the latter implies $(x'_1 - c_1)(y'_1 - c_1) \ne 0$.
Finally, regarding $c_1$, observe that
$$\frac{d}{dc_1}\left(\frac{(x'_1 - c_1)(y'_1 - c_1)}{n' - (c_1+\cdots+c_k)} + c_1\right) = \left(\frac{x'_1 - c_1}{n' - (c_1+\cdots+c_k)} - 1\right)\left(\frac{y'_1 - c_1}{n' - (c_1+\cdots+c_k)} - 1\right)\ge0.$$
If $x'_1x'_2y'_1y'_2\neq0$, then this inequality is strict when $c_2<x'_2\wedge y'_2$. This concludes the proof.
\end{proof}

\begin{lemma}\label{ddeler}
For every mating preference matrix $P$ and every $\t\in\Phi$, we have
$$\mathbb{P}^\t(Q(t_1^*) \neq 0) \ge \min_{1\le i,j\le k}p_{ij} > 0.$$
\end{lemma}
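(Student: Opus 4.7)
The plan is to exploit two simple facts: at $t_1^*$ at least one temporary pair is necessarily produced by the encounter stage, and each such pair is independently turned into a permanent pair with probability bounded below by $\min_{i,j}p_{ij}>0$. The existence of one guaranteed temporary pair at the first firing round, combined with the uniformly positive conversion probability in Stage II, will immediately give the claimed lower bound.

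To carry this out, I would first use condition (b) of Definition \ref{topgaz} to observe that all members of at least one sex have strictly increasing finite firing times, which makes the set over which (\ref{guzel}) minimizes at $r=1$ nonempty and ensures $t_1^*<\infty$. By the definition of $t_1^*$, there is at least one $\zeta\in Z$ with $t_1(\zeta)=t_1^*$; since $S(0)=Z$, this $\zeta$ participates in the encounter stage and gets paired with a member of the opposite sex, so $|(\Delta\mathcal{L})'(t_1^*)|\ge 1$ deterministically under $\mathbb{P}^\t$. Next, I would measurably pick one fixed pair $(\varphi_a,\mu_b)\in(\Delta\mathcal{L})'(t_1^*)$ (say, the one containing the firing animal with the smallest label among those with $t_1(\zeta)=t_1^*$). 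By Stage II this pair is attached an independent Bernoulli variable with success probability $p_{\Gamma(\varphi_a,\mu_b)}$, and on success $(\varphi_a,\mu_b)\in\Delta\mathcal{L}(t_1^*)\subseteq\mathcal{L}(t_1^*)$, which forces $Q(t_1^*)\neq 0$. Therefore
\[
\mathbb{P}^\t\bigl(Q(t_1^*)\neq 0\,\big|\,(\Delta\mathcal{L})'(t_1^*)\bigr)\;\ge\;p_{\Gamma(\varphi_a,\mu_b)}\;\ge\;\min_{1\le i,j\le k}p_{ij}
\]
almost surely, and integrating out the conditioning yields the lemma.

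No real obstacle arises: the entire content of the statement is that a single guaranteed temporary pair, turned permanent with uniformly positive probability, already suffices to bound $\mathbb{P}^\t(Q(t_1^*)\neq 0)$ from below by $\min_{i,j}p_{ij}$. The only minor care needed is in choosing the pair $(\varphi_a,\mu_b)$ in a measurable way; ordering by label, as in the description of Stage I, does this.
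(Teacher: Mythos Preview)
Your argument is correct and is essentially the same as the paper's own proof, just written out in more detail: the paper simply says ``consider the first animal that fires at the first round; it forms a permanent pair with probability at least $\min_{i,j}p_{ij}$, and then $Q(t_1^*)\neq 0$.'' Your additional remarks about why $t_1^*<\infty$ and the measurable selection of the pair are sound but not strictly needed for the level of rigor in the paper.
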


\begin{proof}
Consider the first animal that fires at the first round. If it forms a permanent pair at that time, then $Q(t_1^*) \neq 0$. The former event occurs with probability at least ${\displaystyle \min_{1\le i,j\le k}p_{ij}}$.
\end{proof}

\begin{theorem}\label{aaolurmu}
Assume that, for every $i,j\in\{1,\ldots,k\}$, $p_{ij} = 1$ holds if and only if $i\neq j$. Recall the definitions \eqref{bayankut} and \eqref{nboz}. Then, we have the following results:
\begin{itemize}
\item [(a)] For every $i\in\{1,\ldots,k\}$, $x_1,\ldots,x_k,y_1,\ldots,y_k\ge0$ satisfying (\ref{destur}), and $\t\in\Phi$,
$$u_{ii}^*(x_1,\ldots,x_k;y_1,\ldots,y_k\,|\,\t) \le \frac{x_iy_i}{n}.$$
In particular,
$$u_{ii}^*(x_1,\ldots,x_k;y_1,\ldots,y_k) \le \frac{x_iy_i}{n}.$$
\item [(b)] The inequalities in part (a) are strict whenever $x_ix_{i'}y_iy_{i'}\neq 0$ for some $i'\neq i$.
\end{itemize}
\end{theorem}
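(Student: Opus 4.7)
The plan is to prove (a) via a supermartingale argument and then derive (b) by upgrading the key one-step inequality to a strict one. Without loss of generality take $i=1$, and for any admissible pair-type matrix $q$ set
$$U(q) := q_{11} + H\!\left(x_{1}-q_{1,\cdot},\ldots,x_{k}-q_{k,\cdot};\,y_{1}-q_{\cdot,1},\ldots,y_{k}-q_{\cdot,k}\right)$$
with $H$ as in (\ref{kargik}). Then $U(0)=x_{1}y_{1}/n$ and, since the remaining-population vector vanishes at $T$, also $U(Q(T))=Q_{11}(T)$. I will reduce the theorem to showing that $M_{r}:=U(Q(t_{r}^{*}))$ is a $\mathbb{P}^{\t}$-supermartingale with respect to the filtration of firing rounds, at which point optional stopping (using that $M$ is bounded and $T<\infty$ a.s.) yields $\mathbb{E}^{\t}[Q_{11}(T)]=\mathbb{E}^{\t}[M_{T}]\le M_{0}=x_{1}y_{1}/n$.

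To establish the supermartingale property I will condition on $Q(t_{r-1}^{*})=q$ and couple the round-$r$ update of our model (call it Model B) with that of a ``virtual Model A'', in which every temporary pair at round $r$ mates for sure, using the same uniform pairing of the $n-q_{tot}$ remaining singles and the same firing information. Writing $\Delta K^{B}$ and $\Delta K^{A}$ for the two resulting round-$r$ pair-types, mixed pairs mate in both models so $\Delta K^{A}_{ij}=\Delta K^{B}_{ij}$ for $i\neq j$; consequently the non-negative integers $d_{l}:=\Delta K^{A}_{ll}-\Delta K^{B}_{ll}$ satisfy $\Delta K^{A}_{l,\cdot}-\Delta K^{B}_{l,\cdot}=\Delta K^{A}_{\cdot,l}-\Delta K^{B}_{\cdot,l}=d_{l}$ for every $l$, which is exactly the ``equal decrement for the two sexes'' structure required by Lemma \ref{monoblokdat}. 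Applying that lemma with $x'_{l}=x_{l}-q_{l,\cdot}-\Delta K^{B}_{l,\cdot}$, $y'_{l}=y_{l}-q_{\cdot,l}-\Delta K^{B}_{\cdot,l}$, and $c_{l}=d_{l}$ yields the pointwise comparison $U(q+\Delta K^{B})\le U(q+\Delta K^{A})$.

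On the Model A side, Corollary \ref{korpan} applied to the definite-mating sub-process started from state $q$ gives $\mathbb{E}[Q_{11}^{A}(T)\,|\,q]=U(q)$, and iterating one round forward (again invoking Corollary \ref{korpan} on the sub-sub-problem after $\Delta K^{A}$) produces the identity $\mathbb{E}[U(q+\Delta K^{A})\,|\,q]=U(q)$. Combining this identity with the pointwise comparison and taking conditional expectations gives the desired supermartingale inequality $\mathbb{E}[U(q+\Delta K^{B})\,|\,q]\le U(q)$. Averaging over $\nu$ then delivers the $\mathbb{P}$-version of (a).

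For (b), assuming $x_{1}x_{i'}y_{1}y_{i'}\neq 0$ for some $i'\neq 1$, I will locate a $\mathbb{P}^{\t}$-positive-probability event on which the pointwise comparison above is strict. Using the alternative representation at round $1$ together with Lemma \ref{ddeler}, the first firing animal $\zeta^{*}$ (with $\tau_{1}(\zeta^{*})=t_{1}^{*}$) is, with positive probability, paired with an opposite-sex partner of its own type $\ell$, and the resulting pure-$(\ell,\ell)$ Bernoulli fails; by choosing $\ell\in\{1,i'\}$ (both types have at least one female and one male by hypothesis) and checking that the rest of the round's matings keep $x'_{1},x'_{i'},y'_{1},y'_{i'}$ strictly positive, the strict half of Lemma \ref{monoblokdat} gives $U(q+\Delta K^{A})>U(q+\Delta K^{B})$ strictly on the event. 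This upgrades the supermartingale inequality at $r=1$ to $\mathbb{E}^{\t}[M_{1}]<M_{0}$, and combined with the later inequalities one obtains $\mathbb{E}^{\t}[Q_{11}(T)]<x_{1}y_{1}/n$. The main difficulty will be verifying the Model A identity $\mathbb{E}[U(q+\Delta K^{A})\,|\,q]=U(q)$ cleanly (it is this identity, together with Lemma \ref{monoblokdat}, that powers the entire supermartingale bound) and engineering the strict-inequality event in (b) uniformly in $\t\in\Phi$, since the identity of $\zeta^{*}$ and the firing set at $t_{1}^{*}$ are dictated by $\t$.
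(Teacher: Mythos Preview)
Your argument for part (a) is correct and in fact cleaner than the paper's. Both proofs rest on the same two ingredients---the pointwise comparison from Lemma \ref{monoblokdat} between the actual round increment $\Delta K^{B}$ and the ``definite-mating'' increment $\Delta K^{A}$, and the martingale identity $\mathbb{E}^{\t}[U(q+\Delta K^{A})\mid q]=U(q)$ coming from Corollary \ref{korpan}. The paper organizes these via strong induction on $n$ and, because the event $\{Q(t_{1}^{*})=0\}$ has to be treated separately, is forced into taking a supremum over $\t$ and running an $\epsilon$-argument closed off with Lemma \ref{ddeler}. Your supermartingale formulation sidesteps that entirely: when $\Delta K^{B}=0$ the inequality $U(q+\Delta K^{B})\le U(q+\Delta K^{A})$ still holds pointwise, so the one-step drift is nonnegative regardless, and bounded optional stopping finishes. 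This is a genuine streamlining.

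Part (b), however, has a gap. The type $\ell$ of the first firing animal $\zeta^{*}$ at $t_{1}^{*}$ is completely determined by $\t$; you cannot ``choose $\ell\in\{1,i'\}$''. For a $\t$ in which no type-$1$ and no type-$i'$ animal fires at round $1$, no pure $(1,1)$ or $(i',i')$ temporary pair can form there, so $d_{1}=d_{i'}=0$ and your positive-probability event is empty. (If moreover $\zeta^{*}$ is of a type $\ell$ with $x_{\ell}\wedge y_{\ell}=0$, it cannot be paired with a same-type partner at all.) The paper handles precisely this case by splitting on $\t$: in its case (ii)---some type-$1$ or type-$i'$ animal fires at round $1$---it uses the strict half of Lemma \ref{monoblokdat} just as you propose; in its case (i)---none fires---it exhibits a positive-probability $M\ne 0$ with all type-$1$ and type-$i'$ animals still single after round $1$, and then invokes the \emph{strict} induction hypothesis on the strictly smaller population. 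Your non-inductive framework has no analogue of that second step. To repair it you would need to propagate the supermartingale to the first round $\rho$ at which a type-$1$ or type-$i'$ animal fires while at least one opposite-sex animal of that type is still single, and argue that with positive $\mathbb{P}^{\t}$-probability all the relevant animals survive until $\rho$; this is doable but is exactly the case analysis you have not carried out. You flagged this as the main difficulty, and indeed your proposed resolution does not cover it.
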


\begin{proof}[Proof]
Since we can relabel the types, it suffices to prove the desired results for $u_{11}^*$. We start with part (a) and proceed by strong induction on $n\ge1$. The case $n=1$ is trivial. Indeed, 
$$u_{11}^*(x_1,\ldots,x_k;y_1,\ldots,y_k\,|\,\t) = \left\{\begin{array}{ll}0 & \mbox{if $x_1y_1 = 0$, and}\\1 & \mbox{if $x_1y_1 = 1$.}\end{array}\right.$$
For $n\ge2$, let
\begin{equation}\label{isimmisim}
\bar u_{11}^*(x_1,\ldots,x_k;y_1,\ldots,y_k) := \sup_{\t\in\Phi} u_{11}^*(x_1,\ldots,x_k;y_1,\ldots,y_k\,|\,\t).
\end{equation}
With this notation, for every $\mathbf{t}\in\Phi$,
\begin{align}
&u^*_{11}(x_1,\ldots,x_k;y_1,\ldots,y_k\,|\,\t)\nonumber\\
&\ \le \!\!\!\!\!\!\!\!\!\!\sum_{M\in\mathcal{E}\setminus(\mathcal{E}'\cup\{0\})}\!\!\!\!\!\!\!\!\!\!\!\mathbb{P}^\t(Q(t_1^*) = M)\left[\frac{(x_1 - m_{1,\cdot})(y_1 - m_{\cdot,1})}{n - m_{tot}} + m_{11}\right]\label{hipottdat}\\
&\ \quad + \sum_{M\in\mathcal{E}'}\mathbb{P}^\t(Q(t_1^*) = M)m_{11} + \mathbb{P}^\t(Q(t_1^*) = 0)\bar u^*_{11}(x_1,\ldots,x_k;y_1,\ldots,y_k)\nonumber\\
&\ = \sum_{\substack{M\in\mathcal{E}:\\M\neq 0}}\!\mathbb{P}^\t(Q(t_1^*) = M)[H(x_1 - m_{1,\cdot},\ldots,x_k - m_{k,\cdot};y_1 - m_{\cdot,1},\ldots,y_k - m_{\cdot,k}) + m_{11}]\nonumber\\
&\ \quad +\mathbb{P}^\t(Q(t_1^*) = 0)\bar u^*_{11}(x_1,\ldots,x_k;y_1,\ldots,y_k)\nonumber\\
&\ = \sum_{M\in\mathcal{E}}\mathbb{P}^\t(Q(t_1^*) = M)[H(x_1 - m_{1,\cdot},\ldots,x_k - m_{k,\cdot};y_1 - m_{\cdot,1},\ldots,y_k - m_{\cdot,k}) + m_{11}]\nonumber\\
&\ \quad - \mathbb{P}^\t(Q(t_1^*) = 0)\frac{x_1y_1}{n} + \mathbb{P}^\t(Q(t_1^*) = 0)\bar u^*_{11}(x_1,\ldots,x_k;y_1,\ldots,y_k)\nonumber\\
&\ = \sum_{M\in\mathcal{E}}\sum_{M'}\mathbb{P}^\t(Q(t_1^*) = M,Q'(t_1^*) = M')\label{gugugdat}\\
&\quad\qquad\qquad\times[H(x_1 - m_{1,\cdot},\ldots,x_k - m_{k,\cdot};y_1 - m_{\cdot,1},\ldots,y_k - m_{\cdot,k}) + m_{11}]\nonumber\\
&\ \quad - \mathbb{P}^\t(Q(t_1^*) = 0)\frac{x_1y_1}{n} + \mathbb{P}^\t(Q(t_1^*) = 0)\bar u^*_{11}(x_1,\ldots,x_k;y_1,\ldots,y_k)\nonumber\\
&\ \le \sum_{M\in\mathcal{E}}\sum_{M'}\mathbb{P}^\t(Q(t_1^*) = M, Q'(t_1^*) = M')\label{monotondat}\\
&\quad\qquad\qquad\times[H(x_1 - m'_{1,\cdot},\ldots,x_k - m'_{k,\cdot};y_1 - m'_{\cdot,1},\ldots,y_k - m'_{\cdot,k}) + m'_{11}]\nonumber\\
&\ \quad - \mathbb{P}^\t(Q(t_1^*) = 0)\frac{x_1y_1}{n} + \mathbb{P}^\t(Q(t_1^*) = 0)\bar u^*_{11}(x_1,\ldots,x_k;y_1,\ldots,y_k)\nonumber\\
&\ = \sum_{M'\in\mathcal{E}}\!\mathbb{P}^\t(Q'(t_1^*) = M')[H(x_1 - m'_{1,\cdot},\ldots,x_k - m'_{k,\cdot};y_1 - m'_{\cdot,1},\ldots,y_k - m'_{\cdot,k}) + m'_{11}]\label{metalldat}\\
&\ \quad - \mathbb{P}^\t(Q(t_1^*) = 0)\frac{x_1y_1}{n} + \mathbb{P}^\t(Q(t_1^*) = 0)\bar u^*_{11}(x_1,\ldots,x_k;y_1,\ldots,y_k)\nonumber\\
&\ = \frac{x_1y_1}{n}  - \mathbb{P}^\t(Q(t_1^*) = 0)\frac{x_1y_1}{n} + \mathbb{P}^\t(Q(t_1^*) = 0)\bar u^*_{11}(x_1,\ldots,x_k;y_1,\ldots,y_k)\label{biliyozdat}\\
&\ = \mathbb{P}^\t(Q(t_1^*) \neq 0)\frac{x_1y_1}{n} + \mathbb{P}^\t(Q(t_1^*) = 0)\bar u^*_{11}(x_1,\ldots,x_k;y_1,\ldots,y_k).\label{buyukb}
\end{align}
The inequality in (\ref{hipottdat}) follows from the induction hypothesis since $n-m_{tot} \le n-1$ for $M\ne 0$. (Technically, one should first condition on $\mathcal{L}(t^*_1)$, and then use the induction hypothesis with the shifted firing times $(t_s(\zeta) - t^*_1)_{s\ge1,\zeta\in S(t^*_1)}$ of the single animals.) $Q'(t_1^*)$ is the $k\times k$ matrix with $ij$th entry
$$Q'_{ij}(t_1^*) := \sum_{a=1}^n\sum_{b=1}^n\one_{\{(\varphi_a,\mu_b)\in (\Delta\mathcal{L})'(t_1^*),\, \Gamma(\varphi_a,\mu_b) = (i,j)\}}$$
which is the number of type-$ij$ temporary pairs formed at the first firing round. Since there is definite mating upon encounter for mixed-type temporary pairs, the second sum in (\ref{gugugdat}) is over all $M'\in\mathcal{E}$ such that $M'\ge M$ and $m'_{ij} = m_{ij}$ for every $i\neq j$. Lemma \ref{monoblokdat} with
$$x'_i = x_i - m_{i,\cdot},\quad y'_j = y_j - m_{\cdot,j}\quad\mbox{and}\quad c_i = m'_{ii} - m_{ii}$$
gives (\ref{monotondat}). Note that $M$ does not appear inside the square brackets in (\ref{monotondat}), so we can change the order of summation and obtain (\ref{metalldat}). Finally, the equality in (\ref{biliyozdat}) follows from Corollary \ref{korpan}. Indeed, in the case of definite mating upon encounter for all temporary pairs, we have $Q'(t^*_1) = Q(t^*_1)$ and
\begin{align*}
&u^*_{11}(x_1 - m'_{1,\cdot},\ldots,x_k - m'_{k,\cdot};y_1 - m'_{\cdot,1},\ldots,y_k - m'_{\cdot,k}\,|\,\t)\\
&\quad = H(x_1 - m'_{1,\cdot},\ldots,x_k - m'_{k,\cdot};y_1 - m'_{\cdot,1},\ldots,y_k - m'_{\cdot,k})
\end{align*}
for every $M'\in\mathcal{E}$ and $\mathbf{t}\in\Phi$. The decomposition of  $u^*_{11}(x_1,\ldots,x_k;y_1,\ldots,y_k\,|\,\t) = \frac{x_1y_1}{n}$ via conditioning on $Q(t^*_1)$ gives the desired equality.

For every $\epsilon>0$, there exists a $\t'\in\Phi$ such that
$$\bar u_{11}^*(x_1,\ldots,x_k;y_1,\ldots,y_k) \le u^*_{11}(x_1,\ldots,x_k;y_1,\ldots,y_k\,|\,\t') + \epsilon,$$
and (\ref{buyukb}) gives
$$\mathbb{P}^{\t'}(Q(t_1^*) \ne 0)\bar u_{11}^*(x_1,\ldots,x_k;y_1,\ldots,y_k) \le \mathbb{P}^{\t'}(Q(t_1^*) \neq 0)\frac{x_1y_1}{n} + \epsilon.$$ 
By Lemma \ref{ddeler}, we get
$$\bar u_{11}^*(x_1,\ldots,x_k;y_1,\ldots,y_k) \le \frac{x_1y_1}{n} + \frac{\epsilon}{\min\{p_{11},\ldots,p_{kk}\}}.$$
Since $\epsilon>0$ is arbitrary, we deduce that
\begin{equation}\label{timof}
\bar u_{11}^*(x_1,\ldots,x_k;y_1,\ldots,y_k) \le \frac{x_1y_1}{n}.
\end{equation}
This concludes the proof of part (a).

We prove part (b) by strong induction, too. The case $n=1$ is vacuous since $x_1x_iy_1y_i = 0$ for every $i\ne 1$. For $n\ge2$, note that if the supremum in (\ref{isimmisim}) is not attained, then (\ref{timof}) implies the desired strict inequality. If the supremum in (\ref{isimmisim}) is attained at some $\t'\in\Phi$, then we will assume without loss of generality that $x_1x_2y_1y_2 \ne 0$ and consider two subcases:
\begin{itemize}

\item [(i)] If no type-$1$ or type-$2$ animal fires at the first round, then there exists an $M\in\mathcal{E}$ such that $M \ne 0$,
\begin{equation}\label{lazimm}
(x_1 - m_{1,\cdot})(x_2 - m_{2,\cdot})(y_1 - m_{\cdot,1})(y_2 - m_{\cdot,2}) \ne 0,
\end{equation}
and $\mathbb{P}^{\t'}(Q(t_1^*) = M) > 0$. (Indeed, in the alternative representation of the SEM model, with positive probability, at least one type-$1$ (resp.\ type-$2$) female is paired up with a type-$1$ (resp.\ type-$2$) male. Since none of these animals fire at the first round, these pairs are discarded at the encounter stage.) Therefore, the inequality in (\ref{hipottdat}) is strict by the induction hypothesis.

\item [(ii)] If at least one type-$1$ or type-$2$ animal fires at the first round, then there exist $M,M'\in\mathcal{E}$ such that $m_{11} = m_{22} = 0$, $m'_{11} \vee m'_{22} \ne 0$, (\ref{lazimm}) holds, and $$\mathbb{P}^{\t'}(Q(t_1^*) = M,Q'(t_1^*) = M')>0.$$ (Indeed, with positive probability, at least one type-$1$ (resp.\ type-$2$) female is paired up with a type-$1$ (resp.\ type-$2$) male and each of these pairs is discarded at the encounter stage or the mating stage.) Therefore, the inequality in (\ref{monotondat}) is strict by Lemma \ref{monoblokdat}. 

\end{itemize}
In both of these subcases, we deduce that (\ref{timof}) holds with strict inequality. This concludes the proof of part (b).
\end{proof}

\begin{remark}
The key step in the proof of Theorem \ref{aaolurmu} is the inequality in (\ref{monotondat}) which follows from Lemma \ref{monoblokdat}. This lemma says that, in the case of definite mating upon encounter, the expected number of type-$11$ pairs in the terminal pair-list would decrease if we were to interfere with the process at any time and discard some pure-type pairs. This is precisely how we are viewing what happens (with positive probability) at the first firing round.
\end{remark}

\begin{remark}
It is perhaps natural to look for an easier proof of Theorem \ref{aaolurmu} via a coupling with definite mating upon encounter. However, there are some obstacles to such approaches. For example, consider the following setup: $k=2$, $n=3$, $x_1 = y_1 = 2$, $x_2 = y_2 = 1$, $\gamma_1(\varphi_1) = \gamma_1(\varphi_2) = \gamma_2(\mu_1) = \gamma_2(\mu_2) = 1$, and $\gamma_1(\varphi_3) = \gamma_2(\mu_3) = 2$. Suppose the firing times are ordered as $t_1(\varphi_1) < t_2(\varphi_1) < t_1(\mu_1) < t_1(\mu_2) < \cdots$ and $\varphi_1$ samples $\mu_1$ at $t_1^* = t_1(\varphi_1)$. In the case of definite mating upon encounter, if $\mu_2$ samples $\varphi_3$ at $t_2^* = t_1(\mu_2)$, then $Q_{11}(T) = 1$. However, in the case of definite mating upon encounter for mixed-type temporary pairs only, if (i) the pair $(\varphi_1,\mu_1)$ is discarded, (ii) $\varphi_1$ samples $\mu_2$ at $t_2^* = t_2(\varphi_1)$ and (iii) $\mu_1$ samples $\varphi_2$ at $t_3^* = t_1(\mu_1)$, then $Q_{11}(T) = 2 > 1$, i.e., the inequality is in the reverse direction.
\end{remark}

\begin{remark}
Theorem \ref{aaolurmu} can be slightly generalized. For example, if we assume that $k\ge3$, $p_{11}<1$, $p_{22}\le1$, $p_{ii}=1$ for every $i\in\{3,\ldots,k\}$, and $p_{ij}=1$ for every $i,j\in\{1,\ldots,k\}$ such that $i \ne j$, then
\begin{equation}\label{dbekir}
u_{11}^*(x_1,\ldots,x_k;y_1,\ldots,y_k\,|\,\t) \le \frac{x_1y_1}{n}
\end{equation}
still holds, and this inequality is strict when $p_{22}<1$ and $x_1x_2y_1y_2 \neq 0$. However, the inequality in (\ref{dbekir}) is not necessarily strict when $p_{22}=1$ and $x_1x_2y_1y_2 \neq 0$. As a counterexample, assume that all males fire at the first round. Since $p_{ij}<1$ if and only if $i=j=1$, there are only type-$1$ females and males in the singles' pool $S(t_1^*)$, and these animals have no option other than eventually forming type-$11$ permanent pairs. Therefore, the distribution of $Q(T)$ under $\mathbb{P}^\t$ is identical to what it was in the case of definite mating upon encounter for all temporary pairs. In particular, we have equality in (\ref{dbekir}).
\end{remark}

\section{Poisson firing times}\label{poissonsection}

\subsection{The infinitesimal generator}\label{poiinfsec}

In this section, we will make the following assumptions regarding the firing times of the animals.
\begin{enumerate}
\item [(Poi1)] $\{N(\zeta)\}_{\zeta\in Z}$ are mutually independent Poisson processes on $[0,\infty)$.
\item [(Poi2)] The intensity of $N(\zeta)$ is equal to some $\alpha_i\ge0$ (resp.\ $\beta_j\ge0$) for all type-$i$ females (resp.\ type-$j$ males).
\item [(Poi3)] $\alpha_i + \beta_j >0$ for every $i,j\in\{1,\ldots,k\}$.
\end{enumerate}
Note that (Poi1)--(Poi3) are stronger than (Gen1)--(Gen4) in Section \ref{generalsection}. Indeed, (Gen3) is automatically satisfied for Poisson processes, and (Poi3) implies (Gen4).

In this case, since $\{N(\zeta)\}_{\zeta\in Z}$ are memoryless, it is clear that the pair-type process $Q(\cdot)$ is a continuous-time pure jump Markov chain under $\mathbb{P}$, with state space $\mathcal{E}$ defined in (\ref{eeeh}). Moreover, the jumps of $Q_{ij}(\cdot)$ are of size $1$ because almost surely one animal fires at each firing round. Therefore, $Q(\cdot)$ may be regarded as a multidimensional pure birth process, see \cite{Mod62}.

\begin{proposition}\label{trafogg}
Assume (Poi1)--(Poi3). Take a population with $x_1,\ldots,x_k$ females and $y_1,\ldots,y_k$ males of types $1,\ldots,k$, respectively, such that (\ref{destur}) holds. The infinitesimal generator of the continuous-time Markov chain $Q(\cdot)$ has the following formula: for every $M,M'\in\mathcal{E}$,
\begin{align}
\rho(M,M') &:= \lim_{\Delta t\to 0^+}\frac1{\Delta t}\mathbb{P}\left(\left.Q(t+\Delta t) = M'\right|Q(t) = M\right)\nonumber\\
&\; = \left\{\begin{array}{ll}\frac{\pi_{ij}\left(x_i - m_{i,\cdot}\right)\left(y_j  - m_{\cdot,j}\right)}{n - m_{tot}}&\mbox{if}\ M' = M+I^{ij},\ \mbox{and}\\ 0 & \mbox{otherwise}.\end{array}\right.\label{taylanc}
\end{align}
Here, $I^{ij}\in\mathcal{M}^{k\times k}(\mathbb{N}\cup\{0\})$ denotes the $k\times k$ matrix whose entries are zero except its $ij$th entry which is $1$, and
\begin{equation}\label{pipoi}
\pi_{ij} := p_{ij}(\alpha_i + \beta_j).
\end{equation}
\end{proposition}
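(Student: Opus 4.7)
The plan is to derive the generator by the standard $\Delta t \to 0^+$ expansion, relying on (i) the memoryless property of Poisson processes and (ii) the exchangeability of same-sex same-type animals to justify that $Q(\cdot)$ is genuinely Markov on the count space $\mathcal{E}$. Fix a time $t$ and condition on $Q(t)=M$. By (Poi1)-(Poi2) and the memoryless property, the firing times of each currently single animal in the interval $[t,t+\Delta t)$ form a fresh Poisson process independent of the past, and thanks to (Gen2) (inherited here through (Poi2)) the singles of a given sex and type are exchangeable, so the full conditional law of $Q(t+\cdot)$ given the pre-$t$ history depends only on the singles counts $x_i - m_{i,\cdot}$ and $y_j - m_{\cdot,j}$.

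In the window $[t,t+\Delta t)$, the probability that two or more single animals fire is $O((\Delta t)^2)$ since the total rate is bounded. Hence up to $o(\Delta t)$, only single-firing events contribute, and every such event increments at most one entry $Q_{ij}(\cdot)$ by exactly one. This already shows $\rho(M,M')=0$ whenever $M'\neq M$ and $M' \neq M+I^{ij}$ for some $i,j$.

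To compute $\rho(M,M+I^{ij})$, I would split according to which sex fires. The probability that a specific single type-$i$ female fires exactly once in $[t,t+\Delta t)$ is $\alpha_i\Delta t + o(\Delta t)$; there are $x_i-m_{i,\cdot}$ such females; conditional on this firing, the alternative representation in Subsection~\ref{alternativerepsection} (or directly the sampling rule of Stage I) tells us she picks a single male uniformly from the pool of size $n-m_{tot}$, giving probability $(y_j-m_{\cdot,j})/(n-m_{tot})$ of selecting a type-$j$ male; the Bernoulli mating trial then succeeds with probability $p_{ij}$, independently of everything else. Summing these independent contributions over all single type-$i$ females yields a rate
\[
\alpha_i\,(x_i - m_{i,\cdot})\cdot\frac{y_j-m_{\cdot,j}}{n-m_{tot}}\cdot p_{ij}.
\]
The symmetric calculation for a type-$j$ male firing and sampling a single type-$i$ female contributes
\[
\beta_j\,(y_j-m_{\cdot,j})\cdot\frac{x_i-m_{i,\cdot}}{n-m_{tot}}\cdot p_{ij}.
\]

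Adding these two contributions and dividing by $\Delta t$ before letting $\Delta t\to 0^+$ produces the rate $p_{ij}(\alpha_i+\beta_j)(x_i-m_{i,\cdot})(y_j-m_{\cdot,j})/(n-m_{tot})$, which is exactly the right-hand side of \eqref{taylanc} with $\pi_{ij}$ defined by \eqref{pipoi}. The only delicate step is justifying that the two single-firing contributions simply add and do not double-count: this is fine because the event "female fires" and "male fires" in $[t,t+\Delta t)$ are disjoint up to an $o(\Delta t)$ event (both firing simultaneously has vanishing probability). I do not foresee a serious obstacle; the main care is bookkeeping the conditional sampling probabilities correctly and verifying that higher-order multiple-firing events contribute only $o(\Delta t)$.
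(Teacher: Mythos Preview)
Your argument is correct and follows essentially the same route as the paper's proof: condition on $Q(t)=M$, split according to whether a single type-$i$ female or a single type-$j$ male fires in $[t,t+\Delta t)$, use uniform sampling from the singles' pool together with the mating probability $p_{ij}$, and add the two contributions. You are somewhat more explicit than the paper about the Markov property (via memorylessness and exchangeability) and about why multiple firings contribute only $o(\Delta t)$, but the structure of the computation is identical.
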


\begin{proof}
Conditioned on $Q(t) = M$, there are $x_i - m_{i,\cdot}$ single type-$i$ females at time $t$ and each of them fires independently with probability $\alpha_i\Delta t + o(\Delta t)$ in $[t,t + \Delta t]$. Conditioned on the latter event, each single type-$i$ female samples a single type-$j$ male with probability $\frac{y_j  - m_{\cdot,j}}{n - m_{tot}}$. This is one of the two ways of forming a temporary type-$ij$ pair. The other way is obtained by switching the roles of females and males. Combining these two ways, we see that the conditional probability of forming a temporary type-$ij$ pair in $[t,t+\Delta t]$ given $Q(t) = M$ is
$$\frac{(\alpha_i\Delta t + \beta_j\Delta t + o(\Delta t))\left(x_i - m_{i,\cdot}\right)\left(y_j  - m_{\cdot,j}\right)}{n - m_{tot}}.$$
Finally, each temporary type-$ij$ pair becomes a permanent pair with probability $p_{ij}$, and this implies (\ref{taylanc}).
\end{proof}

\begin{remark}\label{degistira}
It follows readily from (\ref{taylanc}) that the distribution of the pair-type process $Q(\cdot)$ depends on $P = (p_{ij})$, $\alpha_1,\ldots,\alpha_k$ and $\beta_1,\ldots,\beta_k$ only through the $k\times k$ matrix $$\Pi := (\pi_{ij})$$ whose entries are defined in (\ref{pipoi}). In other words, we have the freedom to change these parameters as long as $\Pi$ stays the same. Therefore, under the assumption of Poisson firing times, the EM law of the species can be identified with $\Pi$.
\end{remark}

Since the pair-type process $Q(\cdot)$ is a time-homogeneous Markov chain with infinitesimal generator $\rho$ as in (\ref{taylanc}), the Kolmogorov backward equation for 
(\ref{tozert}) is
\begin{align}
&\frac{\partial}{\partial t}u_{ij}(t;x_1,\ldots,x_k;y_1,\ldots,y_k)\nonumber\\
&\quad = \sum_{i'=1}^k\sum_{j'=1}^k\frac{\pi_{i'j'}x_{i'}y_{j'}}{n}\left[u_{ij}(t;x_1-\delta_{1i'},\ldots,x_k-\delta_{ki'};y_1-\delta_{1j'},\ldots,y_k-\delta_{kj'}) + \delta_{ii'}\delta_{jj'}\right]\label{yonetm}\\
&\quad\qquad - \frac{z}{n}u_{ij}(t;x_1,\ldots,x_k;y_1,\ldots,y_k).\nonumber
\end{align}
Here, $\delta_{\cdot\cdot}$ denotes the Kronecker delta function, and $$z := \sum_{i=1}^k\sum_{j=1}^k\pi_{ij}x_{i}y_{j}.$$
Note that
(\ref{yonetm}) is a closed system of recursive ordinary differential equations.

Similarly, using the strong Markov property, we can decompose 
(\ref{bayankut}) with respect to the possible values of $Q(t_1^*(\tau))$ and thereby obtain the following recursion:
\begin{align}
&u^*_{ij}(x_1,\ldots,x_k;y_1,\ldots,y_k)\nonumber\\
&\quad = \sum_{i'=1}^k\sum_{j'=1}^k\frac{\pi_{i'j'}x_{i'}y_{j'}}{z}\left[u^*_{ij}(x_1 - \delta_{1i'},\ldots,x_k - \delta_{ki'};y_1 - \delta_{1j'},\ldots,y_k - \delta_{kj'}) + \delta_{ii'}\delta_{jj'}\right].\label{karaosman}
\end{align}
This was previously given by Mosteller \cite{Mos68}.

The recursive equations (\ref{yonetm}) and  (\ref{karaosman}) can be solved numerically, especially when $n$ is not too large. However, it is not clear if they can be solved analytically. Next, we will provide explicit solutions to these equations when the EM law $\Pi$ satisfies the so-called Poisson fine balance condition.

\subsection{Poisson fine balance and panmixia}\label{poifinesec}

\begin{definition}\label{poifinedef}
The EM law $\Pi$ is said to satisfy the Poisson fine balance condition if
\begin{equation}\label{fine}
\pi_{ij} + \pi_{i'j'} = \pi_{ij'} + \pi_{i'j}
\end{equation}
for every $i,j,i',j'\in\{1,\ldots,k\}$.
\end{definition}

The motivation behind this definition is the following observation.

\begin{proposition}\label{mulug}
Assume (Poi1)--(Poi3). If the species is panmictic, then the EM law $\Pi$ satisfies the Poisson fine balance condition.
\end{proposition}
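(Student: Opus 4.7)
The plan is to exploit panmixia on the smallest nontrivial population that involves four prescribed pairs of types. Fix $i,j,i',j' \in \{1,\ldots,k\}$. If $i=i'$ or $j=j'$ then both sides of (\ref{fine}) are trivially equal, so I may assume $i\neq i'$ and $j\neq j'$. I consider the population with $x_i=x_{i'}=y_j=y_{j'}=1$ and all other type counts equal to zero, so that $n=2$. Panmixia for the species then forces
\[
u^*_{ij}(x_1,\ldots,x_k;y_1,\ldots,y_k)=\frac{x_iy_j}{n}=\frac{1}{2},
\]
and similarly for the other three type combinations.

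Next I would enumerate possible mating patterns. Because $n=2$, the state space $\mathcal{E}'$ consists of exactly the two contingency tables
\[
M_1 := I^{ij}+I^{i'j'}, \qquad M_2 := I^{ij'}+I^{i'j}.
\]
Once the first permanent pair is formed, the remaining single female and single male have uniquely determined types, so the second pair is deterministic. Consequently $Q(T)=M_1$ iff the first pair formed is of type $ij$ or of type $i'j'$, and $Q(T)=M_2$ iff it is of type $ij'$ or $i'j$.

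Now I would compute these probabilities using Proposition \ref{trafogg}. Starting from $Q(0)=0$, the jump rate to $I^{ab}$ is $\rho(0,I^{ab})=\pi_{ab}\cdot 1\cdot 1/2=\pi_{ab}/2$ for $(a,b)\in\{i,i'\}\times\{j,j'\}$, so the probability that the first jump lands on $I^{ab}$ is $\pi_{ab}/(\pi_{ij}+\pi_{ij'}+\pi_{i'j}+\pi_{i'j'})$. (Equivalently, one may read this off directly from the recursion (\ref{karaosman}) applied to the $n=2$ population.) Therefore
\[
\mathbb{P}(Q(T)=M_1)=\frac{\pi_{ij}+\pi_{i'j'}}{\pi_{ij}+\pi_{ij'}+\pi_{i'j}+\pi_{i'j'}},
\]
and since $Q_{ij}(T)=\mathbf{1}_{\{Q(T)=M_1\}}$ in this population, panmixia $u^*_{ij}=1/2$ gives
\[
\frac{\pi_{ij}+\pi_{i'j'}}{\pi_{ij}+\pi_{ij'}+\pi_{i'j}+\pi_{i'j'}}=\frac{1}{2}.
\]
Cross-multiplying yields $\pi_{ij}+\pi_{i'j'}=\pi_{ij'}+\pi_{i'j}$, which is exactly (\ref{fine}).

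There is no real obstacle here; the only subtlety is verifying that the definition of panmixia in Subsection \ref{lagalug} permits populations in which several type-counts are zero (it does, since only $n\ge 1$ is required), so that the auxiliary two-type, two-animals-per-sex population is admissible. Everything else reduces to the single jump-rate computation above.
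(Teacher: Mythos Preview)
Your proof is correct and follows essentially the same approach as the paper: both exploit panmixia on the $n=2$ population with exactly one female each of types $i,i'$ and one male each of types $j,j'$, then use the recursion (\ref{karaosman}) (equivalently, the jump rates from Proposition \ref{trafogg}) to compute $u^*_{ij}=1/2$ and deduce (\ref{fine}). The only cosmetic difference is that the paper does the case $i=j=1$, $i'=j'=2$ explicitly and then relabels, whereas you work directly with general indices.
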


\begin{proof}
Assume that the species is panmictic. Observe that (\ref{fine}) is trivial if $i=i'$ or $j=j'$. Take a population with $n=2$ and $x_1 = x_2 = y_1 = y_2 = 1$. Use recursion (\ref{karaosman}) to write
\begin{align*}
\frac1{2} &= \frac{x_1y_1}{n} = u^*_{11}(x_1,\ldots,x_k;y_1,\ldots,y_k)\\
&= \sum_{i=1}^k\sum_{j=1}^k\frac{\pi_{ij}x_{i}y_{j}}{z}\left[u^*_{11}(x_1 - \delta_{1i},\ldots,x_k - \delta_{ki};y_1 - \delta_{1j},\ldots,y_k - \delta_{kj}) + \delta_{1i}\delta_{1j}\right]\\
&= \frac1{z}\left(\pi_{11}[0+1] + \pi_{12}[0+0] + \pi_{21}[0+0] + \pi_{22}[1+0]\right)\\
&= \frac{\pi_{11} + \pi_{22}}{\pi_{11} + \pi_{12} + \pi_{21} + \pi_{22}}.
\end{align*}
Therefore, $\pi_{11} + \pi_{22} = \pi_{12} + \pi_{21}$. This is (\ref{fine}) with $i=j=1$ and $i'=j'=2$. By relabeling the types, the same argument works for any $i,j,i',j'\in\{1,\ldots,k\}$ such that $i\neq i'$ and $j\neq j'$, and we are done.
\end{proof}

It is natural to ask if the converse is true, i.e., whether the species is panmictic whenever the EM law satisfies the Poisson fine balance condition. This turns out to be true and can be verified by induction and (\ref{karaosman}). However, we will prove it by taking a much more conceptual approach. First, we need a lemma.

\begin{lemma}\label{propjoe}
The EM law $\Pi$ satisfies the Poisson fine balance condition if and only if there exist $\bar\alpha_1,\ldots,\bar\alpha_k,\bar\beta_1,\ldots,\bar\beta_k\ge0$ such that
\begin{equation}\label{thanks}
\pi_{ij} = \bar\alpha_i + \bar\beta_j
\end{equation}
for every $i,j\in\{1,\ldots,k\}$.
\end{lemma}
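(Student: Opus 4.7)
The statement is an ``if and only if'', so I would prove the two directions separately.

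The ``if'' direction is immediate: if $\pi_{ij} = \bar\alpha_i + \bar\beta_j$ for all $i,j$, then
$$\pi_{ij} + \pi_{i'j'} = \bar\alpha_i + \bar\beta_j + \bar\alpha_{i'} + \bar\beta_{j'} = (\bar\alpha_i + \bar\beta_{j'}) + (\bar\alpha_{i'} + \bar\beta_j) = \pi_{ij'} + \pi_{i'j},$$
so (\ref{fine}) holds.

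For the ``only if'' direction, I would first rewrite (\ref{fine}) in the specialized form $\pi_{ij} + \pi_{11} = \pi_{i1} + \pi_{1j}$ (obtained by setting $i'=1$, $j'=1$), which yields the identity
$$\pi_{ij} = \pi_{i1} + \pi_{1j} - \pi_{11}$$
for every $i,j\in\{1,\ldots,k\}$. This suggests the candidate decomposition $\bar\alpha_i := \pi_{i1} - c$ and $\bar\beta_j := \pi_{1j} - \pi_{11} + c$ for some constant $c\in\mathbb{R}$; note that $\bar\alpha_i + \bar\beta_j = \pi_{ij}$ for any such $c$, so only the nonnegativity constraint has substance.

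The remaining task, which I expect to be the only nontrivial step, is to verify that $c$ can be chosen so that $\bar\alpha_i\ge0$ and $\bar\beta_j\ge0$ simultaneously. The former requires $c\le\min_i\pi_{i1}$, and the latter requires $c\ge\pi_{11}-\min_j\pi_{1j}$, so I need to show
$$\pi_{11} - \min_j\pi_{1j} \;\le\; \min_i\pi_{i1},$$
i.e.\ $\pi_{11}\le \pi_{i1} + \pi_{1j}$ for every $i,j$. Invoking (\ref{fine}) once more (with $i'=i$, $j'=j$) gives $\pi_{i1}+\pi_{1j} = \pi_{11}+\pi_{ij}$, and since all $\pi_{ij}\ge0$ (recall (\ref{pipoi}) and the standing positivity assumptions on $p_{ij}$, $\alpha_i+\beta_j$), this bound holds. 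Then, for instance, the concrete choice $c := \pi_{11} - \min_j\pi_{1j}$ produces $\bar\alpha_i = \pi_{i1}+\min_j\pi_{1j}-\pi_{11} = \min_j\pi_{ij}\ge0$ and $\bar\beta_j = \pi_{1j}-\min_{j'}\pi_{1j'}\ge0$, completing the construction.
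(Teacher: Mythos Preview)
Your proof is correct and follows essentially the same approach as the paper's. The only cosmetic difference is that the paper first relabels the male types so that $\pi_{11}=\min_j\pi_{1j}$ and then simply sets $\bar\alpha_i:=\pi_{i1}$, $\bar\beta_j:=\pi_{1j}-\pi_{11}$; this is exactly your construction with the particular choice $c=\pi_{11}-\min_j\pi_{1j}$ (which becomes $c=0$ after the relabeling), so your parameter analysis and the paper's relabeling trick are two presentations of the same idea.
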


\begin{proof}
Assume that (\ref{thanks}) indeed holds with some $\bar\alpha_1,\ldots,\bar\alpha_k,\bar\beta_1,\ldots,\bar\beta_k\ge0$. Then,
$$\pi_{ij} + \pi_{i'j'} = (\bar\alpha_i + \bar\beta_j) + (\bar\alpha_{i'} + \bar\beta_{j'}) =  (\bar\alpha_i + \bar\beta_{j'}) + (\bar\alpha_{i'} + \bar\beta_{j}) = \pi_{ij'} + \pi_{i'j}$$
for every  $i,j,i',j'\in\{1,\ldots,k\}$, i.e., the Poisson fine balance condition is satisfied.

Conversely, if the Poisson fine balance condition holds, then assume without loss of generality (i.e., up to relabeling the male types) that $\pi_{11} = \min\{\pi_{11},\ldots,\pi_{1k}\}$. Define $$\bar\alpha_i := \pi_{i1}>0\quad\mbox{and}\quad \bar\beta_j := \pi_{1j} - \pi_{11}\ge 0.$$
With this notation, we have
$$\pi_{ij} = \pi_{i1} + \pi_{1j} - \pi_{11} = \bar\alpha_i + \bar\beta_j,$$
where the first equality follows from (\ref{fine}). This concludes the proof.
\end{proof}

\begin{theorem}\label{yumosoglu}
Assume (Poi1)--(Poi3). Take a population with $x_1,\ldots,x_k$ females and $y_1,\ldots,y_k$ males of types $1,\ldots,k$, respectively, such that (\ref{destur}) holds. If the EM law $\Pi$ satisfies the Poisson fine balance condition, then
\begin{equation}\label{cikolata1}
\mathbb{P}\left(Q(t) = M\right) = \frac{\Big(\prod_{i} x_i!\Big)\left(\prod_{j} y_j!\right)}{n!\left(\prod_{i,j}m_{ij}!\right)}\sum_{\substack{M'\in\mathcal{E}':\\M'\ge M}}\prod_{i,j}\frac{\left(1-e^{-\pi_{ij}t}\right)^{m_{ij}}\left(e^{-\pi_{ij}t}\right)^{m'_{ij}-m_{ij}}}{(m'_{ij} - m_{ij})!}
\end{equation}
for every $t\ge0$ and $M\in\mathcal{E}$. Similarly,
\begin{align}
\mathbb{P}\left(Q(T) = M\right) &= \frac{\Big(\prod_{i} x_i!\Big)\left(\prod_{j} y_j!\right)}{n!\left(\prod_{i,j}m_{ij}!\right)}\quad\text{for $M\in\mathcal{E}'$},\label{cikolata3}\\
u^*_{ij}(x_1,\ldots,x_k;y_1,\ldots,y_k) &= \frac{x_iy_j}{n},\quad\mbox{and}\label{cikolata4}\\
u_{ij}(t;x_1,\ldots,x_k;y_1,\ldots,y_k) &= \frac{x_iy_j}{n}\left(1-e^{-\pi_{ij}t}\right)\label{cikolata2}.
\end{align}
\end{theorem}

\begin{proof}
If the EM law satisfies the Poisson fine balance condition, then Lemma \ref{propjoe} implies that (\ref{thanks}) holds with some $\bar\alpha_1,\ldots,\bar\alpha_k,\bar\beta_1,\ldots,\bar\beta_k\ge0$. In the light of Remark \ref{degistira}, as far as the distribution of the pair-type process $Q(\cdot)$ is concerned, we can change (if necessary) $P = (p_{ij})$, $\alpha_1,\ldots,\alpha_k$ and $\beta_1,\ldots,\beta_k$, and
assume that $$p_{ij} = 1,\quad\alpha_i = \bar\alpha_i\quad\mbox{and}\quad\beta_j = \bar\beta_j$$
for every $i,j\in\{1,\ldots,k\}$. But then, we have definite mating upon encounter and Theorem \ref{kungfu} is applicable. Since $\tau_1(\zeta)$ is exponentially distributed with rate $\bar\alpha_i$ (resp.\ $\bar\beta_j$) if $\zeta$ is a type-$i$ female (resp.\ type-$j$ male), we have
$$\lambda_{ij}(t) = 1 - (1 - F_i(t))(1 - G_j(t)) = 1 - e^{-\bar\alpha_it}e^{-\bar\beta_jt} = 1 - e^{-\pi_{ij}t}$$
and this implies (\ref{cikolata1}). Similarly, Corollaries \ref{korcount}, \ref{korpan} and \ref{aikido} give (\ref{cikolata3}), (\ref{cikolata4}) and (\ref{cikolata2}), respectively.
\end{proof}

\begin{corollary}\label{karaks}
Assume (Poi1)--(Poi3). The species is panmictic if and only if the EM law $\Pi$ satisfies the Poisson fine balance condition.
\end{corollary}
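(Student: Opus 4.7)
The corollary is a near-immediate consequence of two previously established results, so the proof plan is essentially bookkeeping rather than new argument. I would simply combine the two directions already proved.

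For the ``only if'' direction, I would invoke Proposition \ref{mulug} verbatim: it states that if the species is panmictic, then $\Pi$ satisfies the Poisson fine balance condition. Nothing more is needed here.

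For the ``if'' direction, I would invoke Theorem \ref{yumosoglu}. More specifically, equation (\ref{cikolata4}) in that theorem asserts that whenever $\Pi$ satisfies the Poisson fine balance condition, we have
\[
u^*_{ij}(x_1,\ldots,x_k;y_1,\ldots,y_k) = \frac{x_iy_j}{n}
\]
for every $i,j\in\{1,\ldots,k\}$ and every admissible choice of $x_1,\ldots,x_k,y_1,\ldots,y_k\ge0$ satisfying (\ref{destur}). Since this is precisely the definition of panmixia for the species (it must hold for every population), the conclusion is immediate.

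There is no serious obstacle. The only subtlety worth flagging is that the definition of a panmictic \emph{species} requires the identity to hold for \emph{every} population, and Theorem \ref{yumosoglu} indeed delivers this uniformity in $(x_1,\ldots,x_k;y_1,\ldots,y_k)$, so the two statements match exactly. Thus the proof reduces to citing Proposition \ref{mulug} in one direction and formula (\ref{cikolata4}) of Theorem \ref{yumosoglu} in the other.
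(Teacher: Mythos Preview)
Your proposal is correct and matches the paper's own proof essentially verbatim: the paper simply says the corollary is immediate from Proposition~\ref{mulug} and Theorem~\ref{yumosoglu}, which is exactly the two-direction citation you give.
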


\begin{proof}
This is immediate from Proposition \ref{mulug} and Theorem \ref{yumosoglu}.
\end{proof}

\subsection{Full characterization of panmixia, homogamy and heterogamy in the $2\times 2$ case}\label{poicharsec}

\begin{lemma}\label{propjeff}
For $k=2$, the EM law $\Pi$ satisfies $\pi_{11} + \pi_{22} < \pi_{12} + \pi_{21}$ if and only if there exist $\bar\alpha_1,\bar\alpha_2,\bar\beta_1,\bar\beta_2\ge0$ such that
\begin{equation}\label{thankff}
\pi_{11} < \bar\alpha_1 + \bar\beta_1,\quad \pi_{12} = \bar\alpha_1 + \bar\beta_2,\quad \pi_{21} = \bar\alpha_2 + \bar\beta_1\quad\mbox{and}\quad \pi_{22} < \bar\alpha_2 + \bar\beta_2.
\end{equation}
\end{lemma}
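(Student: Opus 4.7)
\textbf{Proof proposal for Lemma \ref{propjeff}.}

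The ``if'' direction is a single line: adding the two equalities and using the two strict inequalities in (\ref{thankff}) yields
\[
\pi_{11}+\pi_{22} < (\bar\alpha_1+\bar\beta_1)+(\bar\alpha_2+\bar\beta_2) = (\bar\alpha_1+\bar\beta_2)+(\bar\alpha_2+\bar\beta_1) = \pi_{12}+\pi_{21},
\]
exactly as in the first half of the proof of Lemma \ref{propjoe}.

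For the ``only if'' direction, my plan is to parameterize the search and reduce it to a one-dimensional interval question. Given the two equality constraints $\pi_{12}=\bar\alpha_1+\bar\beta_2$ and $\pi_{21}=\bar\alpha_2+\bar\beta_1$, I set $\bar\alpha_1 := \pi_{12}-\bar\beta_2$ and $\bar\alpha_2 := \pi_{21}-\bar\beta_1$, so the equalities hold automatically and non-negativity of $\bar\alpha_1,\bar\alpha_2$ becomes $\bar\beta_2 \le \pi_{12}$ and $\bar\beta_1 \le \pi_{21}$. Substituting into the two strict inequalities gives
\[
\pi_{11} < \pi_{12} + (\bar\beta_1 - \bar\beta_2), \qquad \pi_{22} < \pi_{21} - (\bar\beta_1 - \bar\beta_2),
\]
which together read $\pi_{11}-\pi_{12} < D < \pi_{21}-\pi_{22}$ for $D := \bar\beta_1-\bar\beta_2$. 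This open interval is non-empty precisely by the hypothesis $\pi_{11}+\pi_{22}<\pi_{12}+\pi_{21}$.

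It remains to choose $\bar\beta_1,\bar\beta_2\ge 0$ with $\bar\beta_1\le\pi_{21}$, $\bar\beta_2\le\pi_{12}$ and $\bar\beta_1-\bar\beta_2 = D$ for some $D$ in that interval. Using $\pi_{11},\pi_{22}>0$ (which follows from $p_{ij}>0$ together with (Poi3)), I note that $-\pi_{12} < \pi_{11}-\pi_{12}$ and $\pi_{21}-\pi_{22} < \pi_{21}$, so the interval $(\pi_{11}-\pi_{12},\pi_{21}-\pi_{22})$ is contained in $(-\pi_{12},\pi_{21})$. Pick any $D$ in the former, and set $(\bar\beta_1,\bar\beta_2):=(D,0)$ if $D\ge 0$ or $(0,-D)$ if $D<0$; in either case $\bar\beta_1\in[0,\pi_{21})$ and $\bar\beta_2\in[0,\pi_{12})$, so all four quantities are non-negative and the required equalities and inequalities hold.

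There is no real obstacle here: the lemma is a small piece of two-dimensional bookkeeping, and the only point one has to take care of is using $\pi_{11},\pi_{22}>0$ to make sure that the open interval of admissible values of $D$ actually intersects the rectangle of admissible $(\bar\beta_1,\bar\beta_2)$. This mirrors the construction in Lemma \ref{propjoe}, where the corresponding choice was $\bar\alpha_i=\pi_{i1}$, $\bar\beta_j=\pi_{1j}-\pi_{11}$; here, since we only have strict rather than equality constraints on $\pi_{11}$ and $\pi_{22}$, a two-parameter family of witnesses is available.
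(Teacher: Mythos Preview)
Your proof is correct. The ``if'' direction matches the paper verbatim. For the ``only if'' direction, however, you take a different route from the paper: whereas the paper simply picks a matrix $\bar\Pi$ with $\bar\pi_{12}=\pi_{12}$, $\bar\pi_{21}=\pi_{21}$, $\bar\pi_{11}>\pi_{11}$, $\bar\pi_{22}>\pi_{22}$ and $\bar\pi_{11}+\bar\pi_{22}=\bar\pi_{12}+\bar\pi_{21}$, and then applies Lemma~\ref{propjoe} to $\bar\Pi$ to produce the $\bar\alpha_i,\bar\beta_j$, you instead solve for $\bar\alpha_1,\bar\alpha_2$ in terms of $\bar\beta_1,\bar\beta_2$ and reduce the remaining constraints to the one-dimensional condition $\pi_{11}-\pi_{12}<D<\pi_{21}-\pi_{22}$ on $D=\bar\beta_1-\bar\beta_2$, then exhibit an explicit $(\bar\beta_1,\bar\beta_2)$ realizing such a $D$ within the box $[0,\pi_{21}]\times[0,\pi_{12}]$. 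The paper's argument is shorter and more modular (it recycles Lemma~\ref{propjoe}), while yours is self-contained and makes the role of the positivity $\pi_{11},\pi_{22}>0$ (from $p_{ij}>0$ and (Poi3)) fully explicit --- a point the paper leaves implicit in the ``pick $\bar\Pi$'' step. Both approaches are equally valid for this elementary lemma.
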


\begin{proof}
Assume that (\ref{thankff}) indeed holds with some $\bar\alpha_1,\bar\alpha_2,\bar\beta_1,\bar\beta_2\ge0$. Then,
$$\pi_{11} + \pi_{22} < (\bar\alpha_1 + \bar\beta_1) + (\bar\alpha_2 + \bar\beta_2) =  (\bar\alpha_1 + \bar\beta_2) + (\bar\alpha_2 + \bar\beta_1) = \pi_{12} + \pi_{21}.$$

Conversely, if $\pi_{11} + \pi_{22} < \pi_{12} + \pi_{21}$, then pick $\bar\Pi = (\bar\pi_{ij})$ such that
$$\pi_{11} < \bar\pi_{11},\quad\pi_{12} = \bar\pi_{12},\quad\pi_{21} = \bar\pi_{21},\quad\pi_{22} < \bar\pi_{22}\quad\mbox{and}\quad\bar\pi_{11} + \bar\pi_{22} = \bar\pi_{12} + \bar\pi_{21}.$$
The desired result follows from Lemma \ref{propjoe} applied to $\bar\Pi$.
\end{proof}

\begin{theorem}\label{fullpull}
Assume (Poi1)--(Poi3). For $k=2$, the species is
\begin{itemize}
\item [(i)] heterogamous if\quad $\pi_{11} + \pi_{22} < \pi_{12} + \pi_{21}$,
\item [(ii)] panmictic if\qquad\ \;$\pi_{11} + \pi_{22} = \pi_{12} + \pi_{21}$, and
\item [(iii)] homogamous if\quad\ $\pi_{11} + \pi_{22} > \pi_{12} + \pi_{21}$.
\end{itemize}
\end{theorem}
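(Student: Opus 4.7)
The overall plan is to handle the three cases separately, noting that case (ii) is already essentially in hand: by Corollary \ref{karaks}, panmixia is equivalent to the Poisson fine balance condition (\ref{fine}), which for $k=2$ collapses to the single equation $\pi_{11}+\pi_{22}=\pi_{12}+\pi_{21}$. Thus the real work is to establish (i) and (iii), and both should follow by combining Lemma \ref{propjeff}, the change-of-parameters freedom from Remark \ref{degistira}, and Theorem \ref{aaolurmu}.

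For case (i), assume $\pi_{11}+\pi_{22}<\pi_{12}+\pi_{21}$ and use Lemma \ref{propjeff} to pick $\bar\alpha_1,\bar\alpha_2,\bar\beta_1,\bar\beta_2\ge 0$ satisfying (\ref{thankff}). Set $\bar p_{ij}:=\pi_{ij}/(\bar\alpha_i+\bar\beta_j)$; the equalities in (\ref{thankff}) force $\bar p_{12}=\bar p_{21}=1$, while the strict inequalities force $\bar p_{11},\bar p_{22}\in(0,1)$. The denominators are positive: $\bar\alpha_1+\bar\beta_2=\pi_{12}>0$, $\bar\alpha_2+\bar\beta_1=\pi_{21}>0$, and $\bar\alpha_i+\bar\beta_i>\pi_{ii}>0$, so (Poi3) still holds for the new parameter triple $(\bar p_{ij},\bar\alpha_i,\bar\beta_j)$. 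Since $\bar p_{ij}(\bar\alpha_i+\bar\beta_j)=\pi_{ij}$, Remark \ref{degistira} tells us that $Q(\cdot)$ has the same distribution under the new EM law as under the original one; in particular $u^*_{11}$ is unchanged. The new law fits the hypotheses of Theorem \ref{aaolurmu} exactly, and part (b) applied with $i=1$, $i'=2$ yields $u^*_{11}(x_1,x_2;y_1,y_2)<x_1y_1/n$ under the standing assumption $x_1x_2y_1y_2\neq 0$ from Definition \ref{homhet}. This is heterogamy.

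For case (iii), assume $\pi_{11}+\pi_{22}>\pi_{12}+\pi_{21}$ and reduce to case (i) by relabeling the male types $1\leftrightarrow 2$. Concretely, pass to a tilde-system with $\tilde x_i:=x_i$, $\tilde y_j:=y_{3-j}$, $\tilde\alpha_i:=\alpha_i$, $\tilde\beta_j:=\beta_{3-j}$, $\tilde p_{ij}:=p_{i,3-j}$; this still satisfies (Poi1)--(Poi3) and yields $\tilde\pi_{ij}=\pi_{i,3-j}$, hence $\tilde\pi_{11}+\tilde\pi_{22}=\pi_{12}+\pi_{21}<\pi_{11}+\pi_{22}=\tilde\pi_{12}+\tilde\pi_{21}$. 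By case (i), $\tilde u^*_{11}(\tilde x_1,\tilde x_2;\tilde y_1,\tilde y_2)<\tilde x_1\tilde y_1/n$. Translating back, a tilde-type-$11$ pair is an original type-$12$ pair and $\tilde y_1=y_2$, so the inequality reads $u^*_{12}(x_1,x_2;y_1,y_2)<x_1y_2/n$. Combining with the row-sum identity $u^*_{11}+u^*_{12}=x_1=x_1(y_1+y_2)/n$ rearranges to $u^*_{11}(x_1,x_2;y_1,y_2)>x_1y_1/n$, which is homogamy.

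I do not anticipate a substantial obstacle. The one spot requiring genuine care is the verification in case (i) that the tweaked parameters still satisfy (Poi3) and $\bar p_{ij}\in(0,1]$, which is ensured precisely by the mixed strict/equality pattern in (\ref{thankff}); everything else is bookkeeping. The conceptual content is entirely packaged inside Lemma \ref{propjeff} and Theorem \ref{aaolurmu}, and the argument above simply glues them together via the change-of-parameters device and the obvious male-type symmetry.
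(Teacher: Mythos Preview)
Your proof is correct and follows essentially the same route as the paper: case (ii) via Corollary \ref{karaks}, case (i) via Lemma \ref{propjeff} together with the change-of-parameters device (Remark \ref{degistira}) and Theorem \ref{aaolurmu}, and case (iii) by relabeling the male types. You are in fact more explicit than the paper in verifying that the new parameters satisfy (Poi3) and $\bar p_{ij}\in(0,1]$, and in unwinding the relabeling for case (iii) via the row-sum identity; the paper compresses both of these into single sentences.
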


\begin{proof}
Let us prove part (i). If $\pi_{11} + \pi_{22} < \pi_{12} + \pi_{21}$, then Lemma \ref{propjeff} implies that (\ref{thankff}) holds with some $\bar\alpha_1,\bar\alpha_2,\bar\beta_1,\bar\beta_2\ge0$. In the light of Remark \ref{degistira}, as far as the distribution of the pair-type process $Q(\cdot)$ is concerned, we can change (if necessary) $P = (p_{ij})$, $\alpha_1,\alpha_2$ and $\beta_1,\beta_2$, and
assume that $$p_{11} < 1,\quad p_{12} = 1,\quad p_{21} = 1,\quad p_{22} < 1,\quad\alpha_i = \bar\alpha_i\quad\mbox{and}\quad\beta_j = \bar\beta_j$$
for every $i,j\in\{1,2\}$. But then, we have definite mating upon encounter for mixed-type temporary pairs only, and Theorem \ref{aaolurmu} is applicable. This concludes the proof of part (i). We have already shown part (ii) in Corollary \ref{karaks}. Finally, part (iii) follows from part (i) by relabeling the male types.
\end{proof}

\section{Bernoulli firing times}\label{bernoullisection}

\subsection{The transition kernel}

In this section, we will make the following assumptions.
\begin{enumerate}
\item [(Ber1)] $\{N(\zeta)\}_{\zeta\in Z}$ are mutually independent Bernoulli processes on $\mathbb{N} = \{1,2,3,\ldots\}$.
\item [(Ber2)] The success probability of the Bernoulli trials of $N(\zeta)$ is equal to some $\alpha_i\in[0,1]$ (resp.\ $\beta_j\in[0,1]$) for all type-$i$ females (resp.\ type-$j$ males).
\item [(Ber3)] $\alpha_i + \beta_j >0$ for every $i,j\in\{1,\ldots,k\}$.
\end{enumerate}
Note that (Ber1)--(Ber3) are stronger than (Gen1)--(Gen4) in Section \ref{generalsection}. Indeed, (Gen3) is automatically satisfied as the Bernoulli processes are defined on $\mathbb{N}$, and (Ber3) implies (Gen4).

Similar to the Poisson case, since $\{N(\zeta)\}_{\zeta\in Z}$ are memoryless, it is clear that the pair-type process $Q(\cdot)$ is a discrete-time Markov chain under $\mathbb{P}$, with state space $\mathcal{E}$ defined in (\ref{eeeh}). However, the jumps of $Q_{ij}(\cdot)$ are not necessarily of size $1$ because more than one animal can fire at each firing round.

\begin{proposition}\label{trafoggp}
Assume (Ber1)--(Ber3). Take a population with $x_1,\ldots,x_k$ females and $y_1,\ldots,y_k$ males of types $1,\ldots,k$, respectively, such that (\ref{destur}) holds. The transition kernel of the discrete-time Markov chain $Q(\cdot)$ has the following formula: for every $M,M'\in\mathcal{E}$,
\begin{align}
\rho(M,M') &:= \mathbb{P}\left(\left.Q(t+1) = M'\,\right|\,Q(t) = M\right)\nonumber\\
&\; = \frac{\Big(\prod_{i} (x_i - m_{i,\cdot})!\Big)\left(\prod_{j} (y_j - m_{\cdot,j})!\right)}{(n - m_{tot})!\left(\prod_{i,j}(m'_{ij} - m_{ij})!\right)}\sum_{\substack{M''\in\mathcal{E}':\\M''\ge M'}}\prod_{i,j}\frac{\left(\pi_{ij}\right)^{m'_{ij} - m_{ij}}\left(1-\pi_{ij}\right)^{m''_{ij}-m'_{ij}}}{(m''_{ij} - m'_{ij})!}\label{tayland}
\end{align}
if $M'\ge M$, and $0$ otherwise. Here,
\begin{equation}\label{piber}
\pi_{ij} := p_{ij}(\alpha_i + \beta_j - \alpha_i\beta_j)\in(0,1].
\end{equation}
\end{proposition}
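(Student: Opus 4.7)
The plan is to use the alternative representation of the SEM model from Subsection~\ref{alternativerepsection}, adapted to the Bernoulli setting. I condition on $Q(t) = M$, so that at time $t$ the singles' pool contains $x_i - m_{i,\cdot}$ type-$i$ females and $y_j - m_{\cdot,j}$ type-$j$ males, for a total of $n - m_{tot}$ animals of each sex. At the (discrete) firing round $t+1$, the alternative representation proceeds in three steps: (a) pair the remaining singles uniformly at random into an admissible ``shadow'' list of $n - m_{tot}$ pairs; (b) keep only those shadow pairs in which at least one of the two animals fires at time $t+1$; and (c) independently retain each surviving type-$ij$ temporary pair as permanent with probability $p_{ij}$.

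Let $M'' - M$ denote the pair-type contingency table induced by the uniform shadow pairing, so that $M'' \in \mathcal{E}'$ with $M'' \ge M$. By a counting argument identical to that of Corollary~\ref{korcount} applied to the population of singles, the law of $M'' - M$ is multiple hypergeometric with row sums $x_i - m_{i,\cdot}$ and column sums $y_j - m_{\cdot,j}$. The Bernoulli-specific input enters next: for any prescribed type-$ij$ shadow pair, the probability that at least one of its two members fires at time $t+1$ equals
\[
1 - (1 - \alpha_i)(1 - \beta_j) = \alpha_i + \beta_j - \alpha_i\beta_j
\]
by independence of the underlying Bernoulli trials. Since distinct shadow pairs involve disjoint sets of animals and the mating Bernoullis are independent by construction, each shadow pair is retained as a permanent pair independently of all others, with success probability $\pi_{ij} = p_{ij}(\alpha_i + \beta_j - \alpha_i\beta_j)$ depending only on its type.

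Therefore, conditionally on the shadow table being $M'' - M$, the entries $(m'_{ij} - m_{ij})_{i,j}$ of the new pair-type increment are independent binomial random variables with parameters $m''_{ij} - m_{ij}$ and $\pi_{ij}$. In particular $m'_{ij} \le m''_{ij}$ for every $(i,j)$, which yields the constraint $M'' \ge M'$ in the sum. Averaging the product of binomial probabilities against the multiple hypergeometric law of $M''$ and then expanding the binomial coefficient
\[
\binom{m''_{ij} - m_{ij}}{m'_{ij} - m_{ij}} = \frac{(m''_{ij} - m_{ij})!}{(m'_{ij} - m_{ij})!\,(m''_{ij} - m'_{ij})!}
\]
cancels the factor $\prod_{i,j}(m''_{ij} - m_{ij})!$ in the multiple hypergeometric denominator against the numerators of the binomial coefficients, leaving precisely the formula~\eqref{tayland}.

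No conceptual obstacle is anticipated; the argument is essentially a combinatorial repackaging. The one step that genuinely requires care is the justification of the three-step decomposition and the independence across shadow pairs, which is exactly what the alternative representation of Subsection~\ref{alternativerepsection} was designed to provide. It is precisely here that the Bernoulli assumption is used: each animal's firing at a given integer time is a single independent Bernoulli trial, so the ``at least one fires'' events factor cleanly across the disjoint pairs of any shadow list, giving the binomial structure that drives the formula.
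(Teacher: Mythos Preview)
Your proposal is correct and follows essentially the same approach as the paper. The paper phrases your uniform ``shadow'' pairing as the terminal table $Q'(T)$ of an auxiliary definite-mating process and obtains the binomial structure by composing $Q'_{ij}(1)\mid Q'_{ij}(T)\sim B(Q'_{ij}(T),\lambda_{ij}(1))$ with the mating thinning $Q_{ij}(1)\mid Q'_{ij}(1)\sim B(Q'_{ij}(1),p_{ij})$, whereas you collapse these two thinnings into a single $\pi_{ij}$-coin per shadow pair; the underlying coupling and the resulting computation are identical.
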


\begin{proof}
By the Markov property, conditioning on $Q(t) = M$ is equivalent to taking $t=0$ and assuming that there are initially $x_i - m_{i,\cdot}$ single type-$i$ females and $y_j - m_{\cdot,j}$ single type-$j$ males for every $i,j\in\{1,\ldots,k\}$. We would like to find the distribution of $Q(1)$ under these assumptions. To this end, let $Q'(\cdot)$ be the pair-type process in the case of definite mating upon encounter.
As we have seen in the proof of Theorem \ref{kungfu}, $$Q'_{ij}(1)\,|\,Q'_{ij}(T)\sim B(Q'_{ij}(T), \lambda_{ij}(1)),$$ where $B$ denotes the binomial distribution and $$\lambda_{ij}(1) = F_i(1) + G_j(1) - F_i(1)G_j(1) = \alpha_i + \beta_j - \alpha_i\beta_j.$$
Now, recall that
$$Q_{ij}(1)\,|\,Q'_{ij}(1)\sim B(Q'_{ij}(1),p_{ij}).$$
Therefore, by thinning,
$$Q_{ij}(1)\,|\,Q'_{ij}(T)\sim B(Q'_{ij}(T),\pi_{ij}),$$ with $\pi_{ij} = p_{ij}\lambda_{ij}(1)$ as in (\ref{piber}), and (\ref{tayland}) is obtained by modifying the proof of Theorem \ref{kungfu} accordingly.
\end{proof}

\begin{remark}\label{degistirb}
It is evident from (\ref{tayland}) that the distribution of the pair-type process $Q(\cdot)$ depends on $P = (p_{ij})$, $\alpha_1,\ldots,\alpha_k$ and $\beta_1,\ldots,\beta_k$ only through the $k\times k$ matrix $\Pi := (\pi_{ij})$ whose entries are defined in (\ref{piber}). In other words, we have the freedom to change these parameters as long as $\Pi$ stays the same.
Therefore, under the assumption of Bernoulli firing times, the EM law of the species can be identified with $\Pi$.
\end{remark}

Since the pair-type process $Q(\cdot)$ is a time-homogeneous Markov chain with transition kernel $\rho$ as in (\ref{tayland}), the Kolmogorov backward equation for 
(\ref{tozert}) is
\begin{align}
&u_{ij}(t+1;x_1,\ldots,x_k;y_1,\ldots,y_k)\nonumber\\
&\quad = \sum_{M\in\mathcal{E}}\rho(0,M)\left[u_{ij}(t;x_1-m_{1,\cdot},\ldots,x_k-m_{k,\cdot};y_1-m_{\cdot,1},\ldots,y_k-m_{\cdot,k}) + m_{ij}\right].\label{yonetmp}
\end{align}
Note that (\ref{yonetmp}) is a closed system of recursive ordinary difference equations.

Similarly, using the strong Markov property, we can decompose (\ref{bayankut}) with respect to the possible values of $Q(t_1^*(\tau))$ and thereby obtain the following recursion:
\begin{align}
&u_{ij}^*(x_1,\ldots,x_k;y_1,\ldots,y_k)\nonumber\\
&\quad = \sum_{\substack{M\in\mathcal{E}:\\M\neq 0}}\frac{\rho(0,M)}{1 - \rho(0,0)}\left[u_{ij}^*(x_1-m_{1,\cdot},\ldots,x_k-m_{k,\cdot};y_1-m_{\cdot,1},\ldots,y_k-m_{\cdot,k}) + m_{ij}\right].\label{karaosmanp}
\end{align}

The recursive equations (\ref{yonetmp}) and (\ref{karaosmanp}) can be solved numerically, especially when $n$ is not too large. However, it is not clear if they can be solved analytically.

\subsection{Bernoulli fine balance and panmixia}

\begin{definition}\label{berfinedef}
The EM law $\Pi$ is said to satisfy the Bernoulli fine balance condition if
\begin{equation}\label{finep}
(1-\pi_{ij})(1-\pi_{i'j'}) = (1-\pi_{ij'})(1-\pi_{i'j})
\end{equation}
for every $i,j,i',j'\in\{1,\ldots,k\}$.
\end{definition}


\begin{proposition}\label{mulugp}
Assume (Ber1)--(Ber3). If the species is panmictic, then the EM law $\Pi$ satisfies the Bernoulli fine balance condition.
\end{proposition}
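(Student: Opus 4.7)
The plan is to mimic the proof of Proposition \ref{mulug}, using the $n=2$ population with one animal of each type as the test case, but working with the Bernoulli transition kernel from Proposition \ref{trafoggp} instead of the Poisson generator. First, observe that the Bernoulli fine balance condition (\ref{finep}) is trivially satisfied whenever $i=i'$ or $j=j'$, so it is enough to establish it when $i\neq i'$ and $j\neq j'$. By relabeling the female and male types, it suffices to prove $(1-\pi_{11})(1-\pi_{22}) = (1-\pi_{12})(1-\pi_{21})$ for $k=2$.

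Next, take the population with $k=2$ and $x_1=x_2=y_1=y_2=1$, so $n=2$. The state space $\mathcal{E}$ has seven elements: the zero matrix $0$, the four indicator matrices $I^{ij}$ ($i,j\in\{1,2\}$), and the two terminal contingency tables $I^{11}+I^{22}$ and $I^{12}+I^{21}$. The plan is to use formula (\ref{tayland}) from Proposition \ref{trafoggp} to compute the one-step transition probabilities $\rho(0,M)$ explicitly. A short calculation gives
\begin{align*}
\rho(0,0) &= \tfrac{1}{2}\bigl[(1-\pi_{11})(1-\pi_{22}) + (1-\pi_{12})(1-\pi_{21})\bigr],\\
\rho(0,I^{ij}) &= \tfrac{1}{2}\pi_{ij}(1-\pi_{i'j'}),\quad \{i',j'\}=\{1,2\}\setminus\{i,j\}\text{ in the obvious sense},\\
\rho(0,I^{11}+I^{22}) &= \tfrac{1}{2}\pi_{11}\pi_{22},\qquad \rho(0,I^{12}+I^{21}) = \tfrac{1}{2}\pi_{12}\pi_{21}.
\end{align*}

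Then I would plug these into the recursion (\ref{karaosmanp}) for $u^*_{11}(1,1;1,1)$. Only the states $M\in\{I^{11},I^{22},I^{11}+I^{22}\}$ contribute to the right-hand side, since in all other cases either $m_{11}=0$ and the residual population has $x_1=0$ or $y_1=0$ (forcing $u^*_{11}=0$), or the value $m_{11}$ itself is zero. Collecting terms, the numerator simplifies to $\tfrac{1}{2}\bigl[\pi_{11}+\pi_{22}-\pi_{11}\pi_{22}\bigr] = \tfrac{1}{2}\bigl[1 - (1-\pi_{11})(1-\pi_{22})\bigr]$, and the denominator $1-\rho(0,0)$ equals $\tfrac{1}{2}\bigl[(1-(1-\pi_{11})(1-\pi_{22})) + (1-(1-\pi_{12})(1-\pi_{21}))\bigr]$. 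Therefore
\[
u^*_{11}(1,1;1,1) \;=\; \frac{1-(1-\pi_{11})(1-\pi_{22})}{\bigl(1-(1-\pi_{11})(1-\pi_{22})\bigr)+\bigl(1-(1-\pi_{12})(1-\pi_{21})\bigr)}.
\]

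The panmixia hypothesis forces this ratio to equal $x_1y_1/n = 1/2$, which instantly yields $(1-\pi_{11})(1-\pi_{22}) = (1-\pi_{12})(1-\pi_{21})$, i.e.\ (\ref{finep}) in the case $i=j=1$, $i'=j'=2$. Relabeling female and male types then gives the full Bernoulli fine balance condition. I do not anticipate a real obstacle; the only mildly delicate step is the bookkeeping in the computation of $\rho(0,M)$ for each of the seven admissible matrices, but because $n=2$ the sums over $M''\in\mathcal{E}'$ in (\ref{tayland}) each contain at most two terms and the whole calculation is elementary.
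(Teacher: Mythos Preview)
Your proposal is correct and follows essentially the same approach as the paper: both use the $n=2$ population with $x_1=x_2=y_1=y_2=1$, compute $u^*_{11}$ via the recursion (\ref{karaosmanp}) with the transition kernel from Proposition \ref{trafoggp}, set it equal to $\tfrac12$, and relabel types to obtain the general case. Your computation of the seven values $\rho(0,M)$ is accurate, and your expression for $u^*_{11}(1,1;1,1)$ in terms of $(1-\pi_{ij})$-products is just an algebraic rewriting of the paper's $\dfrac{\pi_{11}+\pi_{22}-\pi_{11}\pi_{22}}{(\pi_{11}+\pi_{22}-\pi_{11}\pi_{22})+(\pi_{12}+\pi_{21}-\pi_{12}\pi_{21})}$.
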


\begin{proof}
Assume that the species is panmictic. Observe that (\ref{finep}) is trivial if $i=i'$ or $j=j'$. Take a population with $n=2$ and $x_1 = x_2 = y_1 = y_2 = 1$. Use recursion (\ref{karaosmanp}) to write
\begin{align*}
\frac1{2} &= \frac{x_1y_1}{n} = u^*_{11}(x_1,\ldots,x_k;y_1,\ldots,y_k)\\
&= \sum_{\substack{M\in\mathcal{E}:\\M\neq 0}}\frac{\rho(0,M)}{1 - \rho(0,0)}\left[u_{11}^*(x_1-m_{1,\cdot},\ldots,x_k-m_{k,\cdot};y_1-m_{\cdot,1},\ldots,y_k-m_{\cdot,k}) + m_{11}\right]\\
&= \frac{\frac1{2}\left(\pi_{11}(1 - \pi_{22})[0+1] +  (1 - \pi_{11})\pi_{22}[1+0] + \pi_{11}\pi_{22}[0+1]\right)}{\frac1{2}\left(\pi_{11}(1 - \pi_{22}) +  (1 - \pi_{11})\pi_{22} + \pi_{11}\pi_{22}\right) + \frac1{2}\left(\pi_{12}(1 - \pi_{21}) +  (1 - \pi_{12})\pi_{21} + \pi_{12}\pi_{21}\right)}\\
&= \frac{\pi_{11} + \pi_{22} - \pi_{11}\pi_{22}}{(\pi_{11} + \pi_{22} - \pi_{11}\pi_{22}) + (\pi_{12} + \pi_{21} - \pi_{12}\pi_{21})}.
\end{align*}
This follows from computing $\rho(0,M)$ for every
$$M\in\mathcal{E} = \left\{\left(\begin{array}{ll}  1 & 0 \\ 0 & 0 \end{array}\right), \left(\begin{array}{ll}  0 & 0 \\ 0 & 1 \end{array}\right), \left(\begin{array}{ll}  1 & 0 \\ 0 & 1 \end{array}\right), \left(\begin{array}{ll}  0 & 1 \\ 0 & 0 \end{array}\right), \left(\begin{array}{ll}  0 & 0 \\ 1 & 0 \end{array}\right), \left(\begin{array}{ll}  0 & 1 \\ 1 & 0 \end{array}\right), \left(\begin{array}{ll}  0 & 0 \\ 0 & 0 \end{array}\right)\right\}$$
and observing that only the first three of these matrices contribute to the sum above.
Therefore, $\pi_{11} + \pi_{22} - \pi_{11}\pi_{22} = \pi_{12} + \pi_{21} - \pi_{12}\pi_{21}$. This is (\ref{finep}) with $i=j=1$ and $i'=j'=2$. By relabeling the types, the same argument works for any $i,j,i',j'\in\{1,\ldots,k\}$ such that $i\neq i'$ and $j\neq j'$, and we are done.
\end{proof}


\begin{lemma}\label{propjoep}
The EM law $\Pi$ satisfies the Bernoulli fine balance condition if and only if there exist $\bar\alpha_1,\ldots,\bar\alpha_k,\bar\beta_1,\ldots,\bar\beta_k\in[0,1]$ such that
\begin{equation}\label{thanksp}
1 - \pi_{ij} = (1 - \bar\alpha_i)(1 - \bar\beta_j)
\end{equation}
for every $i,j\in\{1,\ldots,k\}$.
\end{lemma}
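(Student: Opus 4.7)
The plan is to mirror the proof of Lemma \ref{propjoe} with the natural multiplicative adjustments, since the Bernoulli fine balance condition (\ref{finep}) is multiplicative rather than additive. The forward direction is a one-line substitution: if (\ref{thanksp}) holds, then both sides of (\ref{finep}) reduce to $(1-\bar\alpha_i)(1-\bar\alpha_{i'})(1-\bar\beta_j)(1-\bar\beta_{j'})$, so the identity is automatic.

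For the converse, I will WLOG relabel both the female and male types so that $\pi_{11}$ is the overall minimum of $\Pi$. In the main case $\pi_{11} < 1$, set
$$\bar\alpha_i := \pi_{i1} \in (0,1] \qquad\text{and}\qquad \bar\beta_j := 1 - \frac{1-\pi_{1j}}{1-\pi_{11}}.$$
The minimality of $\pi_{11}$ gives $1 - \pi_{1j} \le 1 - \pi_{11}$, so $\bar\beta_j \in [0,1]$. Applying Bernoulli fine balance with indices $(i,j,1,1)$ yields
$$(1-\pi_{ij})(1-\pi_{11}) = (1-\pi_{i1})(1-\pi_{1j}),$$
which rearranges to $1 - \pi_{ij} = (1-\pi_{i1}) \cdot \dfrac{1-\pi_{1j}}{1-\pi_{11}} = (1-\bar\alpha_i)(1-\bar\beta_j)$, as required.

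The degenerate case $\pi_{11} = 1$ forces every $\pi_{ij} = 1$ (since $\pi_{11}$ was chosen as the overall minimum and $\pi_{ij} \le 1$ for all $i,j$), in which case setting $\bar\alpha_i = 1$ for every $i$ (with any $\bar\beta_j \in [0,1]$) makes both sides of (\ref{thanksp}) vanish.

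The only subtlety is the need to avoid dividing by $1 - \pi_{11}$ when $\pi_{11}=1$; this is why I choose $\pi_{11}$ as the overall minimum of $\Pi$ rather than only as the row minimum as in Lemma \ref{propjoe}. This cleanly isolates the degenerate all-ones case, so the construction above covers every possibility and no further bookkeeping is needed.
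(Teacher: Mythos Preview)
Your proof is correct and follows essentially the same approach as the paper's: both handle the forward direction by direct substitution, isolate the degenerate all-ones case, and in the main case set $\bar\alpha_i = \pi_{i1}$ and $\bar\beta_j = 1 - (1-\pi_{1j})/(1-\pi_{11})$, then invoke (\ref{finep}) with $i'=j'=1$. The only cosmetic difference is that you relabel so that $\pi_{11}$ is the overall minimum of $\Pi$, whereas the paper relabels so that $\pi_{11} = \min\{\pi_{11},\ldots,\pi_{1k}\} < 1$; either choice works and the rest of the argument is identical.
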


\begin{proof}
Assume that (\ref{thanksp}) indeed holds with some $\bar\alpha_1,\ldots,\bar\alpha_k,\bar\beta_1,\ldots,\bar\beta_k\in[0,1]$. Then,
\begin{align*}
(1-\pi_{ij})(1-\pi_{i'j'}) &= (1 - \bar\alpha_i)(1 - \bar\beta_j)(1 - \bar\alpha_{i'})(1 - \bar\beta_{j'})\\
& = (1 - \bar\alpha_i)(1 - \bar\beta_{j'})(1 - \bar\alpha_{i'})(1 - \bar\beta_j) = (1-\pi_{ij'})(1-\pi_{i'j})
\end{align*}
for every  $i,j,i',j'\in\{1,\ldots,k\}$, i.e., the Bernoulli fine balance condition is satisfied.

Conversely, assume that the Bernoulli fine balance condition holds. If $\pi_{ij}=1$ for every $i,j\in\{1,\ldots,k\}$, then (\ref{thanksp}) holds with $\bar\alpha_i = \bar\beta_j = 1$. Otherwise, assume without loss of generality (i.e., up to relabeling the female and male types) that $\pi_{11} = \min\{\pi_{11},\ldots,\pi_{1k}\} < 1$. Define $$\bar\alpha_i := \pi_{i1}\in(0,1]\quad\mbox{and}\quad \bar\beta_j := 1 - \frac{1-\pi_{1j}}{1 - \pi_{11}}\in[0,1].$$
With this notation, we have
$$1-\pi_{ij} = \frac{(1-\pi_{i1})(1-\pi_{1j})}{1 - \pi_{11}} = (1 - \bar\alpha_i)(1 - \bar\beta_j),$$
where the first equality follows from (\ref{finep}). This concludes the proof.
\end{proof}

\begin{theorem}\label{yumosoglup}
Assume (Ber1)--(Ber3). Take a population with $x_1,\ldots,x_k$ females and $y_1,\ldots,y_k$ males of types $1,\ldots,k$, respectively, such that (\ref{destur}) holds. If the EM law $\Pi$ satisfies the Bernoulli fine balance condition, then
\begin{equation}\label{cikolata1p}
\mathbb{P}\left(Q(t) = M\right) = \frac{\Big(\prod_{i} x_i!\Big)\left(\prod_{j} y_j!\right)}{n!\left(\prod_{i,j}m_{ij}!\right)}\sum_{\substack{M'\in\mathcal{E}':\\M'\ge M}}\prod_{i,j}\frac{\left(1 - (1 - \pi_{ij})^t\right)^{m_{ij}}\left((1 - \pi_{ij})^t\right)^{m'_{ij}-m_{ij}}}{(m'_{ij} - m_{ij})!}
\end{equation}
for every $t\in\mathbb{N}\cup\{0\}$ and $M\in\mathcal{E}$. Similarly,
\begin{align}
\mathbb{P}(Q(T) = M) &= \frac{\Big(\prod_{i} x_i!\Big)\left(\prod_{j} y_j!\right)}{n!\left(\prod_{i,j}m_{ij}!\right)}\quad\text{for $M\in\mathcal{E}'$},\label{cikolata3p}\\
u^*_{ij}(x_1,\ldots,x_k;y_1,\ldots,y_k) &= \frac{x_iy_j}{n},\quad\mbox{and}\label{cikolata4p}\\
u_{ij}(t;x_1,\ldots,x_k;y_1,\ldots,y_k) &= \frac{x_iy_j}{n}\left(1 - (1 - \pi_{ij})^t\right)\label{cikolata2p}.
\end{align}
\end{theorem}

\begin{proof}
If the EM law satisfies the Bernoulli fine balance condition, then Lemma \ref{propjoep} implies that (\ref{thanksp}) holds with some $\bar\alpha_1,\ldots,\bar\alpha_k,\bar\beta_1,\ldots,\bar\beta_k\in[0,1]$, which is equivalent to
$$\pi_{ij} = \bar\alpha_i + \bar\beta_j - \bar\alpha_i\bar\beta_j.$$
In the light of Remark \ref{degistirb}, as far as the distribution of the pair-type process $Q(\cdot)$ is concerned, we can change (if necessary) $P = (p_{ij})$, $\alpha_1,\ldots,\alpha_k$ and $\beta_1,\ldots,\beta_k$, and
assume that $$p_{ij} = 1,\quad\alpha_i = \bar\alpha_i\quad\mbox{and}\quad\beta_j = \bar\beta_j$$
for every $i,j\in\{1,\ldots,k\}$. But then, we have definite mating upon encounter and Theorem \ref{kungfu} is applicable. Since $\tau_1(\zeta)$ is geometrically distributed with success probability $\bar\alpha_i$ (resp.\ $\bar\beta_j$) if $\zeta$ is a type-$i$ female (resp.\ type-$j$ male), we have
$$\lambda_{ij}(t) = 1 - (1 - F_i(t))(1 - G_j(t)) = 1 - (1 - \bar\alpha_i)^t(1 - \bar\beta_j)^t = 1 - (1 - \pi_{ij})^t$$
and this implies (\ref{cikolata1p}). Similarly, Corollaries \ref{korcount}, \ref{korpan} and \ref{aikido} give (\ref{cikolata3p}), (\ref{cikolata4p}) and (\ref{cikolata2p}), respectively.
\end{proof}

\begin{corollary}\label{karaksp}
Assume (Ber1)--(Ber3). The species is panmictic if and only if the EM law $\Pi$ satisfies the Bernoulli fine balance condition.
\end{corollary}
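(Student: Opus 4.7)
The plan is to observe that this corollary is an immediate consequence of the two preceding results, in complete analogy with Corollary \ref{karaks} from the Poisson case. No new argument is needed beyond packaging the implications correctly.

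First I would note that Proposition \ref{mulugp} gives the forward direction: if the species is panmictic, then in particular the population with $n=2$ and $x_1=x_2=y_1=y_2=1$ is panmictic, and the explicit computation performed there via the recursion (\ref{karaosmanp}) forces the identity $\pi_{11}+\pi_{22}-\pi_{11}\pi_{22} = \pi_{12}+\pi_{21}-\pi_{12}\pi_{21}$, which is precisely (\ref{finep}) with $i=j=1$, $i'=j'=2$. Relabeling types then yields (\ref{finep}) for all $i,j,i',j'\in\{1,\ldots,k\}$, i.e., the Bernoulli fine balance condition holds.

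For the converse, I would invoke Theorem \ref{yumosoglup}: if $\Pi$ satisfies the Bernoulli fine balance condition, then equation (\ref{cikolata4p}) states that
\[
u_{ij}^*(x_1,\ldots,x_k;y_1,\ldots,y_k) = \frac{x_iy_j}{n}
\]
for every $i,j\in\{1,\ldots,k\}$ and every admissible population, which is exactly the definition of panmixia for the species.

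There is no real obstacle here; the substantive work has already been done in Proposition \ref{mulugp} (where the $n=2$ test case is analyzed) and in Theorem \ref{yumosoglup} (where the change-of-parameters reduction to definite mating via Lemma \ref{propjoep} lets us transfer Corollary \ref{korpan} to the present setting). The proof of the corollary itself is therefore a one-line citation of these two facts.
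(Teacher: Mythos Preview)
Your proposal is correct and matches the paper's own proof exactly: the paper simply writes ``This is immediate from Proposition \ref{mulugp} and Theorem \ref{yumosoglup},'' and you have spelled out precisely how those two results supply the two implications.
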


\begin{proof}
This is immediate from Proposition \ref{mulugp} and Theorem \ref{yumosoglup}.
\end{proof}

\subsection{Full characterization of panmixia, homogamy and heterogamy in the $2\times 2$ case}\label{bercharsec}

\begin{lemma}\label{propjekk}
For $k=2$, the EM law $\Pi$ satisfies $(1-\pi_{11})(1-\pi_{22}) > (1-\pi_{12})(1-\pi_{21})$ if and only if there exist $\bar\alpha_1,\bar\alpha_2,\bar\beta_1,\bar\beta_2\in[0,1]$ such that
\begin{align}
1-\pi_{11} &> (1 - \bar\alpha_1)(1 - \bar\beta_1),\quad 1 - \pi_{12} = (1 - \bar\alpha_1)(1 - \bar\beta_2),\label{thankkk}\\
1-\pi_{21} &= (1 - \bar\alpha_2)(1 - \bar\beta_1)\quad\mbox{and}\quad 1 - \pi_{22} > (1 - \bar\alpha_2)(1 - \bar\beta_2).\nonumber
\end{align}
\end{lemma}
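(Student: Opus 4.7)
The plan is to model the proof on the Poisson analogue Lemma \ref{propjeff}, reducing everything to the already-established multiplicative factorization Lemma \ref{propjoep}.

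For the easy direction ($\Leftarrow$), I would simply compute: if the four relations in (\ref{thankkk}) hold, then
\[
(1-\pi_{11})(1-\pi_{22}) > (1-\bar\alpha_1)(1-\bar\beta_1)(1-\bar\alpha_2)(1-\bar\beta_2) = (1-\bar\alpha_1)(1-\bar\beta_2)(1-\bar\alpha_2)(1-\bar\beta_1) = (1-\pi_{12})(1-\pi_{21}),
\]
where the middle equality is simply the commutativity of multiplication in $\mathbb R$. Note the product of the two strict inequalities is strict because both factors $(1-\bar\alpha_i)(1-\bar\beta_i)$ are nonnegative and $(1-\pi_{ii})$ exceeds them.

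For the main direction ($\Rightarrow$), I would perturb $\Pi$ into an auxiliary matrix $\bar\Pi = (\bar\pi_{ij})$ satisfying the Bernoulli fine balance condition and then invoke Lemma \ref{propjoep}. First I would note that the hypothesis $(1-\pi_{11})(1-\pi_{22}) > (1-\pi_{12})(1-\pi_{21}) \ge 0$ forces $\pi_{11} < 1$ and $\pi_{22} < 1$ (otherwise the left side would vanish). This gives me genuine room to increase $\pi_{11}$ and $\pi_{22}$. I set $\bar\pi_{12} := \pi_{12}$ and $\bar\pi_{21} := \pi_{21}$, and I pick $\bar\pi_{11} \in (\pi_{11}, 1]$ and $\bar\pi_{22} \in (\pi_{22}, 1]$ such that
\[
(1-\bar\pi_{11})(1-\bar\pi_{22}) = (1-\bar\pi_{12})(1-\bar\pi_{21}).
\]
Such a choice exists by continuity: the map $(\bar\pi_{11},\bar\pi_{22}) \mapsto (1-\bar\pi_{11})(1-\bar\pi_{22})$ takes the value $(1-\pi_{11})(1-\pi_{22})$ at $(\pi_{11},\pi_{22})$, which strictly exceeds the right side, and it equals $0$ when either coordinate is $1$.

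Then $\bar\Pi$ satisfies the (nontrivial cases of the) Bernoulli fine balance condition, and Lemma \ref{propjoep} applied to $\bar\Pi$ yields $\bar\alpha_1,\bar\alpha_2,\bar\beta_1,\bar\beta_2 \in [0,1]$ with $1 - \bar\pi_{ij} = (1-\bar\alpha_i)(1-\bar\beta_j)$ for all $i,j$. The two identities in (\ref{thankkk}) then follow immediately from $\bar\pi_{12} = \pi_{12}$ and $\bar\pi_{21} = \pi_{21}$, and the two strict inequalities follow from $1 - \pi_{ii} > 1 - \bar\pi_{ii} = (1-\bar\alpha_i)(1-\bar\beta_i)$ for $i=1,2$.

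No serious obstacle is anticipated; the only subtlety is checking that the auxiliary $\bar\Pi$ can be chosen inside the admissible range $(0,1]^{2\times 2}$, which is handled by the observation above that strict inequality forces $\pi_{11}, \pi_{22} < 1$.
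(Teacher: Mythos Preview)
Your proposal is correct and follows essentially the same approach as the paper's proof: both directions are handled identically, by direct computation for ($\Leftarrow$) and by perturbing $\Pi$ to a matrix $\bar\Pi$ satisfying Bernoulli fine balance and then invoking Lemma~\ref{propjoep} for ($\Rightarrow$). You actually supply a bit more detail than the paper does, namely the observation that the hypothesis forces $\pi_{11},\pi_{22}<1$ and the continuity argument guaranteeing the existence of $\bar\pi_{11},\bar\pi_{22}$, both of which the paper leaves implicit.
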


\begin{proof}
Assume that (\ref{thankkk}) indeed holds with some $\bar\alpha_1,\bar\alpha_2,\bar\beta_1,\bar\beta_2\in[0,1]$. Then,
\begin{align*}
(1-\pi_{11})(1-\pi_{22}) &> (1 - \bar\alpha_1)(1 - \bar\beta_1)(1 - \bar\alpha_2)(1 - \bar\beta_2)\\
&= (1 - \bar\alpha_1)(1 - \bar\beta_2)(1 - \bar\alpha_2)(1 - \bar\beta_1) = (1-\pi_{12})(1-\pi_{21}).
\end{align*}

Conversely, if $(1-\pi_{11})(1-\pi_{22}) > (1-\pi_{12})(1-\pi_{21})$, then pick $\bar\Pi = (\bar\pi_{ij})$ such that
$$\pi_{11} < \bar\pi_{11},\quad\pi_{12} = \bar\pi_{12},\quad\pi_{21} = \bar\pi_{21},\quad\pi_{22} < \bar\pi_{22}\quad\mbox{and}\quad(1-\bar\pi_{11})(1-\bar\pi_{22}) = (1-\bar\pi_{12})(1-\bar\pi_{21}).$$
The desired result follows from Lemma \ref{propjoep} applied to $\bar\Pi$.
\end{proof}

\begin{theorem}\label{fullbull}
Assume (Ber1)--(Ber3). For $k=2$, the species is
\begin{itemize}
\item [(i)] heterogamous if\quad $(1-\pi_{11})(1-\pi_{22}) > (1-\pi_{12})(1-\pi_{21})$,
\item [(ii)] panmictic if\qquad\ \;$(1-\pi_{11})(1-\pi_{22}) = (1-\pi_{12})(1-\pi_{21})$, and
\item [(iii)] homogamous if\quad\ $(1-\pi_{11})(1-\pi_{22}) < (1-\pi_{12})(1-\pi_{21})$.
\end{itemize}
\end{theorem}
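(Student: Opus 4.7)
The plan is to follow the proof of Theorem \ref{fullpull} essentially verbatim, replacing each ingredient by its Bernoulli counterpart; Sections \ref{poissonsection} and \ref{bernoullisection} are deliberately arranged so that this transfer is mechanical. Part (ii) is simply Corollary \ref{karaksp}, which equates panmixia with the Bernoulli fine balance condition, so nothing extra is needed there.

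For part (i), I would assume $(1-\pi_{11})(1-\pi_{22}) > (1-\pi_{12})(1-\pi_{21})$ and invoke Lemma \ref{propjekk} to produce constants $\bar\alpha_1,\bar\alpha_2,\bar\beta_1,\bar\beta_2 \in [0,1]$ satisfying (\ref{thankkk}). I would then interpret these relations as saying that the original $\Pi$-matrix can be reproduced by setting $\alpha_i := \bar\alpha_i$, $\beta_j := \bar\beta_j$, $p_{12} = p_{21} := 1$, and choosing $p_{11}, p_{22} \in (0,1)$ so that $\pi_{ii} = p_{ii}(\bar\alpha_i + \bar\beta_i - \bar\alpha_i\bar\beta_i)$; the strict inequalities in (\ref{thankkk}) guarantee that such $p_{11}, p_{22} < 1$ exist, and positivity of the factors in front is ensured by (Ber3). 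Remark \ref{degistirb} guarantees that this re-parametrization leaves the law of the pair-type process $Q(\cdot)$, and in particular $u^*_{11}$, unchanged. Under the new parameters there is definite mating upon encounter precisely for mixed-type temporary pairs, so Theorem \ref{aaolurmu} applies and yields $u^*_{11}(x_1,x_2;y_1,y_2) < x_1 y_1 / n$ whenever $x_1 x_2 y_1 y_2 \ne 0$. By Definition \ref{homhet}, this is heterogamy.

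Part (iii) I would deduce from part (i) by relabeling the male types. Swapping labels $1$ and $2$ sends $\Pi$ to $\Pi'$ with $\pi'_{i1} = \pi_{i2}$ and $\pi'_{i2} = \pi_{i1}$, so the hypothesis $(1-\pi_{11})(1-\pi_{22}) < (1-\pi_{12})(1-\pi_{21})$ becomes the hypothesis of part (i) for the relabeled species. Applying part (i) there yields $u^{*\prime}_{11} < x_1 y'_1 / n$, which in original notation reads $u^*_{12}(x_1,x_2;y_1,y_2) < x_1 y_2 / n$. Since every type-$1$ female almost surely mates by the terminal time $T$, we have $u^*_{11} + u^*_{12} = x_1$, and hence $u^*_{11} > x_1 - x_1 y_2 / n = x_1 y_1 / n$, which is homogamy.

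I do not foresee any serious obstacle: the substance of the argument is already distilled into Lemma \ref{propjekk}, Remark \ref{degistirb}, and Theorem \ref{aaolurmu}, and what remains is purely bookkeeping. The only point that requires a moment of care is verifying that the required $p_{11}, p_{22}$ indeed lie in $(0,1)$, which follows immediately from the strict inequalities in (\ref{thankkk}) together with the positivity of $\bar\alpha_i + \bar\beta_i - \bar\alpha_i\bar\beta_i$ coming from the construction in the proof of Lemma \ref{propjoep}.
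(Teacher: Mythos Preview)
Your proposal is correct and follows the paper's proof essentially verbatim: part (ii) via Corollary \ref{karaksp}, part (i) via Lemma \ref{propjekk}, the change-of-parameters from Remark \ref{degistirb}, and Theorem \ref{aaolurmu}, and part (iii) by relabeling male types. You spell out the relabeling argument and the verification that $p_{11},p_{22}\in(0,1)$ in more detail than the paper does, but the approach is identical; the one minor slip is attributing the positivity of $\bar\alpha_i+\bar\beta_i-\bar\alpha_i\bar\beta_i$ to (Ber3) (which concerns the original parameters), though you correctly locate the real source---the construction in Lemma \ref{propjoep}---in your final sentence.
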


\begin{proof}
Let us prove part (i). If $(1-\pi_{11})(1-\pi_{22}) > (1-\pi_{12})(1-\pi_{21})$, then Lemma \ref{propjekk} implies that (\ref{thankkk}) holds with some $\bar\alpha_1,\bar\alpha_2,\bar\beta_1,\bar\beta_2\in[0,1]$. In the light of Remark \ref{degistirb}, as far as the distribution of the pair-type process $Q(\cdot)$ is concerned, we can change (if necessary) $P = (p_{ij})$, $\alpha_1,\alpha_2$ and $\beta_1,\beta_2$, and
assume that $$p_{11} < 1,\quad p_{12} = 1,\quad p_{21} = 1,\quad p_{22} < 1,\quad\alpha_i = \bar\alpha_i\quad\mbox{and}\quad\beta_j = \bar\beta_j$$
for every $i,j\in\{1,2\}$. But then, we have definite mating upon encounter for mixed-type temporary pairs only, and Theorem \ref{aaolurmu} is applicable. This concludes the proof of part (i). We have already shown part (ii) in Corollary \ref{karaksp}. Finally, part (iii) follows from part (i) by relabeling the male types.
\end{proof}

\section{Some concluding remarks, observations and open problems}\label{conicsection}

\subsubsection*{Main features of the model}

Gimelfarb \cite[p.\ 873]{Gim88a} says: ``Comparing first the applicability of the two encounter-mating models to real populations, it would seem that model 1 (individual encounters) is better suited for populations in which individuals are dispersed and search for potential mates individually, whereas model 2 (mass encounters) is better for populations in which individuals aggregate in search for potential mates, for example, mating swarms or leks. It is also possible that in some populations more than one encounter may take place at a time, and yet not all of the individuals available for mating participate in encounters simultaneously; thus, neither of the two models will describe such a population correctly." The SEM model unifies and generalizes the individual and mass EM models, and thereby removes the aforesaid limitation regarding their scope. Moreover, by tuning the parameters $\alpha_1,\ldots,\alpha_k$ and $\beta_1,\ldots,\beta_k$ in the cases of Poisson and Bernoulli firing times, possible differences in the vigor of animals searching for mates can be incorporated, cf.\ \cite[p. 870]{Gim88a}. However, it still remains to add other important features to the SEM model such as separation, births, deaths and offsprings in order to study the evolutionary aspects of mating behavior.

\subsubsection*{The key ideas}

We would like to recapitulate that the following observations have been of fundamental importance throughout this paper.
\begin{itemize}
\item [(i)] In the case of definite mating upon encounter, $\mathcal{L}(\cdot)$ is measurable with respect to $\mathcal{L}(T)$ and $\tau = (\tau_s(\zeta))_{s\ge1,\zeta\in Z}$ which are independent (see Theorem \ref{bekirog} and display (\ref{kompak})).
\item [(ii)] In the Poisson and Bernoulli cases, we have the freedom to change $P$, $\alpha_1,\ldots,\alpha_k$ and $\beta_1,\ldots,\beta_k$ as long as $\Pi$ stays the same (see Remarks \ref{degistira} and \ref{degistirb}).
\end{itemize}
Indeed, the first one provides us with a convenient way of decomposing $\mathcal{L}(\cdot)$ into independent components, and the second one (which we have been referring to as the change-of-parameters technique) allows us to reduce the model to definite mating upon encounter if the corresponding fine balance condition is satisfied.

%
%



%
%

\subsubsection*{On generic EM laws}

For Poisson firing times, the expectations $u_{ij}(t;x_1,\ldots,x_k;y_1,\ldots,y_k)$ and $u^*_{ij}(x_1,\ldots,x_k;y_1,\ldots,y_k)$, which were defined in (\ref{tozert}) and (\ref{bayankut}), solve the recursive equations (\ref{yonetm}) and (\ref{karaosman}), respectively. When the EM law $\Pi$ satisfies the Poisson fine balance condition, Theorem \ref{yumosoglu} provides formulas for (\ref{tozert}) and (\ref{bayankut}). Similarly, in the Bernoulli case, (\ref{tozert}) and (\ref{bayankut}) solve the recursive equations (\ref{yonetmp}) and (\ref{karaosmanp}), respectively, and Theorem \ref{yumosoglup} provides formulas for them when $\Pi$ satisfies the Bernoulli fine balance condition. One can also verify all of these statements by elementary combinatorics.

In either the Poisson or the Bernoulli case, if the corresponding fine balance condition is not satisfied, then we do not have formulas for (\ref{tozert}) and (\ref{bayankut}), i.e., it is not known whether the aforementioned recursive equations can be solved. These constitute interesting but rather difficult open problems. Alternatively, one can try to formulate and prove categorical results regarding (\ref{tozert}) and (\ref{bayankut}), such as Theorems \ref{fullpull} and \ref{fullbull} in which we use $\Pi$ to characterize heterogamy/panmixia/homogamy for $k=2$. It remains to figure out how these results can be generalized to $k\ge3$.

\subsubsection*{Three interesting observations regarding mating preferences}

First, as we have emphasized in Section \ref{literatur}, the terms panmixia, homogamy and heterogamy refer to the expected mating pattern and not to the mating preferences. Indeed, in the $k=2$ case with Poisson or Bernoulli firing times, homogamy does not imply that (say) type-$1$ females prefer mating with type-$1$ males. For example, take $$0 < p_{11} < p_{12} = p_{21} \le \frac1{2} < p_{22} = 1$$
with $\alpha_1 = \alpha_2 = 0$ and $\beta_1 = \beta_2 = 1$. Then, 
$$\pi_{11} + \pi_{22} > \pi_{12} + \pi_{21}\quad\mbox{and}\quad (1- \pi_{11})(1- \pi_{22}) < (1- \pi_{12})(1- \pi_{21}),$$
and we have homogamy both for Poisson and Bernoulli firing times by Theorems \ref{fullpull} and \ref{fullbull}. 

Second, in the $k=2$ case, there exist mating preference matrices that give heterogamy for Poisson firing times and homogamy for Bernoulli firing times (or vice versa). For example, suppose
$$p_{12} = p_{21} = \frac1{2} < \frac{3}{4} \le p_{22} < p_{11} + p_{22} < 1$$
with $\alpha_1 = \alpha_2 = 0$ and $\beta_1 = \beta_2 = 1$.
Then, $$\pi_{11} + \pi_{22} < \pi_{12} + \pi_{21}\quad\mbox{and}\quad (1- \pi_{11})(1- \pi_{22}) < (1- \pi_{12})(1- \pi_{21}),$$
and the claim follows from Theorems \ref{fullpull} and \ref{fullbull}.

Third, let us assume that there exists $c\in(0,1)$ such that $p_{ij} = c$ for every $i,j\in\{1,\ldots,k\}$. In words, animals have uniform preferences over types. This case is rather similar to definite mating upon encounter, i.e., when $c=1$. Therefore, one might predict that panmixia holds. This prediction is indeed true in the Poisson case with arbitrary $\alpha_1,\ldots,\alpha_k$ and $\beta_1,\ldots,\beta_k$ since we get $$\pi_{ij} = c(\alpha_i + \beta_j)$$
which satisfies the Poisson fine balance condition and Corollary \ref{karaks} applies. However, in the Bernoulli case with arbitrary $\alpha_1,\ldots,\alpha_k$ and $\beta_1,\ldots,\beta_k$, we get $$\pi_{ij} = c(\alpha_i + \beta_j - \alpha_i\beta_j) = c[1 - (1 - \alpha_i)(1 - \beta_j)].$$
It is easy to check that this EM law satisfies the Bernoulli fine balance condition if and only if $(\alpha_i - \alpha_{i'})(\beta_j - \beta_{j'}) = 0$ for every $i,j,i',j'\in\{1,\ldots,k\}$. Therefore, by Corollary \ref{karaksp}, panmixia is equivalent to having $\alpha_1 = \alpha_2 = \cdots = \alpha_k$ or $\beta_1 = \beta_2 = \cdots = \beta_k$. Moreover, in the $k=2$ case, it follows from Theorem \ref{fullbull} that there is homogamy when $(\alpha_1 - \alpha_2)(\beta_1 - \beta_2) < 0$ and heterogamy when $(\alpha_1 - \alpha_2)(\beta_1 - \beta_2) > 0$.

\subsubsection*{Asymptotics of the model}

As we have mentioned in Section \ref{literatur}, Gimelfarb carries out his analysis of the individual and mass EM models in \cite{Gim88a} after replacing all quantities such as $Q(T)$ with their expectations, and says that this approximation is justified by the law of large numbers when the population is large. Therefore, it is natural to ask if the law of large numbers indeed holds for the SEM model as $x_1,\ldots,x_k$ and $y_1,\ldots,y_k$ with (\ref{destur}) go to infinity in a suitable way. 
In a recent paper \cite{GunYil14b}, we establish the functional version of this limit theorem in the case of Poisson firing times, and describe the asymptotic mean of $Q(\cdot)$ as the solution of an ordinary differential equation system which we turn into Lotka-Volterra and replicator equations from population dynamics. We pay particular attention to the $k = 2$ case with $\pi_{12} = \pi_{21}$ and $x_1 = y_1$, and provide an explicit formula for the asymptotic (normalized) mating pattern without any fine balance assumption. We intend to do the same for Bernoulli firing times in the near future.

\section*{Acknowledgments}

We are indebted to A.\ Courtiol for introducing us to Gimelfarb's work on encounter-mating and for suggesting interesting problems. We also thank F.\ Rezakhanlou and P.\ Diaconis for valuable comments and discussions. O.\ G\"un gratefully acknowledges support by DFG SPP Priority Programme 1590 ``Probabilistic Structures in Evolution". A.\ Yilmaz is supported in part by European Union FP7 Marie Curie Career Integration Grant no.\ 322078.

\bibliographystyle{plain}
\bibliography{mating_references}

\begin{thebibliography}{10}

\bibitem{Alp99}
S.~Alpern and D.~Reyniers.
\newblock Strategic mating with homotypic preferences.
\newblock {\em J. Theoret. Biol.}, 198(1):71--88, 1999.

\bibitem{Alp05}
S.~Alpern and D.~Reyniers.
\newblock Strategic mating with common preferences.
\newblock {\em J. Theoret. Biol.}, 237(4):337--354, 2005.

\bibitem{And94}
M.~Andersson.
\newblock {\em Sexual selection}, volume~27 of {\em Monographs in Behavior and
  Ecology}.
\newblock Princeton University Press, second edition, 1994.

\bibitem{Ber00}
C.~T. Bergstrom and L.~A. Real.
\newblock Towards a theory of mutual mate choice: Lessons from two-sided
  matching.
\newblock {\em Evol. Ecol. Res.}, 2:493--508, 2000.

\bibitem{Alex2}
A.~Courtiol, J.~E. Pettay, M.~Jokela, A.~Rotkirch, and V.~Lummaa.
\newblock Natural and sexual selection in a monogamous historical human
  population.
\newblock {\em Proceedings of the National Academy of Sciences},
  109(21):8044--8049, 2012.

\bibitem{Dav02}
C.~S. Davis.
\newblock {\em Statistical methods for the analysis of repeated measurements}.
\newblock Springer Texts in Statistics. Springer-Verlag, New York, 2002.

\bibitem{DieHad88}
K.~Dietz and K.~P. Hadeler.
\newblock Epidemiological models for sexually transmitted diseases.
\newblock {\em J. Math. Biol.}, 26(1):1--25, 1988.

\bibitem{Eth08}
A.~M. Etheridge and S.~Lemaire.
\newblock Diffusion approximation of a multilocus model with assortative
  mating.
\newblock {\em Electron. J. Probab.}, 16:no. 78, 2122--2181, 2011.

\bibitem{Alex1}
L.~Etienne, F.~Rousset, B.~Godelle, and A.~Courtiol.
\newblock How choosy should {I} be? {T}he relative searching time predicts
  evolution of choosiness under direct sexual selection.
\newblock {\em Proceedings of the Royal Society B: Biological Sciences},
  281(1785), 2014.

\bibitem{Ewe04}
W.~J. Ewens.
\newblock {\em Mathematical population genetics. {I}}, volume~27 of {\em
  Interdisciplinary Applied Mathematics}.
\newblock Springer-Verlag, New York, second edition, 2004.
\newblock Theoretical introduction.

\bibitem{Gil98}
J.~H. Gillespie.
\newblock {\em Population genetics. A Concise Guide}.
\newblock The Johns Hopkins University Press, 1998.

\bibitem{Gim88b}
A.~Gimelfarb.
\newblock Processes of pair formation leading to assortative mating in
  biological populations: dynamic interaction model.
\newblock {\em Theoret. Population Biol.}, 34(1):1--23, 1988.

\bibitem{Gim88a}
A.~Gimelfarb.
\newblock {Processes of pair formation leading to assortative mating in
  biological populations: encounter-mating model}.
\newblock {\em {American Naturalist}}, {131}({6}):865--884, {1988}.

\bibitem{GunYil14b}
O.~G{\"u}n and A.~Yilmaz.
\newblock Fluid limit for the {P}oisson encounter-mating model.
\newblock {P}reprint, available at arXiv:1411.7220 [math.PR].

\bibitem{Had12}
K.~P. Hadeler.
\newblock Pair formation.
\newblock {\em J. Math. Biol.}, 64(4):613--645, 2012.

\bibitem{Der92}
P.~G. Higgs and B.~Derrida.
\newblock Genetic distance and species formation in evolving populations.
\newblock {\em Journal of Molecular Evolution}, 35(5):454--465, 1992.

\bibitem{JiaBolKir13}
Y.~Jiang, D.~I. Bolnick, and M.~Kirkpatrick.
\newblock {Assortative Mating in Animals}.
\newblock {\em American Naturalist}, {181}({6}):{E125--E138}, {2013}.

\bibitem{Kir82}
M.~Kirkpatrick.
\newblock Sexual selection and the evolution of female choice.
\newblock {\em Evolution}, 36(1):1--12, 1982.

\bibitem{Lan81}
R.~Lande.
\newblock Models of speciation by sexual selection on polygenic traits.
\newblock {\em Proc. Nat. Acad. Sci. U.S.A.}, 78(6, part 2):3721--3725, 1981.

\bibitem{Mod62}
C.~J. Mode.
\newblock Some multi-dimensional birth and death processes and their
  applications in population genetics.
\newblock {\em Biometrics}, {18}({4}):543--567, {1962}.

\bibitem{Mos68}
F.~Mosteller.
\newblock Association and estimation in contingency tables.
\newblock {\em J. Amer. Statist. Assoc.}, 63:1--28, 1968.

\bibitem{Rom71}
A.~K. Romney.
\newblock Measuring endogamy.
\newblock In {\em Explorations in Mathematical Anthropology, ed. P. Kay}, pages
  191--213. Cambridge, 1971.

\bibitem{Tay75}
C.~E. Taylor.
\newblock {Differences in mating propensities: some models for examining the
  genetic consequences}.
\newblock {\em {Behavior Genetics}}, {5}({4}):381--393, {1975}.

\end{thebibliography}

\end{document}